\newcommand{\beq}{\begin{equation}}
\newcommand{\enq}{\end{equation}}
\newcommand{\bbR}{{\mathbb{R}}}
\newcommand{\cE}{{\mathcal E}}
\newcommand{\cL}{{\mathcal L}}
\newcommand{\no}{\nonumber}
\newcommand{\lb}{\label}
\newcommand{\f}{\frac}
\newcommand{\be}{{\mathbf e}}
\newcommand{\bu}{{\mathbf u}}
\newcommand{\bv}{{\mathbf v}}
\newcommand{\bx}{{\mathbf x}}
\numberwithin{equation}{section}
\renewcommand{\det}{\operatorname{det}}
\newcommand{\dom}{\operatorname{dom}}
\renewcommand{\Re}{\operatorname{Re }}
\newcommand{\diag}{\operatorname{diag}}
\newtheorem{theorem}{Theorem}[section]
\newtheorem{proposition}[theorem]{Proposition}
\newtheorem{lemma}[theorem]{Lemma}
\newtheorem{corollary}[theorem]{Corollary}
\newtheorem{definition}[theorem]{Definition}
\newtheorem{hypothesis}[theorem]{Hypothesis}
\theoremstyle{remark}
\newtheorem{remark}[theorem]{Remark}
\theoremstyle{definition}
\theoremstyle{remark}
\begin{document}

\title[Steadystates]{Stability of the steady states in  multidimensional reaction diffusion systems arising in combustion theory}


\author{Xinyao Yang}
\address{Department of Applied Mathematics, Xi'an Jiaotong - Liverpool University, Suzhou, China}
\email{xinyao.yang@xjtlu.edu.cn}

\author{Qingxia Li}
\address{Department of Mathematics and Computer Science,
	Fisk University, Nashiville, TN, USA}
\email{qli@fisk.edu}


\subjclass[2010]{35B35, 35K57}

\date{today}


\begin{abstract}
We prove that the steady state of a class of multidimensional reaction-diffusion systems is asymptotically stable at the intersection of unweighted space and exponentially weighted Sobolev spaces, and pay particular attention to a special case, namely, systems of equations that arise in combustion theory. The steady-state solutions considered here are the end states of the traveling fronts associated with the systems, and thus the present results complement recent papers \cite{GLS1, GLS2, GLS3, GLSR, GLY} that study the stability of traveling fronts.
\end{abstract}

\maketitle
\section{Introduction}\lb{sec1}
Traveling waves are a type of waves that maintain a certain shape while propagating at a fixed speed in a medium, and are widely present in a variety of natural phenomena modeled by nonlinear evolutionary equations that often describe chemical or physical processes shifting from one equilibrium state to another. In particular, the study of the stability of traveling waves is a burning topic in the natural sciences and engineering, the development of which involves many different methods and techniques, including weighted norms, graphical transformations, Lyapunov-Perron integrals of evolution equations, or corresponding sums of discrete systems. More extensive discussion of the use of different approaches to the study of such research can be found in \cite{BGM,BFZ,CL,Fife2002,Henry1981,kp,KV,liwu,pw94,Sa02,TZKS,vv02,Xin2000,rott3} and in their literature lists.

For the stability of travelling waves, we usually refer to an orbital stability in which any solution approaching the travelling wave is attracted to the solution itself or to one of its translations. In physical and mathematical sense, the instability of traveling waves can be understood as the sensitivity to perturbations, i.e., the possibility that the propagation of a traveling wave is distorted or altered from the system state due to a perturbation, eventually leading to abnormal appearance or abnormal steady-state output. A more precise definition can be found in \cite{Sa02}. B. Sandstede and A. Scheel \cite{SS01} have analysed various instability mechanisms in reaction-diffusion systems, their classification is based on the type of spectrum on the imaginary axis of the linear operator from linearization of the system. If only the isolated eigenvalues of the linear operator cross the imaginary axis, then the problem can be analyzed by the Lyapunov-Schmidt reduction, or by the theory of central manifolds. Whereas the essential instability \cite{SS01} arises when the essential spectrum (defined as consisting of all points on the spectrum that are not isolated eigenvalues of finite multiplicity \cite[Chapter 5]{Henry1981}) crosses the imaginary axis. The central manifold theorem or the Lyapunov-Schmidt method no longer applies in this case, and the proof of nonlinear stability is based on the use of exponential weights for the essential spectrum (see, e.g., \cite{pw94}), and on renormalization techniques to show that the nonlinear terms are asymptotically independent compared to linear diffusion.

The purpose of using an exponentially weighted space is that this allows one to shift the essential spectrum, which would otherwise cross the imaginary axis, to the left half of the complex plane, so that the exponentially decaying properties of the associated semigroup may be used. An application of this approach to the reaction diffusion equations can be found in a series of papers \cite{G,GLS1,GLS2,GLS3,GLSR} which demonstrate the orbital stability of the traveling front by studying perturbations that are small in both unweighted and weighted Sobolev spaces. Subsequently, in \cite{LSY}, the authors established the existence of a stable foliation near the traveling front solution of the reaction diffusion system in one-dimensional space, i.e., the existence of a central manifold at each point on the front solution that would attract nearby solutions that are slightly perturbed to the front solution itself or one of its translations, a result that complements the orbit stability results of \cite{GLS2}. Recently, we \cite{GLY} have extended the orbital stability result for the traveling front solution of the reaction-diffusion system associated with the combustion model in \cite{GLS2} to a multidimensional space. However, the result in \cite{GLY} is formulated under the assumption that the diffusion coefficients of the variables are the same throughout the system, although this assumption often does not satisfy the characteristics of the reaction-diffusion system as we perceive it in reality, for example, the diffusion coefficients of the reactants are usually smaller than the diffusion coefficients of the temperature. Therefore, this assumption renders our conclusions inapplicable for a significant number of reaction-diffusion systems. 

In this paper the objective is to study the stability of steady-state solutions to a class of reaction-diffusion systems associated with combustion problems in multidimensional space. We will first consider the following system of general reaction diffusion equations:
\begin{equation}\label{GEQ}
	\mathbf{u}_{t}(t, \mathbf{x})=D \Delta_{\mathbf{x}} \mathbf{u}(t, \mathbf{x})+f(\mathbf{u}(t, \mathbf{x})), \mathbf{u} \in \mathbb{R}^{n},  d \geq 2, t\geq 0, n\geq 2,
\end{equation}
where $D=\diag (d_1,d_2,... , d_n)$ and $\bx=(x_1,x_2,\ldots,x_d) \in \mathbb{R}^{d}$. We will later give some hypotheses about the nonlinear terms of this general system that allow us to apply the methods developed in \cite{GLS1,GLS2,GLS3}. A typical example is the following system:
\begin{equation}\label{MPr}
		\begin{cases}u_{1t}(t,\mathbf{x})=\Delta_\bx u_1(t,\bx)+u_2(t,\bx) g(u_1(t,\mathbf{x})),\, u_1,u_2\in\mathbb{R},\\
			u_{2t}(t,\bx)=\epsilon\Delta_\bx u_2(t,\bx)-\kappa u_2(t,\bx) g(u_1(t,\mathbf{x})), \,
			\bx\in\mathbb{R}^d,\end{cases}
\end{equation}
where
\begin{equation}\label{deng}
	g(u_1)=\begin{cases} e^{-\frac{1}{u_1}} &\mbox{ if } u_1>0 ;\\
		0 &\mbox{ if } u_1\leq 0,\end{cases}
\end{equation}
The parameters $\epsilon$ and $\kappa$ satisfy $0\leq \epsilon < 1$ and $\kappa > 0$. We assume $d\geq 2$ in this paper, but note that $d$ usually has a definite physical meaning only when $d=1,2,$ or $3$ is adopted.

The class of system \eqref{GEQ} admits various types of traveling wave solutions, such as wavefront, waveback and pulse (see, e.g., \cite{Jones,KSS,KV,LMNT}). We will consider a traveling wave solution moving in the direction of a given vector $\be \in \mathbb{R}^{d}$ with a constant speed $c>0$. Without loss of generality, we assume that $\be=(1,0, \ldots, 0)$. Consider a $t$-dependent change of variables $$z=x_{1}-c t, x_{j}=x_{j}, j=2, \ldots, d$$ in \eqref{GEQ}. Re-denoting $\bx=\left(z, x_{2}, \ldots, x_{d}\right)$ again, system \eqref{GEQ} in the new moving coordinates will become
\begin{equation}\label{eq1.1.2}
	\mathbf{u}_t(t,\mathbf{x})=D\Delta_{\mathbf{x}}\mathbf{u}(t,\mathbf{x})+c(\be\cdot\nabla_{\mathbf{x}})\mathbf{u}(t,\mathbf{x})+f(\mathbf{u}(t,\mathbf{x})).
\end{equation}
It can be shown that each solution of \eqref{GEQ} will correspond to a solution of \eqref{eq1.1.2}, and vice versa.

A traveling wave solution $\phi$ along $\be$ for \eqref{GEQ} is a $t$-independent solution $\phi=\phi(z)$ of \eqref{eq1.1.2}, that is, a function that depends only on $z$, the variable along $\be$, so that $\phi$ satisfies the ordinary differential equation
$$
D  \phi_{z z}(z)+c \phi_{z}(z)+f(\phi(z))=0.
$$
The traveling wave $\phi$ is called a planar front if there exist $\bx$-independent steady-state solutions of \eqref{eq1.1.2}, i.e., end states $\bu_-, \bu_+\in \bbR^n$, that satisfy the asymptotic relations
$$
\bu_{-}=\lim _{z \rightarrow-\infty} \phi(z), \bu_{+}=\lim _{z \rightarrow+\infty} \phi(z) .
$$
Such solutions are called \emph{pulses} if $\mathbf{u}_-=\mathbf{u}_+$ and \emph{fronts} if $\mathbf{u}_-\neq \mathbf{u}_+$. Under the circumstances of physical interest, these solutions approach both end states $\bu_-$ and $ \bu_+$ at an exponential rate, i.e., there exist numbers $K>0$ and $\omega_-<0<\omega_+$ such that $||\phi(z)-\mathbf{u}_-||\leq Ke^{-\omega_-z}$ for $z\leq0$ and  $||\phi(z)-\mathbf{u}_+||\leq Ke^{-\omega_+z}$ for $z\geq0$.

To study the stability of $\phi(z)$, we can perturb the function $\phi$ by either
\begin{itemize}
	\item[(i)] adding a function that depends only on one space variable $z$, that is, considering the solution $\mathbf{u}(t, \mathbf{x})$ of \eqref{eq1.1.2} with the initial condition
	\begin{equation*}
		\mathbf{u}(0, \mathbf{x})=\phi(\mathbf{x} \cdot \mathbf{e})+\mathbf{v}(0, \mathbf{x} \cdot \mathbf{e})
	\end{equation*}
	with some $\mathbf{v}: \mathbb{R} \times \mathbb{R} \rightarrow \mathbb{R}^{n}$ with some $\bv$ in the appropriate function space that we will construct later; or by
    \item[(ii)] adding a function that depends on all spatial variables, that is, considering the solution $\mathbf{u}(t, \mathbf{x})$ of \eqref{eq1.1.2} with the initial condition
	\begin{equation*}
		\mathbf{u}(0, \mathbf{x})=\phi(\mathbf{x} \cdot \mathbf{e})+\mathbf{v}(0, \mathbf{x})
	\end{equation*}
	with some $\mathbf{v}: \mathbb{R} \times \mathbb{R}^{d} \rightarrow \mathbb{R}^{n}$ from an appropriate function space.
\end{itemize}

Note that under the first type of perturbation, the problem is indeed very similar to an example in \cite[page 2440-2442]{GLS2}, except that the spatial variables will be multidimensional.  Thus, we focus here on the stability of the steady-state of the traveling front of system \eqref{eq1.1.2} under the second type of perturbation. The main progress of this paper, compared to our previous work, is the extension of the result \cite{GLS2}, for a system of one-dimensional spatial variables, to a system of multidimensional spatial variables, and the absence of that assumption in \cite{GLY} that the diffusion coefficients are the same for different system variables. Moreover, as we will describe in the text, this type of equations has a special "product-triangle" structure in the nonlinear reaction terms, which is similar to the equations studied in the one-dimensional case from \cite{GLS1,GLS2,GLS3}, and the class of nonlinear terms often appear in combustion models.

To better demonstrate how this "product-triangle" structure can help us study the stability of the steady states, we begin with the model case \eqref{MPr} for $\bu=$ $\left(u_{1}, u_{2}\right)^{T} \in \mathbb{R}^{2}$, in the moving coordinates $\bx=(z,x_2,...,x_d)$ the system will become
\begin{equation}\label{eq2.1.3}
	\bu_t(t,\bx)=\begin{pmatrix}
		1 & 0\\0 & \epsilon
	\end{pmatrix}\Delta_{\bx}\bu(t,\bx)+c(\be\cdot\nabla_\bx)\bu(t,\bx)+f(\bu(t,\bx)),
\end{equation}
where $f(\bu(t,\bx))=\begin{pmatrix}
	f_1(u_1,u_2)\\f_2(u_1,u_2)
\end{pmatrix}=\begin{pmatrix}
	u_2(t,\bx) g(u_1(t,\bx))\\-\kappa u_2(t,\bx)g(u_1(t,\bx))
\end{pmatrix}$. 

According to the discussion we presented before, a traveling front solution $\phi=\phi(z)$ is $t-$independent and will approach constant states $\bu_+$ and $\bu_-$ as $z\rightarrow\pm \infty$.  It is clear that the system \eqref{eq2.1.3} has two types of steady state solutions: one when $u_{1}(\mathbf{x})$ equals a real constant and $u_{2}(\mathbf{x})=0$, and the other when $u_{1}(\mathbf{x})=0$ and $u_{2}(\mathbf{x})$ is equal to a real constant. In particular, we can choose $u_{1}=1 / \kappa, u_{2}=0$, which is the equilibrium corresponding to the completely burned reactants, located behind the front, and $u_{1}=0, u_{2}=1$, corresponding to the unburned substances. In other words, we choose $\mathbf{u}_{-}=(1 / \kappa, 0)$ and $\mathbf{u}_{+}=(0,1)$. For more explanation why $\mathbf{u}_{-}$ and $\mathbf{u}_{+}$ are chosen this way, see \cite{GLS1} and Remark \ref{rem1}.

We will only discuss the stability of $\bu_-$ in detail here, since the stability of $\bu_+$ can be proved in precisely the same manner. We will investigate perturbations of the constant solution $\bu_-=(1/\kappa,0)$ that depend on all spatial variables of the system, that is, we consider the solutions $\bu(t,\bx)=\bu_-+\bv(t,\bx)$ of \eqref{eq2.1.3}  with the initial conditions
\begin{equation*}
	\bu(0,\bx)=\bu_-+\bv(0,\bx),
\end{equation*}
where $\bv=(v_1,v_2):\mathbb{R}_+\times\mathbb{R}^d\rightarrow\mathbb{R}^2$ is taken from an appropriate function space. Substituting $\bu(t,\bx)=\bu_-+\bv(t,\bx)$ into system \eqref{eq2.1.3}, we have:
\begin{equation}\label{eq2.1.4}
	\bv_t(t,x)=\begin{pmatrix}
		1 & 0\\0 & \epsilon
	\end{pmatrix} \Delta_{\bx}\bv(t,\bx)+c\partial_z\bv(t,\bx)+f(\bu_-+\bv(t,\bx)).
\end{equation}

Linearizing the nonlinearity $f(\bu_-+\bv(t,\bx))$ at $\bu_-=(1/\kappa,0)$ gives:
\begin{align*}
	f(\bu_-+\bv(t,\bx))&=f(\bu_-)+\partial_{\bu}f(\bu_-)\bv(t,\bx)+H(\bv(t,\bx))\\
	&=\begin{pmatrix}
		0 \\0
	\end{pmatrix}+\begin{pmatrix}
		0 & e^{-\kappa}\\
		0 &-\kappa e^{-\kappa}
	\end{pmatrix}\begin{pmatrix}
		v_1(t,\bx)\\ v_2(t,\bx)
	\end{pmatrix}+H(\bv(t,\bx)),
\end{align*}
where we introduced the nonlinear term by
\begin{equation}\label{eq2.1.6}
	H(\bv(t,\bx))=f(\bu_-+\bv(t,\bx))-f(\bu_-)-\partial_{\bu}f(\bu_-)\bv(t,\bx).
\end{equation}
We therefore have the following semilinear equation for the perturbations of the end state $\bu_-$,
\begin{equation}\label{eq2.1.7}
	\bv_t(t,\bx)=\begin{pmatrix}
		1 & 0 \\ 0 & \epsilon
	\end{pmatrix}\Delta_{\bx}\bv(t,\bx)+c\partial_z\bv(t,\bx)+\begin{pmatrix}
		0 & e^{-\kappa}\\0 &-\kappa e^{-\kappa}
	\end{pmatrix}\bv(t,\bx)+H(\bv(t,\bx)).
\end{equation}

We will show that the spectrum of the linear operator in \eqref{eq2.1.7} touches the imaginary axis in section \ref{sec2}, so that the weight function and the weighted functional space need to be introduced to further investigate the stability of the system under perturbations.

For $\mathcal{E}_{0}$ being the Sobolev spaces $H^{k}\left(\mathbb{R}^{d}\right)\left(k=1,2, \ldots\right.$ and we often define $H^{0}\left(\mathbb{R}^{d}\right)=$ $L^{2}\left(\mathbb{R}^{d}\right)$ ), which are suited for the study of nonlinear stability because they are closed under multiplication, we denote the norm in $\mathcal{E}_{0}$ by $\|\cdot\|_{0}$. Furthermore, we define the weight function of class $\alpha \in \mathbb{R}$ by 
$$
\gamma(\bx)=\gamma_{\alpha}\left(z, x_{2}, \ldots, x_{d}\right)=e^{\alpha} z, \text { for } \bx=\left(z, x_{2}, \ldots, x_{d}\right) \in \mathbb{R}^{d} .
$$
For a fixed weight function $\gamma_{\alpha}$ we define $\mathcal{E}_{\alpha}=\left\{u: \gamma_{\alpha} u \in \mathcal{E}_{0}\right\}$, with the norm $\|u\|_{\alpha}=\left\|\gamma_{\alpha} u\right\|_{0}$. Note that by this definition, $\mathcal{E}_{\alpha}=H_{\alpha}^{k}(\mathbb{R}) \otimes H^{k}\left(\mathbb{R}^{d-1}\right)$. Here and below we use the fact that $H^{k}\left(\mathbb{R}^{d}\right)$ can be written as the tensor product $H^{k}\left(\mathbb{R}^{d}\right)=H^{k}(\mathbb{R}) \otimes$ $H^{k}\left(\mathbb{R}^{d-1}\right)$. For general results on tensor products and operators on tensor products we refer to \cite[Section VIII.10]{RS1}. For ease of notation, we will use $H_{\alpha}^{k}\left(\mathbb{R}^{d}\right)=\left\{u: e^{\alpha z} u \in H^{k}\left(\mathbb{R}^{d}\right)\right\}$ to denote the weighted Sovolev space.

Although this weighted functional space solves the problem of spectral instability of the linear operator, it would also pose another difficulty in that the nonlinear terms cannot be controlled in the weighted space. Hence we introduce a new weighted space using the approach originally proposed by \cite{pw94} in the context of the Hamiltonian:
\begin{equation}\lb{spE}
	\mathcal{E}:=\mathcal{E}_{0} \cap \mathcal{E}_{\alpha}, \text { with }\|u\|_{\mathcal{E}}=\max \left\{\|u\|_{0},\|u\|_{\alpha}\right\} .
\end{equation}

We will prove the following theorem at the end of Section \ref{sec2}. Specifically, when considered in coordinates moving with fronts, we can show that the steady state of a nonlinear model problem of the form \eqref{eq2.1.3} is asymptotically stable in an orbital sense in a carefully chosen exponentially weighted space, i.e., the solution near the steady state  converges exponentially to the steady state solution itself in the weighted norm as long as the initial perturbation is sufficiently small in both the weighted and unweighted norm.

Finally in Section \ref{sec3}, we will summarize some key features of the system being used in the model problem \eqref{MPr} and generalize them into some hypotheses, thus for a general reaction-diffusion system \eqref{GEQ}, we can also prove the orbital stability of the steady state of a traveling front when it satisfies these hypotheses. Moreover, these hypotheses are often very common in reaction-diffusion systems associated with combustion problems.

\section{Stability of the steady states for the Model Case}\lb{sec2}
In this pilot section we study the stability of the end state of systems of the model problem \eqref{MPr}. This section is organized as follows. We study the spectrum of the operator generated by linearizing \eqref{eq2.1.3} about the end state in both unweighted and weighted spaces in subsection \ref{ssec1}. The Lipschitz property of the nonlinear term $H(\bv(t, \bx))$ is shown in subsection \ref{ssec2}, and the stability of the constant steady-state solution $\bu_{-}$ is proved in subections \ref{ssec3} and \ref{ssec4}.

\subsection{The setting in the model case}\lb{ssec1}

The information about the stability of the steady state of system \eqref{eq2.1.3} is often disclosed by the information about the spectrum of the linear operator obtained by linearizing \eqref{eq2.1.3} about the steady state. Therefore, we first define the linear differential expression in \eqref{eq2.1.7} by $L$,
\begin{equation}\label{eq2.1.8}
	L=\begin{pmatrix}
		1 & 0 \\ 0 &\epsilon
	\end{pmatrix}
	\Delta_{\bx}+c\partial_z+\begin{pmatrix}
		0 & e^{-\kappa}\\ 0&-\kappa e^{-\kappa}
	\end{pmatrix}.
\end{equation}
We will consider a differential operator $\mathcal{L}$ associated with the differential expression $L$ in the Sobolev space $H^k(\mathbb{R}^d)^2$ of vector-valued functions and throughout assume that $k\geq [\dfrac{d+1}{2}]$. 
\begin{definition}
	We call a time-independent solution spectrally stable in a space $\mathcal{X}$ if the spectrum $\operatorname{Sp}(\mathcal{L})$ of the operator $\mathcal{L}$ is contained in $\{\lambda: \operatorname{Re} \lambda<a\} \cup\{0\}$ for some $a>0$.
\end{definition}

As we will see soon, the essential spectrum of $\mathcal{L}$ touches the imaginary axis. This prevents $\bu_-$ from being stable in the space $H^k(\mathbb{R}^d)^2$, and we will have to replace the space by the weighted space $H^k_{\alpha}(\mathbb{R}^d)^2$, with an exponential weight with respect to the variable $z$. But then in this new space the nonlinearity will lose the local Lipschitz property needed to see the well-poseness of \eqref{eq2.1.7}. To gain it back, as in \cite{GLS1,GLS2,GLS3,GLY}, we will pass to the intersection space $H^k(\mathbb{R}^d)^2\cap H^k_{\alpha}(\mathbb{R}^d)^2$, and perform further analysis there.

\begin{remark}\label{remark}
	We first need information about the spectra of the linear operators associated with \eqref{eq2.1.8}, so several operators in the different spaces considered below will be involved. We will use the following notation for these operators. If $B$ is a general $(2\times 2)$ system of $n$ differential expressions as, for instance, in \eqref{eq2.1.8}, then we shall use notation $\mathcal{B}:\mathcal{E}_0^2\rightarrow \mathcal{E}_0^2$ and $\mathcal{B}_{\alpha}:\mathcal{E}_{\alpha}^2\rightarrow \mathcal{E}_{\alpha}^2$ to denote the linear operator in $\mathcal{E}_{0}^2$ and $\mathcal{E}_{\alpha}^2$, respectively, given by the formula $u\rightarrow Bu$, with their \textquotedblleft natural\textquotedblright domains, that is, for $k=0,1,\cdots$, we use $\mathcal{L}:\mathcal{E}_0^2\rightarrow\mathcal{E}_0^2$ to denote the linear operator given by the formula  $u\mapsto L u$, whose domain is $H^{k+2}(\mathbb{R}^d)^2$; and use $\mathcal{L}_{\alpha}:\mathcal{E}_{\alpha}^2\rightarrow\mathcal{E}_{\alpha}^2$ to denote the operator in $\mathcal{E}_{\alpha}^2$ given by the formula  $u\mapsto L u$, whose domain is the set of $(u_1,u_2)$ where  $\gamma_{\alpha}u_1,\gamma_{\alpha}u_2\in H^{k+2}(\mathbb{R}^{d})$. We use notation $\mathcal{L}_{\mathcal{E}}:\mathcal{E}^2\rightarrow\mathcal{E}^2$ to denote the linear operator given by $u\rightarrow L u$ with the domain of $\mathcal{L}_{\mathcal{E}}$ being the set of $(u_1, u_2)$ satisfying $(u_1,u_2)\in\dom(\mathcal{L})\cap\dom(\mathcal{L}_{\alpha})$, where $\dom(\mathcal{L})$ and $\dom(\mathcal{L}_{\alpha})$ are respective domains defined above.
	\hfill$\Diamond$	
\end{remark}

First, we will use Fourier transform to explore the spectrum of the constant coefficient differential operator $\mathcal{L}$ on $L^2(\mathbb{R}^d)$, and the spectrum of the constant coefficient differential operator $\mathcal{L}_{\alpha}$ on $L^2_{\alpha}(\mathbb{R})\otimes L^2(\mathbb{R}^{d-1})$, respectively. We will use the following elementary proposition to show that the spectrum of $\mathcal{L}$ on $\mathcal{E}_0^2$ touches the imaginary axis, and the spectrum of $\mathcal{L}_{\alpha}$ on $\mathcal{E}_{\alpha}^2$ will be away from the imaginary axis. 

\begin{proposition}\label{pro2.2.3}
	Assume that $\mathcal{L}$ and $\mathcal{L}_{\alpha}$ are the constant coefficient linear differential operators associated with the differential expression $L$ in \eqref{eq2.1.8}. On the unweighted space $\mathcal{E}_0^2=H^k(\mathbb{R}^d)^2$ for all integers $k\geq 0$, one has
	\begin{equation*}
		\sup\{ \Re\lambda:\lambda\in \operatorname{Sp}(\mathcal{L}) \}=0,
	\end{equation*} 
	so that the spectrum of $\mathcal{L}$ touches the imaginary axis. By choosing $\alpha\in(0,c/2)$, one has $$\sup\{ \Re\lambda: \lambda\in \operatorname{Sp}(\mathcal{L}_{\alpha}) \}<-\nu$$ for some $\nu>0$ so that the spectrum of $\mathcal{L}_{\alpha}$ is shifted to the left of the imaginary axis on the weighted space $\mathcal{E}_{\alpha}^2=H^k_{\alpha}(\mathbb{R})^2\otimes H^k(\mathbb{R}^{d-1})^2$.
	
	Furthermore, there exists $K>0$ such that $\left\|e^{t \mathcal{L}_{\alpha}}\right\|_{\cE_{\alpha}^{2}\rightarrow \cE_{\alpha}^{2}} \leqslant K e^{-\nu t}$ for $t \geqslant 0$
\end{proposition}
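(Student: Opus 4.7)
The plan is to exploit the constant-coefficient structure of $L$ and use the Fourier transform throughout. Because the symbol of $L$ is upper triangular, its eigenvalues are read off the diagonal, which makes the spectral computation completely explicit.

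First I would treat $\mathcal{L}$ on $\mathcal{E}_0^2 = H^k(\bbR^d)^2$. Applying the Fourier transform in $\bx$ with dual variable $\bk = (k_1,\ldots,k_d)$ turns $L$ into multiplication by the matrix-valued symbol
\begin{equation*}
\hat L(\bk) = \begin{pmatrix} -|\bk|^2 + ick_1 & e^{-\kappa} \\ 0 & -\epsilon |\bk|^2 + ick_1 - \kappa e^{-\kappa}\end{pmatrix}.
\end{equation*}
Since this matrix is upper triangular, its eigenvalues are $\lambda_1(\bk) = -|\bk|^2 + ick_1$ and $\lambda_2(\bk) = -\epsilon|\bk|^2 + ick_1 - \kappa e^{-\kappa}$. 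For a constant-coefficient operator on $\bbR^d$, the spectrum on $H^k$ is the essential range of the symbol's eigenvalues. The real parts are $-|\bk|^2$ and $-\epsilon|\bk|^2 - \kappa e^{-\kappa}$, respectively; both are $\leq 0$, and $\lambda_1(0) = 0$, so $\sup\{\Re\lambda : \lambda \in \Sp(\mathcal{L})\} = 0$.

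For $\mathcal{L}_\alpha$, I would use the isometric isomorphism $M_\alpha : \mathcal{E}_\alpha \to \mathcal{E}_0$, $u \mapsto e^{\alpha z} u$, which conjugates $\mathcal{L}_\alpha$ to the constant-coefficient operator $\widetilde{\mathcal{L}} := M_\alpha \mathcal{L}_\alpha M_\alpha^{-1}$ on $\mathcal{E}_0^2$. A direct computation, using $M_\alpha \partial_z M_\alpha^{-1} = \partial_z - \alpha$ and $M_\alpha \Delta_\bx M_\alpha^{-1} = \Delta_\bx - 2\alpha\partial_z + \alpha^2$, shows that the symbol $\hat{\widetilde L}(\bk)$ is obtained from $\hat L(\bk)$ by substituting $k_1 \mapsto k_1 + i\alpha$. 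The symbol remains upper triangular with diagonal entries
\begin{equation*}
\widetilde\lambda_1(\bk) = -|\bk|^2 + \alpha(\alpha - c) + ik_1(c-2\alpha),\qquad \widetilde\lambda_2(\bk) = -\epsilon|\bk|^2 + \alpha(\epsilon\alpha - c) - \kappa e^{-\kappa} + ik_1(c - 2\epsilon\alpha).
\end{equation*}
Choosing $\alpha \in (0,c/2)$ makes $\alpha(\alpha-c) < 0$ and, since $\epsilon \in [0,1)$, also $\alpha(\epsilon\alpha - c) - \kappa e^{-\kappa} < 0$. Both real parts are thus bounded above by a strictly negative constant $-\nu$, uniformly in $\bk$, so $\Sp(\mathcal{L}_\alpha) \subset \{\Re\lambda \leq -\nu\}$ as claimed.

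The semigroup bound then follows because $e^{t\mathcal{L}_\alpha}$ is unitarily (up to $M_\alpha$) similar to the Fourier multiplier $e^{t\hat{\widetilde L}(\bk)}$. Using the upper triangular structure I can write explicitly
\begin{equation*}
e^{t\hat{\widetilde L}(\bk)} = \begin{pmatrix} e^{t\widetilde\lambda_1(\bk)} & e^{-\kappa}\,\dfrac{e^{t\widetilde\lambda_1(\bk)} - e^{t\widetilde\lambda_2(\bk)}}{\widetilde\lambda_1(\bk) - \widetilde\lambda_2(\bk)} \\ 0 & e^{t\widetilde\lambda_2(\bk)} \end{pmatrix},
\end{equation*}
and the difference $\widetilde\lambda_1 - \widetilde\lambda_2 = (1-\epsilon)(-|\bk|^2 + \alpha^2 - 2i\alpha k_1) + \kappa e^{-\kappa}$ has real part bounded below by $\kappa e^{-\kappa} - (1-\epsilon)\alpha^2 > 0$ for suitable $\alpha$, so the matrix symbol is uniformly bounded by $K e^{-\nu t}$. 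Since the Sobolev norm is Fourier-characterized by the weight $(1+|\bk|^2)^{k/2}$ which commutes with the multiplier, Plancherel gives $\|e^{t\mathcal{L}_\alpha}\|_{\mathcal{E}_\alpha^2 \to \mathcal{E}_\alpha^2} \leq K e^{-\nu t}$. The main (though still modest) obstacle is to control the off-diagonal entry uniformly in $\bk$ and avoid a spurious polynomial prefactor in $t$ when $\widetilde\lambda_1 - \widetilde\lambda_2$ is small; the positive constant $\kappa e^{-\kappa}$ baked into this difference by the reaction term is what saves the day.
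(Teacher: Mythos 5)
Your computation of the symbols of $\mathcal{L}$ and of the conjugated operator $M_\alpha\mathcal{L}_\alpha M_\alpha^{-1}$, and the resulting identification of $\sup\{\Re\lambda:\lambda\in\Sp(\mathcal{L})\}=0$ and $\sup\{\Re\lambda:\lambda\in\Sp(\mathcal{L}_\alpha)\}=\alpha^2-c\alpha<0$, is correct and is essentially the paper's argument (the paper reduces from $H^k$ to $L^2$ via a separate similarity lemma rather than asserting the essential-range fact outright, but that is cosmetic). Where you genuinely depart from the paper is the semigroup bound: the paper invokes the spectral mapping property for this class of operators established in \cite{GLS1}, while you bound the matrix exponential of the symbol directly. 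That route can work and is more self-contained, but the specific estimate you use to control the off-diagonal entry is false. You claim $\Re\big(\tilde\lambda_1(\bk)-\tilde\lambda_2(\bk)\big)\ge \kappa e^{-\kappa}-(1-\epsilon)\alpha^2>0$ uniformly in $\bk$; in fact $\Re(\tilde\lambda_1-\tilde\lambda_2)=(1-\epsilon)(\alpha^2-|\bk|^2)+\kappa e^{-\kappa}$, which tends to $-\infty$ as $|\bk|\to\infty$ when $\epsilon<1$, and the full difference vanishes at the points with $k_1=0$ and $|\bk|^2=\alpha^2+\kappa e^{-\kappa}/(1-\epsilon)$, which exist since $d\ge 2$. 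So the two eigenvalue branches actually cross, your closed formula for $e^{t\hat{\tilde L}(\bk)}$ has a removable singularity there, and no uniform lower bound on $|\tilde\lambda_1-\tilde\lambda_2|$ is available; the constant $\kappa e^{-\kappa}$ does not ``save the day.''

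The gap is easily repaired: write the divided difference as $\frac{e^{t\tilde\lambda_1}-e^{t\tilde\lambda_2}}{\tilde\lambda_1-\tilde\lambda_2}=t\int_0^1 e^{t(s\tilde\lambda_1+(1-s)\tilde\lambda_2)}\,ds$, which is bounded by $t\,e^{-\bar\nu t}$ with $-\bar\nu:=\max_{j}\sup_{\bk}\Re\tilde\lambda_j(\bk)<0$, uniformly in $\bk$ and in particular across the crossing points; the polynomial prefactor $t$ is then absorbed by replacing $\bar\nu$ with any $\nu\in(0,\bar\nu)$. With that correction your argument closes, and it has the mild advantage over the paper's route of treating $\epsilon=0$ (where the semigroup is only strongly continuous, not analytic, and spectral mapping is delicate) on exactly the same footing as $\epsilon>0$.
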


\begin{proof}
	By Lemma \ref{lem2.2.1} proved next, it is enough to consider the case $k=0$, that is, to assume that $\cE_0=L^2(\bbR^d)$. To find $\operatorname{Sp}(\mathcal{L})$ in the unweighted space $\mathcal{E}_0^2$, we can use Fourier transform. By properties of Fourier transform, see, e.g., \cite[Section 6.5]{EngelNagel}, the operator $\mathcal{L}$ on $L^2(\bbR^d)^2$ is similar to the operator on $L^2(\bbR^d)^2$ of multiplication by the matix-valued function 
	\begin{equation}\label{Mxi}
		M(\xi)=-(\xi_1^2+\xi_2^2+\cdots+\xi_d^2)\begin{pmatrix}
			1 & 0 \\ 0 &\epsilon
		\end{pmatrix}+i\xi_1 c\begin{pmatrix}
			1 &0 \\ 0&1
		\end{pmatrix}+\begin{pmatrix}
			0 & e^{-\kappa} \\ 0 &-\kappa e^{-\kappa}
		\end{pmatrix},
	\end{equation}
	where $\xi=(\xi_1,...,\xi_d)\in\mathbb{R}^d$.
	Thus the spectrum of $\mathcal{L}$ on $L^2(\bbR^d)^2$ is the closure of the union over $\xi\in\mathbb{R}^d$ of the spectra of the matrices $M(\xi)$. Hence the spectrum of $\mathcal{L}$ is equal to the closure of the set of $\lambda\in\mathbb{C}$ for which there exists $\xi\in\mathbb{R}^d$ such that 
	\begin{align*}
		\det \big(M(\xi)-\lambda I\big)&=\det\left(-(\xi_1^2+\xi_2^2+\cdots+\xi_d^2)\left(\begin{smallmatrix}
			1 & 0 \\ 0 &\epsilon
		\end{smallmatrix}\right)+i\xi_1 c I+\left(\begin{smallmatrix}
			0 & e^{-\kappa} \\ 0 &-\kappa e^{-\kappa}
		\end{smallmatrix}\right)\right)\\
		&=0.
	\end{align*} 
	It is a collection of curves $\lambda=\lambda(\xi)$, where $\lambda(\xi)$ are the eigenvalues of the matrices $M(\xi)$. Thus the spectrum of the operator $\mathcal{L}$ is 
	\begin{align}\label{spectrum1}
		\operatorname{Sp}(\mathcal{L})&=\displaystyle \mathop{\cup}_{\xi\in\mathbb{R}^d}\operatorname{Sp}\begin{pmatrix}
			-(\xi_1^2+\cdots+\xi_d^2)+ci\xi_1 & e^{-\kappa} \\ 0 & -\epsilon (\xi_1^2+\cdots+\xi_d^2)+ci\xi_1-\kappa e^{-\kappa}\end{pmatrix}\\\no
		&=\displaystyle\mathop{\cup}_{\xi\in\mathbb{R}^d}(-(\xi_1^2+\cdots+\xi_d^2)+ci\xi_1)\bigcup\displaystyle\mathop{\cup}_{\xi\in\mathbb{R}^d}(-\epsilon (\xi_1^2+\cdots+\xi_d^2)+ci\xi_1-\kappa e^{-\kappa}).
	\end{align}
	
	This implies that the spectrum of $\mathcal{L}$ in $L^2(\bbR^d)^2$ touches the imaginary axis when $\xi=(\xi_1,...,\xi_d)=(0,...,0)$.
	
	We also need $\operatorname{Sp}(\mathcal{L}_{\alpha})$ on the weighted space $\mathcal{E}_{\alpha}^2$. First define the linear map $N:\mathcal{E}_{\alpha}\mapsto\mathcal{E}_0$ given by $N v=\gamma_{\alpha}v$, and notice that by definition $N$ is an isomorphism of $\mathcal{E}_{\alpha}$ onto $\mathcal{E}_0$. In particular, we can define a linear operator $\hat{\mathcal{L}}=N \mathcal{L}_{\alpha} N^{-1}$ on $\mathcal{E}_0^2=L^2(\mathbb{R}^d)^2$, with the domain $\dom(\hat{\mathcal{L}})=H^{2}(\mathbb{R}^d)^2$ since $N^{-1}$ maps $\dom(\hat{\mathcal{L}})$ in $\dom(\mathcal{L}_{\alpha})$. The operator $\hat{\mathcal{L}}$ is similar to $\mathcal{L}_{\alpha}$ on $\mathcal{E}_{\alpha}^2$ and hence has the same spectrum. 
	
	In particular, let us consider the operator $\partial_{z,\alpha}$ on $\mathcal{E}_{\alpha}$ with $$\dom(\partial_{z,\alpha})=H^{1}_{\alpha}(\mathbb{R})\otimes H^{1}(\mathbb{R}^{d-1}).$$
	Fix any $v\in H^{1}(\mathbb{R}^{d})=\dom(\hat{\partial_z})$ when $\hat{\partial_z}$ is considered in $L^2(\mathbb{R}^d)$, and $\hat{\partial_z}=N\partial_{z,\alpha}N^{-1}$. Then, temporarily redenoting $\gamma_{\alpha}(z)=e^{\alpha z}$, we have
	\begin{align*}
		\partial v=N\partial_{z,\alpha}N^{-1}v&=\gamma_{\alpha}\partial_z(\gamma_{-\alpha}v)=\gamma_{\alpha}(\gamma_{-\alpha}'v+\gamma_{-\alpha}\partial_zv)\\
		&=\gamma_{\alpha}(-\alpha\gamma_{-\alpha}v+\gamma_{-\alpha}\partial_zv)\\
		&=(\partial_z-\alpha) v.
	\end{align*}
	
	Denoting $y=(x_2,...,x_d)$, then $\bx=(z,y)\in\mathbb{R}^d$, a similar computation shows that for each 
	\begin{equation*}
		\bv=(v_1,v_2)^T\in\dom\hat{\mathcal{L}}=H^{2}(\mathbb{R}^d)^2\subset L^2(\mathbb{R}^d)^2,
	\end{equation*}
	we have:
	\begin{align*}
		\hat{\mathcal{L}}\bv=& e^{\alpha z}\left(\begin{pmatrix}
			1 & 0\\0&\epsilon
		\end{pmatrix}(\Delta_y+\partial_{zz})+c\partial_z+\begin{pmatrix}
			0 & e^{-\kappa}\\0&-\kappa e^{-\kappa}
		\end{pmatrix}\right) ( e^{-\alpha z}\bv)\\
		=&\left(\begin{smallmatrix}
			1 & 0\\0&\epsilon
		\end{smallmatrix}\right)\Delta_{y} \bv+\left(\begin{smallmatrix}
			1 & 0\\0&\epsilon
		\end{smallmatrix}\right)( \alpha^2\bv-2\alpha \partial_z\bv+\partial_{zz}\bv )+c(\partial_z\bv-\alpha \bv)+\left(\begin{smallmatrix}
			0 & e^{-\kappa}\\0&-\kappa e^{-\kappa}
		\end{smallmatrix}\right)\bv
		\\
		=&\left(\begin{smallmatrix}
			1 & 0\\0&\epsilon
		\end{smallmatrix}\right)\Delta_{\bx} \bv+ \left(cI-2\alpha\left(\begin{smallmatrix}
			1 & 0\\0&\epsilon
		\end{smallmatrix}\right)\right)\partial_z\bv+\left(\alpha^2\left(\begin{smallmatrix}
			1 & 0\\0&\epsilon
		\end{smallmatrix}\right)-c\alpha I+\left(\begin{smallmatrix}
			0 & e^{-\kappa}\\0&-\kappa e^{-\kappa}
		\end{smallmatrix}\right)\right)\bv.
	\end{align*}
	
	Via the Fourier transform, the operator $\hat{\mathcal{L}}$ on $L^2(\bbR^d)^2$ is similar to the operator of multiplication on $L^2(\bbR^d)^2$ by the matrix-valued function 
	\begin{align*}
		N(\xi)& =-\|\xi\|^2\left(\begin{smallmatrix}
			1&0\\0&\epsilon
		\end{smallmatrix}\right)+(i\xi_1c-\alpha c)I+(\alpha^2-2i\xi_1\alpha)\left(\begin{smallmatrix}
			1&0\\0&\epsilon
		\end{smallmatrix}\right)+\left(\begin{smallmatrix}
			0 & e^{-\kappa}\\0&-\kappa e^{-\kappa}
		\end{smallmatrix}\right)\\
		&=\left(\begin{smallmatrix}
			-\|\xi\|^2+(c-2\alpha)i\xi_1+\alpha^2-\alpha c& e^{-\kappa}\\
			0 & -\epsilon \|\xi\|^2+(c-2\alpha\epsilon)i\xi_1+\alpha^2\epsilon-c\alpha-\kappa e^{-\kappa}
		\end{smallmatrix}\right)
	\end{align*}
	where $\|\xi\|^2=\xi_1^2+\cdots+\xi_d^2$.
	Hence,
	\begin{align}\label{spectrum2}
	\operatorname{Sp}(\mathcal{L}_{\alpha})
		&=\displaystyle\mathop{\cup}_{\xi\in\mathbb{R}^d}(-(\xi_1^2+\cdots+\xi_d^2)+(c-2\alpha)i\xi_1+\alpha^2-c\alpha)\no\\
		&\qquad\bigcup\displaystyle\mathop{\cup}_{\xi\in\mathbb{R}^d}(-\epsilon (\xi_1^2+\cdots+\xi_d^2)+(c-2\alpha\epsilon)i\xi_1+\alpha^2\epsilon-c\alpha-\kappa e^{-\kappa}).
	\end{align}
	Then
	\begin{align*}
		\sup\{ \Re\lambda:\lambda\in\operatorname{Sp}(\mathcal{L}_{\alpha}) \}&=\sup \{ \Re\lambda:\lambda\in \operatorname{Sp}(\hat{\mathcal{L}}) \}\\
		&=\max(\alpha^2-c\alpha,\epsilon\alpha^2-c\alpha-\kappa e^{-\kappa})\\
		&=\alpha^2-c\alpha.
	\end{align*}
	Thus we conclude that for $\alpha\in(0,c/2)$, one has $\sup\{ \Re\lambda:\lambda\in \operatorname{Sp}(\mathcal{L}_{\alpha}) \}<0$ so that the spectrum $\operatorname{Sp}(\mathcal{L}_{\alpha})$ on the weighted space $\mathcal{E}_{\alpha}^2$ is moved to the left of the imaginary axis.
	
	Furthermore, the operator $\mathcal{L}_{\alpha}$ associated with the differential expression $L$ in \eqref{eq2.1.8} generates an analytic semigroup provided $\epsilon>0$ and a strongly continuous semigroup provided $\epsilon=0$. As shown in \cite{GLS1}, in either case $\mathcal{L}$ enjoys the spectral mapping property, that is, the boundary of the spectrum of the semigroup operator $e^{t \mathcal{L}_{\alpha}}$ is controlled by the boundary of the spectrum of the semigroup generator $\mathcal{L}_{\alpha}$ for any $\epsilon \geqslant 0$. Then by the above mentioned semigroup property, see, e.g. \cite[Proposition 4.3]{GLS1}, there exists $K>0$ such that $\left\|e^{t \mathcal{L}_{\alpha}}\right\|_{\cE_{\alpha}^{2} \rightarrow \cE_{\alpha}^2} \leqslant K e^{-\nu t} .$
\end{proof}

\begin{lemma}\label{lem2.2.1}
	The linear constant coefficient differential operator $\mathcal{L}$ associated with $L$ defined in \eqref{eq2.1.8} has the same spectrum on $L^2(\mathbb{R}^d)^2$ and on $H^k(\mathbb{R}^d)^2$ for all integers $k> 0$; similarly, the operator $\mathcal{L}_{\alpha}$ associated with $L$ defined in \eqref{eq2.1.8} has the same spectrum on $L^2_{\alpha}(\mathbb{R})\otimes L^2(\mathbb{R}^{d-1})$ and on $H^k_{\alpha}(\mathbb{R})\otimes H^k(\mathbb{R}^{d-1})$, for all integers $k> 0$. 
\end{lemma}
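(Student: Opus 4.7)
The plan is to pass to the Fourier side, where (after an auxiliary conjugation in the weighted case) the operator becomes multiplication by a matrix-valued symbol; then to observe that the spectrum of such a multiplier on a weighted $L^2$ space with a strictly positive \emph{scalar} weight is independent of the weight, which is precisely what yields independence from the Sobolev index $k$.

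First I would treat $\mathcal{L}$. Using the Bessel-potential description, $u\in H^k(\mathbb{R}^d)$ iff $(1+\|\xi\|^2)^{k/2}\hat u\in L^2(\mathbb{R}^d)$, so the Fourier transform $\mathcal{F}$ identifies $H^k(\mathbb{R}^d)$ isometrically with the weighted space $L^2\bigl(\mathbb{R}^d;(1+\|\xi\|^2)^k\,d\xi\bigr)$. Under $\mathcal{F}$ the operator $\mathcal{L}$ becomes multiplication by the matrix symbol $M(\xi)$ from \eqref{Mxi}, with its natural domain $H^{k+2}(\mathbb{R}^d)^2$ corresponding on the Fourier side to $\{f:(1+\|\xi\|^2)^{(k+2)/2}f\in L^2\}^2$. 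The scalar rescaling $U_k\colon f\mapsto (1+\|\xi\|^2)^{k/2}f$ is a unitary isomorphism from the weighted $L^2$ onto the unweighted $L^2$, and since the weight is scalar it commutes with multiplication by the matrix $M(\xi)$. Hence $U_k\circ\mathcal{F}$ intertwines the two realizations of the multiplier and carries the $H^{k+2}$-domain exactly onto the $H^2$-domain. Spectral invariance under unitary equivalence then gives $\Sp(\mathcal{L}|_{H^k(\mathbb{R}^d)^2})=\Sp(\mathcal{L}|_{L^2(\mathbb{R}^d)^2})$.

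For $\mathcal{L}_\alpha$ I would first reduce to a constant coefficient operator on an unweighted space, exactly as in the proof of Proposition~\ref{pro2.2.3}: the multiplication map $N\colon u\mapsto \gamma_\alpha u$ is an isomorphism of $\mathcal{E}_\alpha$ onto $\mathcal{E}_0$ on every Sobolev scale (because $\gamma_\alpha(\bx)=e^{\alpha z}$ depends only on $z$, so it maps $H^k_\alpha(\mathbb{R})\otimes H^k(\mathbb{R}^{d-1})$ isomorphically onto $H^k(\mathbb{R}^d)$), and $\hat{\mathcal{L}}:=N\mathcal{L}_\alpha N^{-1}$ is again a constant coefficient differential operator on $\mathcal{E}_0$ whose matrix symbol $N(\xi)$ is polynomial of degree two in $\xi$. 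Applying the Fourier/rescaling argument of the previous paragraph to $\hat{\mathcal{L}}$ yields the equality of its spectra on $H^k(\mathbb{R}^d)^2$ and on $L^2(\mathbb{R}^d)^2$, and transporting back through $N$ produces the corresponding equality for $\mathcal{L}_\alpha$ on $H^k_\alpha(\mathbb{R})\otimes H^k(\mathbb{R}^{d-1})$ versus $L^2_\alpha(\mathbb{R})\otimes L^2(\mathbb{R}^{d-1})$.

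The only point requiring care is the bookkeeping of domains under $\mathcal{F}$ and $U_k$: one must verify that $U_k\circ\mathcal{F}$ sends $H^{k+2}(\mathbb{R}^d)$ onto $H^2(\mathbb{R}^d)$, and likewise after conjugation by $N$. This reduces to the elementary scalar identity $(1+\|\xi\|^2)^{(k+2)/2}=(1+\|\xi\|^2)^{k/2}\,(1+\|\xi\|^2)$, so this ``main obstacle'' is really just a routine verification; everything else is a direct application of the functional calculus for matrix-valued Fourier multipliers.
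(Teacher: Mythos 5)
Your proposal is correct and follows essentially the same route as the paper: conjugation by the Fourier transform composed with the scalar rescaling $f\mapsto(1+\|\xi\|^2)^{k/2}f$ (the paper's map $m\mathcal{F}_1$) exhibits the $H^k$ and $L^2$ realizations as similar operators, the key point in both arguments being that the scalar weight commutes with the matrix symbol. Your handling of the weighted case by first conjugating with $N:u\mapsto\gamma_\alpha u$ is exactly the "analogous argument" the paper invokes, and your domain bookkeeping matches the paper's parenthetical remark on domains.
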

\begin{proof}
	To show that the spectrum of $\mathcal{L}$ in $H^k(\mathbb{R}^d)^2$ is the same as the spectrum of $L^2(\mathbb{R}^d)^2$,
	we let $\mathcal{F}_1$ denote the Fourier transform acting from $H^k(\mathbb{R}^d)^2$ into $L^2_m(\mathbb{R}^d)^2$, where $L^2_m(\mathbb{R}^d)^2$ is the weighted $L^2$-space with the standard weight $m(\xi)=(1+|\xi|_{\mathbb{R}^d}^2)^{k/2}$. By the standard property of the Fourier transform we have $\mathcal{F}_1\Delta_x=-|\xi|_{\mathbb{R}^d}^2 \mathcal{F}_1$ and $\mathcal{F}_1 \partial_z=-i\xi_1\mathcal{F}_1$. Thus $\mathcal{F}_1\mathcal{L}=M\mathcal{F}_1$ for a matrix-valued function $M=M(\xi)$ obtained from \eqref{eq2.1.8} by replacing $\Delta_x$ by $-|\xi|_{\mathbb{R}^d}^2$ and $\partial_z$ by $-i\xi_1$.
	
	On the other hand, the operator of multiplication by $m(\cdot)$ is an isomorphism of $L^2_m(\mathbb{R}^d)^2$ onto $L^2(\mathbb{R}^d)^2$. Let us denote by $\mathcal{L}_{H^k}$ the operator $\mathcal{L}$ associated with $L$ on the space $H^k(\mathbb{R}^d)^2$, and by $\mathcal{L}_{L^2}$ the operator $\mathcal{L}$ associated with $L$ on the space $L^2(\mathbb{R}^d)^2$. By the previous paragraph we then have $m\mathcal{F}_1\mathcal{L}_{H^k}=Mm\mathcal{F}_1$. (Here and below we allow a slight abbreviation of notation, the proper writing is that $u\in \dom(\mathcal{L}_{H^k})$ implies $m\mathcal{F}_1u\in\dom(M)$ and $m\mathcal{F}_1\mathcal{L}_{H^k}u=Mm\mathcal{F}_1u$ for all $u\in\dom(\mathcal{L}_{H^k})$.)
	
	We remark that the operator of multiplication by $-i\xi_j$, $j=1,...,d$ on $L^2(\mathbb{R}^d)$ is similar to the operator of differentiation $\partial_{x_j}$ on $L^2(\mathbb{R}^d)$ via the Fourier transform $\mathcal{F}_2$. This implies that $\mathcal{F}_2\mathcal{L}_{L^2}=M\mathcal{F}_2$ with the same matrix-valued function $M$ as above. It follows that 
	\begin{equation}\lb{OPARES}
		\mathcal{L}_{H^k}=(m\mathcal{F}_1)^{-1}Mm\mathcal{F}_1=(m\mathcal{F}_1)^{-1}(\mathcal{F}_2\mathcal{L}_{L^2}\mathcal{F}_2^{-1})(m\mathcal{F}_1),
	\end{equation}
	therefore the spectrum of $\mathcal{L}$ on $H^k(\mathbb{R}^d)^2$ is the same as the spectrum of $\mathcal{L}$ on $L^2(\mathbb{R}^d)^2$ because the operators on $H^k(\mathbb{R}^d)^2$ and $L^2(\mathbb{R}^d)^2$ are similar.
	
	By analogous argument, the spectrum of $\mathcal{L}_{\alpha}$ on $L^2_{\alpha}(\mathbb{R})\otimes L^2(\mathbb{R}^{d-1})$ is the same as the spectrum of $\mathcal{L}_{\alpha}$ on $H^k_{\alpha}(\mathbb{R})\otimes H^k(\mathbb{R}^{d-1})$.
\end{proof}

\begin{remark}\label{spectp}
	Recall that we denote $y=(x_2,...,x_d)$. Let $\Delta_y$ be the operator given by the differential expression $\partial_{x_2}^2+\cdots+\partial_{x_d}^2$, where the domain of $\Delta_y$ on $H^k(\mathbb{R}^{d-1})$ is the set of $u$ such that $u\in H^{k+2}(\mathbb{R}^{d-1})$. We denote by $\mathcal{L}_{1,\alpha}:H^k_{\alpha}(\mathbb{R})^2\rightarrow H^k_{\alpha}(\mathbb{R})^2$ the operator given by the differential expression $\partial_{zz}+c\partial_z+\begin{pmatrix}
		0 & e^{-\kappa}\\0&-\kappa e^{-\kappa}
	\end{pmatrix}$, and $\dom(\mathcal{L}_{1,\alpha})=H^{k+2}_{\alpha}(\mathbb{R})^2\subset H^k_{\alpha}(\mathbb{R})^2$. The operator $\mathcal{L}_{\alpha}$ on $\mathcal{E}_{\alpha}^2=H^k_{\alpha}(\mathbb{R})^2\otimes H^k(\mathbb{R}^{d-1})^2$ can be written as $\mathcal{L}_{1,\alpha}\otimes I_{H^k(\mathbb{R}^{d-1})}+I_{H^k_{\alpha}(\mathbb{R})}\otimes \Delta_y$. We have yet another approach to prove Propositon \ref{pro2.2.3} by using \cite[Theorem XIII.34, Theorem XIII.35 and Corollary 1]{ReedSimon4} . Indeed, since $\mathcal{L}_{1,\alpha}$ and $\Delta_y$ are the generators of bounded semigroups on $H^k_{\alpha}(\mathbb{R})$ and $H^k(\mathbb{R}^{d-1})$ respectively, we have (see \cite{ReedSimon4})
	\begin{equation}\label{tensorform}
		\operatorname{Sp}(\mathcal{L}_{1,\alpha}\otimes I_{H^k(\mathbb{R}^{d-1})}+I_{H^k_{\alpha}(\mathbb{R})}\otimes
		\Delta_y)=\operatorname{Sp}(\mathcal{L}_{1,\alpha})+\operatorname{Sp}(\Delta_y).
	\end{equation}
	 Thus, $\operatorname{Sp}(\cL_{\alpha})=\operatorname{Sp}(\cL_{1,\alpha})+\operatorname{Sp}(\Delta_y)$. It is easy to see that the spectrum of $\Delta_y$ on $H^k(\mathbb{R}^{d-1})$ is the non-negative semiline $(-\infty, 0]$ and we have showed that the spectrum of $\mathcal{L}_{1,\alpha}$ on $H^k_{\alpha}(\mathbb{R})$ satisfies $\sup\{ \Re\lambda: \lambda\in \operatorname{Sp}(\mathcal{L}_{1,\alpha}) \}<-\nu$ for some $\nu>0$, thus Proposition \ref{pro2.2.3} is proved. Moreover, the same argument shows that if $\Gamma$ is the curve that bounds the spectrum of $\mathcal{L}_{1,\alpha}$ on the right, then $\operatorname{Sp}(\mathcal{L}_{\alpha})$ is the entire solid part of the plane bounded by $\Gamma$.
\end{remark}

Now notice that the differential expression $L$ in \eqref{eq2.1.8} has the following triangular structure,
\begin{equation}\lb{trs}
	L=\begin{pmatrix}
		\Delta_{\bx}+c\partial_{z} & e^{-\kappa}\\
		0 & \epsilon\Delta_{\bx}+c\partial_{z}-\kappa e^{-\kappa}
	\end{pmatrix}.
\end{equation}

Let 
\begin{align}\label{eq2.2.5}
	&L^{(1)}=\Delta_{\bx}+c\partial_{z};\\
	\label{eq2.2.6}
	&L^{(2)}=\epsilon\Delta_{\bx}+c\partial_{z}-\kappa e^{-\kappa},
\end{align}
and for $i=1,2$, let $\mathcal{L}^{(i)}$ be the operator on $H^k(\mathbb{R}^d)$ defined by $v_i\mapsto L^{(i)}v_i$, with the domain of $\mathcal{L}^{(i)}$ to be $H^{k+2}(\mathbb{R}^d)$, for $k=0,1,2,...$.
\begin{lemma}\label{lem2.2.4}Consider the operators $\mathcal{L}^{(1)}$ and $\mathcal{L}^{(2)}$ on $H^k(\mathbb{R}^d)$ defined by the differential expressions $L^{(1)}$ and $L^{(2)}$ given in \eqref{eq2.2.5} and \eqref{eq2.2.6}.
	\begin{itemize}
		\item[(1)] The operator $\mathcal{L}^{(1)}$ generates a bounded strongly continuous semigroup on\\
		$H^k(\mathbb{R}^d)$;
		\item[(2)] The operator $\mathcal{L}^{(2)}$ satisfies $\sup\{\Re\lambda:\lambda\in \operatorname{Sp}(\mathcal{L}^{(2)})\}<0$ on $H^k(\mathbb{R}^d)$;
		\item[(3)] The following is true on $H^k(\mathbb{R}^d)$:
		\begin{itemize}
			\item[(a)] $\sup\{\Re\lambda:\lambda\in \operatorname{Sp}(\mathcal{L}^{(1)})\}\leq 0$;
			\item[(b)] There exist $K>0$ and $\rho>0$ such that for the strongly continuous semigroup $\{ e^{t\mathcal{L}^{(2)}} \}_{t\geq 0}$, one has $\|e^{t\mathcal{L}^{(2)}}\|_{H^k(\bbR^d)\rightarrow H^k(\bbR^d)}\leq Ke^{-\rho t}$ for all $t\geq 0$.
		\end{itemize}
	\end{itemize} 
\end{lemma}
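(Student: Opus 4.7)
The plan is to diagonalize both constant-coefficient operators via the Fourier transform on $L^2(\mathbb{R}^d)$, read off their spectra and semigroup bounds from the corresponding symbols, and transport these facts to $H^k(\mathbb{R}^d)$ via the intertwining $m\mathcal{F}_1:H^k(\mathbb{R}^d)\to L^2(\mathbb{R}^d)$ built in the proof of Lemma \ref{lem2.2.1}. This intertwining is in fact an isometry: $\mathcal{F}_1:H^k(\mathbb{R}^d)\to L^2_m(\mathbb{R}^d)$ is unitary by the definition of the Sobolev norm, and multiplication by $m(\xi)=(1+|\xi|^2)^{k/2}$ is an isometric isomorphism $L^2_m(\mathbb{R}^d)\to L^2(\mathbb{R}^d)$, so operator norms and semigroup bounds transfer with no loss of constants from the $L^2$ setting to the $H^k$ setting.

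For parts (1) and (3)(a), the symbol of $\mathcal{L}^{(1)}$ is $m_1(\xi)=-|\xi|^2+ic\xi_1$. The spectrum of multiplication by $m_1$ equals the closure of its range, which is contained in the closed left half-plane and touches the imaginary axis at $0$; this gives (3)(a). The semigroup is multiplication by $e^{tm_1(\xi)}$, of modulus $e^{-t|\xi|^2}\leq 1$, and strong continuity on $L^2(\mathbb{R}^d)$ is inherited from the (commuting) heat and transport semigroups, both of which are $C_0$. After isometric transfer, $\{e^{t\mathcal{L}^{(1)}}\}_{t\geq 0}$ is a bounded $C_0$-semigroup of contractions on $H^k(\mathbb{R}^d)$, proving (1).

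For parts (2) and (3)(b), the symbol of $\mathcal{L}^{(2)}$ is $m_2(\xi)=-\epsilon|\xi|^2+ic\xi_1-\kappa e^{-\kappa}$, with $\Re m_2(\xi)\leq -\kappa e^{-\kappa}$ uniformly in $\xi$, so $\sup\{\Re\lambda:\lambda\in\operatorname{Sp}(\mathcal{L}^{(2)})\}\leq -\kappa e^{-\kappa}<0$, proving (2). The pointwise bound $|e^{tm_2(\xi)}|\leq e^{-\kappa e^{-\kappa} t}$ yields $\|e^{t\mathcal{L}^{(2)}}\|_{L^2\to L^2}\leq e^{-\kappa e^{-\kappa} t}$, and isometric transfer gives (3)(b) on $H^k(\mathbb{R}^d)$ with $K=1$ and $\rho=\kappa e^{-\kappa}$. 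The one point requiring care is strong continuity when $\epsilon=0$, where $\mathcal{L}^{(2)}$ is a bounded perturbation of a first-order transport rather than sectorial; this is immediate from the explicit formula $(e^{t\mathcal{L}^{(2)}}v)(z,y)=e^{-\kappa e^{-\kappa} t}v(z+ct,y)$, which manifestly defines a $C_0$-semigroup on every $H^k(\mathbb{R}^d)$. Overall the argument is largely bookkeeping; the only real obstacle is tracking the isometric (rather than merely similar) nature of the transfer in Lemma \ref{lem2.2.1}, which is what secures the bound in (3)(b) with the clean constants $K=1$, $\rho=\kappa e^{-\kappa}$.
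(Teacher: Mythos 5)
Your proposal is correct, and its skeleton is the same as the paper's: diagonalize the constant-coefficient operators by the Fourier transform, read off the spectra from the symbols $-|\xi|^2+ic\xi_1$ and $-\epsilon|\xi|^2+ic\xi_1-\kappa e^{-\kappa}$, and transfer to $H^k(\mathbb{R}^d)$ via the conjugation $m\mathcal{F}_1$ from Lemma \ref{lem2.2.1}. Where you differ is in how the two semigroup statements are obtained. The paper proves (1) by citing the proof of Proposition A.1(1) of \cite{GLS2} for bounded semigroup generation on $L^2$, and obtains (3)(b) as a ``direct consequence of (2)'' via Lemma 3.13 of \cite{GLS2}, i.e.\ via a spectral-bound-implies-decay result for this class of semigroups. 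You instead derive both bounds directly from the pointwise estimates $|e^{tm_1(\xi)}|\le 1$ and $|e^{tm_2(\xi)}|\le e^{-\kappa e^{-\kappa}t}$ on the symbols, which makes the argument self-contained, sidesteps any spectral mapping issue, and cleanly covers the degenerate case $\epsilon=0$ through the explicit formula $(e^{t\mathcal{L}^{(2)}}v)(z,y)=e^{-\kappa e^{-\kappa}t}v(z+ct,y)$ --- a case the paper must also worry about since $\mathcal{L}^{(2)}$ is then only a bounded perturbation of transport. One small caveat: your claim that the transfer is isometric, hence $K=1$ in (3)(b), holds only if the $H^k$ norm is taken to be $\|(1+|\xi|^2)^{k/2}\hat u\|_{L^2}$; with the usual derivative-based norm the conjugation is merely an isomorphism, so $K$ is the norm-equivalence constant. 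This does not affect the validity of the lemma, which only asserts the existence of some $K>0$ and $\rho>0$.
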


\begin{proof}
	As in Lemma \ref{lem2.2.1}, we can prove that the operators $\mathcal{L}^{(i)}$, $i=1,2$ have the same spectrum on $H^k(\mathbb{R}^d)$ and on $L^2(\mathbb{R}^d)$. 
	
	Using the Fourier transform, we find that the spectrum of $\mathcal{L}^{(1)}$ on $L^2(\mathbb{R}^d)$ is the union of the curves $\lambda_1(\xi)=-(\xi_1^2+\cdots+\xi_d^2)+ci\xi_1$ for all $\xi=(\xi_1,...,\xi_d)\in\mathbb{R}^d$,  thus $\sup\{\Re\lambda:\lambda\in \operatorname{Sp}(\mathcal{L}^{(1)})\}\leq 0$ on $L^2(\mathbb{R})$ which proves (3)(a). By the proof of Proposition A.1(1) in \cite{GLS2}, the operator $\mathcal{L}^{(1)}$ generates a bounded semigroup on $L^2(\mathbb{R}^d)$. Operators on $H^k(\mathbb{R}^d)$ and $L^2(\mathbb{R}^d)$ associated with the same constant-coefficient differential expression are similar, see \eqref{OPARES}, therefore the semigroup they generate are similar, so (1) is proved. 
	
	The spectrum of $\mathcal{L}^{(2)}$ on $L^2(\mathbb{R}^d)$ is the union of the curves $\lambda_2(\xi)=-\epsilon(\xi_1^2+\cdots+\xi_d^2)+ci\xi_1-\kappa e^{-\kappa}$ for all $\xi=(\xi_1,...,\xi_d)\in\mathbb{R}^d$, and therefore $\sup\{\Re\lambda:\lambda\in \operatorname{Sp}(\mathcal{L}^{(2)})\}< 0$ on $L^2(\mathbb{R}^d)$, also on $H^k(\mathbb{R}^d)$ by Lemma \ref{lem2.2.1}, proving (2).
	
	Assertion (3)(b) is a direct consequence of (2), see \cite{GLS2} Lemma 3.13.
\end{proof}

\begin{remark}\label{rem1}
	To conclude this subection we explain why the end states $\mathbf{u}_-$ and $\mathbf{u}_+$ for the model system
	\begin{equation}
		\begin{cases}u_{1t}(t,\mathbf{x})=\partial_{zz} u_1(t,\bx)+c\partial_z u_1+ u_2(t,\bx) g(u_1(t,\mathbf{x})),\, u_1,u_2\in\mathbb{R},\\
			u_{2t}(t,\bx)=\epsilon\partial_{zz} u_2(t,\bx)+c\partial_z u_2-\kappa u_2(t,\bx) g(u_1(t,\mathbf{x})), \,
			\bx\in\mathbb{R}^d,\end{cases}
	\end{equation}
	where $g$ is defined in \eqref{deng}, were chosen as $\bu_-=(1/\kappa,0)$ and $\bu_+=(0,1)$.
	
	Let $\Phi=(\phi_1,\phi_2)$ be a time-independent solution of the model system so that $\Phi$ satisfies the ODE system
	\begin{equation}\label{eq1.7.2}
		\begin{cases}\partial_{zz} \phi_1( \bx)+c\partial_z \phi_1+\phi_2( \bx) g(\phi_1( \mathbf{x}))=0,\\
			\epsilon\partial_{zz} \phi_2( \bx)+c\partial_z \phi_2-\kappa \phi_2( \bx) g(\phi_1( \mathbf{x}))=0.\end{cases}
	\end{equation} 
	We are interested in solutions of \eqref{eq1.7.2} that satisfy the boundary conditions at $z\rightarrow\pm\infty$,
	\[ 
	(\phi_1,\phi_2)(-\infty)=(\phi_1^{\star},0) , \, (\phi_1,\phi_2)(\infty)=(0,1).
	\]
	Such solutions represent traveling combustion fronts. Here, the left temperature $\phi_1^{\star}$ is an unknown to be determined.
	
	In the ODE system \eqref{eq1.7.2}, we set $\phi_3=\partial_z\phi_1$ and $\phi_4=\partial_z \phi_2$, and also use prime to denote the derivative with respect to $z$, to obtain the following first-order system:
	\begin{equation}\label{eq1.7.3}
		\phi_1'=\phi_3,
	\end{equation} 
	\begin{equation}
		\phi_2'=\phi_4,
	\end{equation}
	\begin{equation}\label{eq1.7.5}
		\phi_3'=-(c\phi_3+\phi_2g(\phi_1)),
	\end{equation}
	\begin{equation}\label{eq1.7.6}
		\phi_4'=-\frac{1}{\epsilon}[c\phi_4-\kappa \phi_2 g(\phi_1)].
	\end{equation}
	By adding \eqref{eq1.7.5} to \eqref{eq1.7.6} multiplied by $\epsilon/\kappa$, we obtain the following equation,
	\begin{equation}
		\phi_1''+c\phi_1'+\frac{\epsilon}{\kappa}\phi_2''+\frac{c}{\kappa}\phi_2'=0.
	\end{equation}
	This expression can be integrated once to produce a function of $z$ that is constant along any traveling wave. We denote this constant by $k$ so that
	\begin{equation}\label{eq1.7.8}
		\phi_3+c\phi_1+\frac{\epsilon}{\kappa}\phi_4+\frac{c}{\kappa}\phi_2=constant:=k.
	\end{equation}
	For the solution that approaches $(\phi_1,\phi_2,\phi_3,\phi_4)=(0,1,0,0)$ as $z\rightarrow\infty$, we must have $k=\frac{c}{\kappa}.$ Substituting $k=\frac{c}{\kappa}$ into equation \eqref{eq1.7.8} we have
	\begin{equation}
		\phi_3=-c\phi_1-\frac{\epsilon}{\kappa}\phi_4-\frac{c}{\kappa}\phi_2+\frac{c}{\kappa}\rightarrow 0,\  \phi_1\rightarrow\phi_1^*,\  \phi_2\rightarrow 0 \ \text{and}\ \phi_4\rightarrow 0\ \text{as }\ z\rightarrow -\infty
	\end{equation}
	since the steady solution of the system \eqref{eq1.7.3}-\eqref{eq1.7.6} approaches $(\phi_1,\phi_2,\phi_3,\phi_4)=(\phi_1^{\star},0,0,0)$ as $z\rightarrow-\infty$. Thus we necessarily have $\phi_1^{\star}=\frac{1}{\kappa}$.
	
\end{remark}

\subsection{Nonlinear terms in the model case}\lb{ssec2}

In this subsection we study the nonlinear terms defined in \eqref{eq2.1.6} and prove the nonlinearity is locally Lipschitz on the intersection space $\cE$.

Recall that we introduced the nonlinear term of system \eqref{eq2.1.7} as in formula \eqref{eq2.1.6}, that is
\begin{align}\label{nonlinear}
	H(\bv(t, \bx))&=f(\bu_-+\bv(t,\bx))-f(\bu_-)-\partial_{\bu}f(\bu_-)v(t,\bx)\no\\
	&=f\left(\begin{pmatrix}
		1/\kappa +v_1 \\ v_2
	\end{pmatrix} \right)-\begin{pmatrix}
		0 \\ 0
	\end{pmatrix}-\begin{pmatrix}
		0 & e^{-\kappa}\\
		0 &-\kappa e^{-\kappa}
	\end{pmatrix}\begin{pmatrix}
		v_1 \\v_2
	\end{pmatrix}\no\\
	&=\begin{pmatrix}
		v_2(e^{-\frac{1}{v_1+1/\kappa}}-e^{-\kappa}) \\ -\kappa v_2(e^{-\frac{1}{v_1+1/\kappa}}-e^{-\kappa})
	\end{pmatrix}.
\end{align}
To obtain the Lipschitz property of the nonliner term on the multidimensional space $H^k(\mathbb{R}^d)$ and $H^k_{\alpha}(\mathbb{R})\otimes H^k(\mathbb{R}^{d-1})$, we need the space $\mathcal{E}$ defined in equation \eqref{spE}.

It will be convenient to write $H(\bv)$ as follows:
\begin{equation}\label{eq2.3.1}
	H(\bv)=\begin{pmatrix}
		1 \\ -\kappa
	\end{pmatrix}\big( g(\frac{1}{\kappa}+v_1)-g(\frac{1}{\kappa}) \big)v_2,
\end{equation}
where $\bv=(v_1,v_2)$ and $g(\cdot)$ is defined as in \eqref{MPr}.

The proofs below will be based on the fact that the Sobolev embedding yields the inequality
\begin{equation}\label{eq2.3.3}
	||uv||_{H^k(\bbR^d)}\leq C||u||_{H^k(\bbR^d)}||v||_{H^k(\bbR^d)}
\end{equation}
for $2k>d$ (see \cite[Theorem 4.39]{AF}). We begin with some elementary facts.

\begin{lemma}\label{soblev}
	Assume that $k\geq[\frac{d+1}{2}]$, and consider $\mathcal{E}_0=H^k(\mathbb{R}^d)$. Then the following assertions hold.
	\begin{itemize}
		\item[(1)] If $u$, $v\in \mathcal{E}_0$, then $uv\in \mathcal{E}_0$, and there exists a constant $C>0$ such that $\|uv\|_{0}\leq C\|u\|_{0}\|v\|_{0}$.
		\item[(2)] If $u$, $v\in \mathcal{E}$, then $uv\in \mathcal{E}_{\alpha}$, and there exists a constant $C>0$ such that $\|uv\|_{\alpha}\leq C\|u\|_{0}\|v\|_{\alpha}$.
		\item[(3)] If $u$, $v\in \mathcal{E}$, then $uv\in \mathcal{E}$, and there exists a constant $C>0$ such that $\|uv\|_{\mathcal{E}}\leq C\|u\|_{\mathcal{E}}\|v\|_{\mathcal{E}}$.
	\end{itemize}
\end{lemma}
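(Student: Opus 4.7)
The plan is to prove all three assertions by reducing to the single ingredient already invoked by the authors, namely the Sobolev multiplication inequality $\|uv\|_{H^k(\mathbb{R}^d)} \le C\|u\|_{H^k(\mathbb{R}^d)}\|v\|_{H^k(\mathbb{R}^d)}$ valid whenever $2k > d$, which is guaranteed by the hypothesis $k \ge [(d+1)/2]$. Part (1) is then literally this inequality, with the constant $C$ being the one from \cite[Theorem 4.39]{AF}. There is nothing to do beyond quoting it.

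For part (2), the key observation is that the weight $\gamma_\alpha(\mathbf{x}) = e^{\alpha z}$ depends only on the spatial variables and can be absorbed into a single factor: $\gamma_\alpha(uv) = u\,(\gamma_\alpha v)$. Therefore
\begin{equation*}
\|uv\|_\alpha = \|\gamma_\alpha(uv)\|_0 = \|u\,(\gamma_\alpha v)\|_0.
\end{equation*}
Since $u \in \mathcal{E} \subset \mathcal{E}_0$ and $v \in \mathcal{E} \subset \mathcal{E}_\alpha$, we have $u \in H^k(\mathbb{R}^d)$ and $\gamma_\alpha v \in H^k(\mathbb{R}^d)$ by definition of $\|\cdot\|_\alpha$. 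Applying the Sobolev multiplication inequality to the pair $(u, \gamma_\alpha v)$ yields
\begin{equation*}
\|u\,(\gamma_\alpha v)\|_0 \le C\|u\|_0\,\|\gamma_\alpha v\|_0 = C\|u\|_0\,\|v\|_\alpha,
\end{equation*}
which establishes (2). Note that we load the weight onto $v$, not $u$, precisely because the hypothesis on $u$ is only unweighted integrability.

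Part (3) is then a direct combination of (1) and (2). By definition $\|uv\|_{\mathcal{E}} = \max\{\|uv\|_0, \|uv\|_\alpha\}$. Bounding the first quantity by (1) and the second by (2) gives
\begin{equation*}
\|uv\|_{\mathcal{E}} \le C\max\{\|u\|_0\|v\|_0,\ \|u\|_0\|v\|_\alpha\} \le C\|u\|_0\max\{\|v\|_0,\|v\|_\alpha\} \le C\|u\|_{\mathcal{E}}\|v\|_{\mathcal{E}},
\end{equation*}
where the final inequality uses $\|u\|_0 \le \|u\|_{\mathcal{E}}$ from the definition of the $\mathcal{E}$ norm.

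There is no real obstacle here; the only thing to be careful about is the asymmetric form of the estimate in (2), where the weighted norm appears on only one side — this is essential to make (3) symmetric in $u$ and $v$ after taking the max. The lemma is effectively a bookkeeping statement ensuring that the intersection space $\mathcal{E}$ inherits an algebra structure from $\mathcal{E}_0$, which will later be used to control the nonlinearity $H(\mathbf{v})$ from \eqref{nonlinear} in the subsequent Lipschitz estimates.
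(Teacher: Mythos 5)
Your proof is correct and follows essentially the same route as the paper: part (1) is quoted directly from the Sobolev multiplication inequality, part (2) absorbs the weight into the second factor and applies the same inequality to the pair $(u,\gamma_\alpha v)$, and part (3) takes the maximum of the two resulting bounds. The additional remarks about the asymmetry in (2) are accurate but do not change the argument.
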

\begin{proof}
	Assertion (1) is in fact the Sobolev embedding inequality \eqref{eq2.3.3}.
	Assertion (2) can be proved by using \eqref{eq2.3.3} since
	\begin{equation*}
		\|uv\|_{\alpha}=\|\gamma_{\alpha}uv\|_{0}\leq C \|u\|_{0}\|\gamma_{\alpha}v\|_{0} = C\|u\|_{0}\|v\|_{\alpha}.
	\end{equation*}
	To show (3), let $u,v\in \mathcal{E}$. Then by (1), \begin{equation*}
		\|uv\|_{0}\leq C\|u\|_{0}\|v\|_{0}\leq C\|u\|_{\mathcal{E}}\|v\|_{\mathcal{E}},
	\end{equation*}
	and by (2), 
	\begin{equation*}
		\|uv\|_{\alpha}\leq C\|u\|_{0}\|v\|_{\alpha}\leq C\|u\|_{\mathcal{E}}\|v\|_{\mathcal{E}}.
	\end{equation*} 
	Therefore $uv\in\mathcal{E}$ and $\|uv\|_{\mathcal{E}}\leq C\|u\|_{\mathcal{E}}\|v\|_{\mathcal{E}}$.
\end{proof}

The nonlinearities of type \eqref{eq2.3.1} is a combination of the Nemytskij-type operator $v_1\mapsto g(1/\kappa+v_1)$ and multiplication operator by $v_2$. In what follows we will need to establish local Lipschitz properties of this and more general operators of the type $v\mapsto m(v(\cdot))v(\cdot)$ where $m(\cdot)$ is a given function and $v\in H^k(\bbR^d)$. The one-dimensional results of this type can be found in \cite[Proposition 7.2]{GLS2}. We presented an analogue of the proof of \cite[Proposition 7.2]{GLS2} in \cite[Appendix A]{GLY}, see Lemma \ref{glya} below.

\begin{lemma}\lb{glya}
	Assume $k\geq[\frac{d+1}{2}]$, and let $m:(q,u)\mapsto m(q,u)\in\mathbb{R}$ be a function from $C^{k+1}(\mathbb{R}^2)$. Consider the formula
	\begin{equation}\label{eq3.13}
		(q(\bx),u(\bx),v(\bx))\mapsto m(q(\bx),u(\bx))v(\bx),
	\end{equation}
	where $q(\cdot)$, $u(\cdot)$, $v(\cdot) :\mathbb{R}^{d}\mapsto \mathbb{R}$, and the variable $\bx=(x_1,...,x_d)\in\mathbb{R}^{d}$.
	\begin{itemize}
		\item[(1)] Formula \eqref{eq3.13} defines a mapping from $H^k(\mathbb{R}^d)\times\mathcal{E}_0^2$ to $\mathcal{E}_0$ that is locally Lipschitz on any set of the form $\{ (q,u,v):\|q\|_{0}+\|u\|_0+\|v\|_0\leq K \}$.
		\item[(2)] Formula \eqref{eq3.13} defines a mapping from $H^k(\mathbb{R}^d)\times\mathcal{E}^2$ to $\mathcal{E}$ that is locally Lipschitz on any set of the form $\{ (q,u,v):\|q\|_{0}+\|u\|_{\mathcal{E}}+\|v\|_{\mathcal{E}}\leq K \}$.
	\end{itemize}
\end{lemma}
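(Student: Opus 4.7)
The plan is to adapt the one-dimensional argument of \cite[Proposition 7.2]{GLS2}, carried out in a multidimensional setting in \cite[Appendix A]{GLY}, to the present setup. The essential analytic inputs are already in place: the Sobolev multiplication inequality \eqref{eq2.3.3} under the standing hypothesis $k\geq[(d+1)/2]$, the Sobolev embedding $H^k(\bbR^d)\hookrightarrow L^\infty(\bbR^d)$, and the identity $\|\cdot\|_\alpha=\|\gamma_\alpha\cdot\|_0$ built into the definition of the weighted space.

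For part (1), I would begin with the telescoping identity
\begin{equation*}
m(q_1,u_1)v_1-m(q_2,u_2)v_2=\bigl[m(q_1,u_1)-m(q_2,u_2)\bigr]v_1+m(q_2,u_2)(v_1-v_2),
\end{equation*}
and apply Lemma~\ref{soblev}(1) to bound each summand by a product of $\cE_0$-norms. This reduces the task to the two Nemytskij-type estimates $\|m(q,u)\|_0\leq C(K)$ and $\|m(q_1,u_1)-m(q_2,u_2)\|_0\leq C(K)\bigl(\|q_1-q_2\|_0+\|u_1-u_2\|_0\bigr)$, valid on the bounded set $\|q\|_0+\|u\|_0\leq K$. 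The first is the standard Moser-type calculus for the substitution operator associated with $m\in C^{k+1}(\bbR^2)$, proved by expanding $D^\beta[m(q,u)]$ via the chain rule for multi-indices $|\beta|\leq k$, applying \eqref{eq2.3.3} to every factor, and using Sobolev embedding for $L^\infty$ control of $q,u$; the constant $m(0,0)$ enters only multiplicatively against an $H^k$ factor in the telescoping, so it causes no trouble. The Lipschitz estimate follows by further writing
\begin{equation*}
m(q_1,u_1)-m(q_2,u_2)=\int_0^1\!\!\bigl[\partial_q m(q_s,u_s)(q_1-q_2)+\partial_u m(q_s,u_s)(u_1-u_2)\bigr]ds,
\end{equation*}
where $(q_s,u_s)=(1-s)(q_2,u_2)+s(q_1,u_1)$, and applying the first estimate to the $C^k$ functions $\partial_q m,\partial_u m$.

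For part (2), the key observation is that the weight enters only through the $v$-slot, since pointwise
\begin{equation*}
\gamma_\alpha\bigl[m(q,u)v\bigr]=m(q,u)\bigl[\gamma_\alpha v\bigr].
\end{equation*}
Hence the $\cE_\alpha$-component of the norm of $m(q,u)v$ equals the $\cE_0$-norm of the product $m(q,u)\cdot(\gamma_\alpha v)$, and $v\in\cE_\alpha$ forces $\gamma_\alpha v\in\cE_0$ with $\|\gamma_\alpha v\|_0=\|v\|_\alpha$. I would then finish by invoking part (1) twice: once with the triple $(q,u,v)$ to control the $\cE_0$-component of the output, and once with the triple $(q,u,\gamma_\alpha v)$ to control the $\cE_\alpha$-component, and combine the two bounds through $\|\cdot\|_\cE=\max\{\|\cdot\|_0,\|\cdot\|_\alpha\}$. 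Note that $q$ and $u$ enter $m(q,u)$ only unweighted, which is why the hypothesis on $q$ in (2) is merely an $\cE_0$-bound.

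The main technical obstacle is the Moser-type Lipschitz estimate for the substitution operator $(q,u)\mapsto m(q,u)$ on $H^k(\bbR^d)$; once that is established, every remaining step reduces to a telescoping argument combined with the Sobolev product inequality \eqref{eq2.3.3}. In the multidimensional regime this Moser estimate rests on the sharp Sobolev algebra property $H^k\cdot H^k\hookrightarrow H^k$, which the assumption $k\geq[(d+1)/2]$ delivers.
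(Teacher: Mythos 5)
Your proposal is correct and follows the same route as the proof the paper relies on: the paper does not prove Lemma \ref{glya} here but imports it from \cite[Appendix A]{GLY} (the multidimensional analogue of \cite[Proposition 7.2]{GLS2}), and your argument --- the telescoping decomposition, the Moser-type composition estimate for $m\in C^{k+1}$ on $H^k(\bbR^d)$ via the Sobolev product inequality \eqref{eq2.3.3}, the integral representation with $\partial_q m,\partial_u m\in C^k$ for the Lipschitz bound, and the observation that $\gamma_\alpha$ commutes past the factor $m(q,u)$ so the weight lands only in the $v$-slot --- is precisely that argument. The one point a written version must keep explicit is that $m(q(\cdot),u(\cdot))$ itself lies in $H^k(\bbR^d)$ only after subtracting the constant $m(0,0)$, so each product estimate is applied to $m(q,u)-m(0,0)$ with the constant term handled separately against $\|v\|_0$ or $\|v\|_\alpha$; you note this, so the proof is complete.
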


By dropping $q$ from Lemma \ref{glya}, we record the following corollary that can be used to study the components of the map $H(\cdot)$ from \eqref{eq2.3.1}.

\begin{corollary}\label{cor2.3.3}
	Let $\cE_0=H^k(\bbR^d)$, and $\cE_{\alpha}$ and $\cE$ be defined accordingly. If $k\geq[\frac{d+1}{2}]$ and $m(\cdot)\in C^{\infty}(\mathbb{R})$, then the formula
	$$v(\bx)\mapsto m(v(\bx))v(\bx),\quad \bx\in\bbR^d,$$
	defines mappings from $\mathcal{E}_0$ to $\mathcal{E}_0$, and from $\mathcal{E}$ to $\mathcal{E}$. The first is locally Lipschitz on any set of the form $\{ v:\|v\|_0\leq K \}$; the second is locally Lipschitz on any set of the form $\{ v:\|v\|_{\mathcal{E}}\leq K \}$.
\end{corollary}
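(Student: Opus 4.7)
The plan is to derive the corollary directly from Lemma \ref{glya} by an appropriate specialization, since Corollary \ref{cor2.3.3} is essentially the case of Lemma \ref{glya} in which the first two arguments of the trilinear-type map are tied together and the dependence on $q$ is trivial. Given $m\in C^{\infty}(\mathbb{R})$, I define the auxiliary function $\tilde{m}:\mathbb{R}^{2}\to\mathbb{R}$ by $\tilde{m}(q,u):=m(u)$, which lies in $C^{\infty}(\mathbb{R}^{2})\subset C^{k+1}(\mathbb{R}^{2})$ as required by the hypothesis of Lemma \ref{glya}. The formula
$$(q(\bx),u(\bx),v(\bx))\mapsto \tilde{m}(q(\bx),u(\bx))v(\bx)=m(u(\bx))v(\bx)$$
is then precisely of the type handled by that lemma, so it yields mappings from $H^{k}(\mathbb{R}^{d})\times\cE_{0}^{2}$ into $\cE_{0}$, and from $H^{k}(\mathbb{R}^{d})\times\cE^{2}$ into $\cE$, which are locally Lipschitz on the balls described there.

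Next I compose this map with the diagonal embedding $v\mapsto(0,v,v)$ into the appropriate product space. In the unweighted case this embedding sends $\{v\in\cE_{0}:\|v\|_{0}\leq K\}$ into $\{(q,u,v):\|q\|_{0}+\|u\|_{0}+\|v\|_{0}\leq 2K\}$ (since $\|0\|_{0}=0$), on which Lemma \ref{glya}(1) provides a Lipschitz constant $L=L(2K)$. In the intersection case the same embedding sends $\{v\in\cE:\|v\|_{\cE}\leq K\}$ into $\{(q,u,v):\|q\|_{0}+\|u\|_{\cE}+\|v\|_{\cE}\leq 2K\}$, on which Lemma \ref{glya}(2) applies. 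The diagonal embedding is itself a bounded linear map (with norm at most $\sqrt{2}$), so the composition inherits local Lipschitz continuity, giving
$$\|m(v_{1})v_{1}-m(v_{2})v_{2}\|_{0}\leq 2L\|v_{1}-v_{2}\|_{0},\qquad \|m(v_{1})v_{1}-m(v_{2})v_{2}\|_{\cE}\leq 2L\|v_{1}-v_{2}\|_{\cE}$$
for $v_{1},v_{2}$ in the respective balls, which is the desired statement.

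I do not expect any substantive obstacle, because all of the analytic work (Taylor expansion of $m$, the Sobolev algebra inequality \eqref{eq2.3.3}, and the bilinear estimates of Lemma \ref{soblev}) has already been absorbed into Lemma \ref{glya}. The only verification needed by hand is that the ``diagonal'' specialization $u=v$ does not push the argument outside the region on which Lemma \ref{glya} is Lipschitz; this is immediate, since one simply loses a factor of two in the radius of the admissible ball. The same reduction simultaneously yields the fact that the formula actually defines a map into $\cE_{0}$ (respectively $\cE$), since Lemma \ref{glya} asserts that its image lies in those spaces to begin with.
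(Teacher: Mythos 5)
Your proposal is correct and takes essentially the same route as the paper, which records the corollary as an immediate specialization of Lemma \ref{glya} obtained by dropping $q$ and identifying the $u$ and $v$ slots. Your explicit treatment of the diagonal embedding $v\mapsto(0,v,v)$ and the resulting doubling of the ball radius is just a more careful write-up of that same reduction.
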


\begin{proposition}\label{lip2}
	Let $\cE_0=H^k(\bbR^d)$, and $\cE_{\alpha}$ and $\cE$ be defined accordingly, let $k\geq[\frac{d+1}{2}]$ and $\bv=\begin{pmatrix}
		v_1 \\ v_2
	\end{pmatrix}$, and consider the formula $$H(\bv)=\begin{pmatrix}
		1 \\ -\kappa
	\end{pmatrix}v_2\big( g(v_1+1/\kappa)-g(1/\kappa) \big)$$
	as given in \eqref{nonlinear}.
	\begin{itemize}
		\item[(1)] $H(\cdot)$ defines a mapping from $\mathcal{E}_0^2$ to $\mathcal{E}_0^2$ that is locally Lipschitz on any set of the form $\{ v: ||v||_0\leq K \}$.
		\item[(2)] $H(\cdot)$ defines a mapping from $\mathcal{E}^2$ to $\mathcal{E}^2$ that is locally Lipschitz on any set of the form $\{ v: ||v||_{\mathcal{E}}\leq K \}$.
	\end{itemize}
\end{proposition}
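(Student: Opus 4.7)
The plan is to reduce $H(\bv)$ to a form already covered by Corollary~\ref{cor2.3.3} together with the product estimates of Lemma~\ref{soblev}. Writing $\tilde{g}(v_1):=g(v_1+1/\kappa)-g(1/\kappa)$, the fundamental theorem of calculus yields
\begin{equation*}
	\tilde{g}(v_1)=v_1\int_0^1 g'(1/\kappa+s v_1)\,ds=:v_1\,m(v_1),
\end{equation*}
and since $g\in C^\infty(\mathbb{R})$, differentiation under the integral sign shows $m\in C^\infty(\mathbb{R})$. Hence each component of $H(\bv)$ is a constant multiple of the product $v_2\cdot\bigl(v_1 m(v_1)\bigr)$, so the vector prefactor $\begin{pmatrix}1\\-\kappa\end{pmatrix}$ contributes only a fixed constant to the final estimate.

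For part (1), I would apply Corollary~\ref{cor2.3.3} to the scalar map $v_1\mapsto v_1 m(v_1)$ to obtain its local Lipschitz property from $\mathcal{E}_0$ to $\mathcal{E}_0$ on every ball $\{v_1:\|v_1\|_0\leq K\}$; denote the resulting Lipschitz constant by $L(K)$. Combining this with Lemma~\ref{soblev}(1) and the standard decomposition
\begin{equation*}
	v_2\,\tilde{g}(v_1)-v_2'\,\tilde{g}(v_1')=(v_2-v_2')\,\tilde{g}(v_1)+v_2'\bigl(\tilde{g}(v_1)-\tilde{g}(v_1')\bigr),
\end{equation*}
I would bound the first summand using the a priori estimate $\|\tilde{g}(v_1)\|_0=\|\tilde{g}(v_1)-\tilde{g}(0)\|_0\leq L(K)\|v_1\|_0$ (noting that $\tilde{g}(0)=0$) and bound the second by the Lipschitz estimate on $\tilde{g}$ itself, producing the overall local Lipschitz constant on $\{\bv:\|\bv\|_0\leq K\}$. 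The membership $H(\bv)\in\mathcal{E}_0^2$ follows immediately from the same product inequality.

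Part (2) is structurally identical. Apply Corollary~\ref{cor2.3.3} to get the local Lipschitz property of $v_1\mapsto v_1 m(v_1)$ from $\mathcal{E}$ to $\mathcal{E}$, then close the argument with Lemma~\ref{soblev}(3), which supplies $\|uv\|_{\mathcal{E}}\leq C\|u\|_{\mathcal{E}}\|v\|_{\mathcal{E}}$; nothing new is required beyond replacing $\|\cdot\|_0$ by $\|\cdot\|_{\mathcal{E}}$ throughout the decomposition above. I do not anticipate a serious obstacle: the only subtle point is the verification that $m$ is genuinely smooth, which is routine since for every $j$ the derivative $g^{(j)}$ is continuous and bounded on compact sets, so differentiation under the integral sign is legitimate. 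The remainder is bookkeeping with constants.
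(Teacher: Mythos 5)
Your proof is correct, and it follows the same basic strategy as the paper --- reduce the nonlinearity to a Nemytskij-type operator covered by Corollary~\ref{cor2.3.3} and close with the Sobolev product estimate --- but you execute it more explicitly, and in one respect more carefully, than the paper does. The paper's own proof is a one-liner: it sets $m(v_1)=g(v_1+1/\kappa)-g(1/\kappa)$, notes that $m$ is smooth and bounded, and ``applies Corollary~\ref{cor2.3.3} to the components of $H$.'' Read literally, that corollary only covers maps of the form $v\mapsto m(v)v$ with a \emph{single} argument, whereas the components of $H$ have the form $m(v_1)v_2$ with two distinct functions; the reference that actually fits is Lemma~\ref{glya} with $q$ dropped, i.e.\ $(u,v)\mapsto m(u)v$. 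Your telescoping decomposition $v_2\,\tilde{g}(v_1)-v_2'\,\tilde{g}(v_1')=(v_2-v_2')\,\tilde{g}(v_1)+v_2'\bigl(\tilde{g}(v_1)-\tilde{g}(v_1')\bigr)$, combined with Lemma~\ref{soblev}(1) and (3), supplies exactly the bilinear bookkeeping that the paper leaves implicit, so your argument is self-contained where the paper's citation is slightly loose. Your additional factorization $\tilde{g}(v_1)=v_1\int_0^1 g'(1/\kappa+sv_1)\,ds$ is not needed for the Lipschitz property itself (smoothness and boundedness of $\tilde g$ together with $\tilde g(0)=0$ already suffice), but it is harmless and is in fact the device the paper deploys in the next result, Proposition~\ref{lip3}, to extract the quadratic bound $\|H(\bv)\|_{\alpha}\leq C_K\|\bv\|_0\|\bv\|_{\alpha}$. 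No gaps.
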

\begin{proof}
	It can be shown that $g(\cdot)\in C^{\infty}(\mathbb{R})$ is a smooth bounded function. Let 
	\begin{equation*}
		m(v_1)=g(v_1+1/\kappa)-g(1/\kappa),
	\end{equation*}then $m(\cdot)$ is also a smooth and bounded function. By applying Corollary \ref{cor2.3.3} to the components of the vector-valued map $H$, we finish the prooof.
\end{proof}

\begin{proposition}\label{lip3}
	Let $\cE_0=H^k(\bbR^d)$, let $\cE_{\alpha}$ and $\cE$ be defined accordingly, let $k\geq[\frac{d+1}{2}]$ and $\bv=\begin{pmatrix}
		v_1 \\ v_2
	\end{pmatrix}$, and consider the formula $$H(\bv)=\begin{pmatrix}
		1 \\ -\kappa
	\end{pmatrix}v_2\big( g(v_1+1/\kappa)-g(1/\kappa) \big)$$
	given in \eqref{nonlinear}.
	\begin{itemize}
		\item[(1)] If $\bv\in \mathcal{E}^2$, then there exist a constant $C_K>0$ such that $$||H(\bv)||_{\alpha}\leq C_K||\bv||_0||\bv||_{\alpha}$$ on any set of the form $\{ v: ||\bv||_{\mathcal{E}}\leq K \}$.
		\item[(2)] If $\bv\in \mathcal{E}^2$, then there exist a constant $C_K>0$ such that $$||H(\bv)||_{\mathcal{E}}\leq C_K||\bv||_{\mathcal{E}}^2$$ on any set of the form $\{v: ||\bv||_{\mathcal{E}}\leq K \}$.
	\end{itemize} 
\end{proposition}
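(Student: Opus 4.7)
The plan is to exploit the fact that $H$ vanishes to second order at $\bv=0$ by factoring out an explicit $v_1$ from the Nemytskij factor, and then distribute the weight and Sobolev multiplication bounds of Lemma \ref{soblev} across the resulting trilinear expression in such a way that one factor of $v_2$ always carries the weight and one factor of $v_1$ always carries an unweighted $\|\cdot\|_0$ norm. This will yield the desired quadratic estimates because $H$ is essentially a product of $v_1$, $v_2$, and a bounded smooth Nemytskij term.

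More concretely, the first step is to write
\begin{equation*}
g(v_1+1/\kappa)-g(1/\kappa)=v_1\,N(v_1),\qquad N(s):=\int_0^1 g'(1/\kappa+\tau s)\,d\tau,
\end{equation*}
which is valid because $g\in C^\infty(\bbR)$ (the standard flat-exponential construction), so $N\in C^\infty(\bbR)$ as well. Substituting this into \eqref{nonlinear} gives the trilinear representation
\begin{equation*}
H(\bv)=\begin{pmatrix}1\\-\kappa\end{pmatrix}\,v_1\,v_2\,N(v_1).
\end{equation*}
The Nemytskij piece $w\mapsto w\,N(w)$ is precisely of the form considered in Corollary \ref{cor2.3.3}, so it maps $\cE_0$ to $\cE_0$ and is locally Lipschitz, vanishing at $w=0$. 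Consequently, on any ball $\{\|\bv\|_\cE\leq K\}$ (which in particular controls $\|v_1\|_0$) there is a constant $C_K$ with $\|v_1\,N(v_1)\|_0\leq C_K\|v_1\|_0$.

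For part (1), the estimate is then obtained by applying Lemma \ref{soblev}(2) so that the weight lands on $v_2$ while the Nemytskij cluster stays in the unweighted norm:
\begin{equation*}
\|H(\bv)\|_\alpha \leq C\,\|\,(v_1 N(v_1))\,v_2\,\|_\alpha \leq C\,\|v_1 N(v_1)\|_0\,\|v_2\|_\alpha \leq C\,C_K\,\|v_1\|_0\,\|v_2\|_\alpha \leq C_K'\,\|\bv\|_0\,\|\bv\|_\alpha.
\end{equation*}
For part (2), we pair this with the analogous unweighted bound obtained from Lemma \ref{soblev}(1),
\begin{equation*}
\|H(\bv)\|_0 \leq C\,\|v_2\|_0\,\|v_1 N(v_1)\|_0 \leq C\,C_K\,\|v_2\|_0\,\|v_1\|_0,
\end{equation*}
and observe that each of $\|\bv\|_0$, $\|\bv\|_\alpha$ is bounded by $\|\bv\|_\cE$. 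Taking the maximum of the two bounds and using the definition $\|\cdot\|_\cE=\max\{\|\cdot\|_0,\|\cdot\|_\alpha\}$ yields $\|H(\bv)\|_\cE\leq C_K\|\bv\|_\cE^2$.

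The only real subtlety will be verifying the bound $\|v_1 N(v_1)\|_0\leq C_K\|v_1\|_0$: strictly speaking, Corollary \ref{cor2.3.3} gives \emph{local Lipschitz continuity} of the map $w\mapsto w N(w)$ on $\{\|w\|_0\leq K\}$, and since this map sends $0$ to $0$, applying the Lipschitz inequality with reference point $w=0$ converts it into exactly the quantitative bound we need. An entirely parallel argument controls the $\cE$-norm version used inside part (2). No new ingredients beyond Lemma \ref{soblev} and Corollary \ref{cor2.3.3} are required, so the whole proof is essentially a careful bookkeeping exercise once the factorization above is in place.
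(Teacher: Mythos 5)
Your proof is correct and follows essentially the same route as the paper: both factor $g(v_1+1/\kappa)-g(1/\kappa)=v_1\int_0^1 g'(1/\kappa+\tau v_1)\,d\tau$, invoke Corollary \ref{cor2.3.3} (with the Lipschitz bound anchored at $0$) to get $\|v_1 N(v_1)\|_0\leq C_K\|v_1\|_0$, and then place the weight on $v_2$ via Lemma \ref{soblev} before taking the maximum of the two norms. Your explicit remark about why the local Lipschitz property yields the quantitative bound is a point the paper leaves implicit, but the argument is the same.
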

\begin{proof}
	Note that 
	\begin{equation*}
		g(v_1+1/\kappa)-g(1/\kappa)=\int_0^tg'(1/\kappa+tv_1)dt\,v_1.
	\end{equation*}
	Since $\int_0^tg'(1/\kappa+tv_1)dt$ is also a smooth function, by Corollary \ref{cor2.3.3}, we have
	\begin{equation*}
		\|g(v_1+1/\kappa)-g(1/\kappa)\|_0\leq C_K\|v_1\|_0.
	\end{equation*}
	Also note that $\|v_1\|_0\leq \|\bv\|_0$ and $\|v_2\|_{\alpha}\leq \|\bv\|_{\alpha}$ because $v_1$ and $v_2$ are components of the vector $\bv$. Then (1) holds since
	\begin{align*}
		\|H(\bv)\|_{\alpha}
		=\|\gamma_{\alpha}H(\bv)\|_0&=\left\| \gamma_{\alpha}\begin{pmatrix}
			1 \\ -\kappa
		\end{pmatrix}v_2\big( g(v_1+1/\kappa)-g(1/\kappa) \big) \right\|_0\\
		&\leq C\|\big( g(v_1+1/\kappa)-g(1/\kappa) \big)\|_0\|\gamma_{\alpha}v_2\|_0\\
		&\leq C_K\|v_1\|_0\|v_2\|_{\alpha}\leq C_K||\bv||_0||\bv||_{\alpha}.
	\end{align*}
	Similarly, by using the fact that $\|v_2\|_0\leq \|\bv\|_0$, we have
	\begin{align*}
		\|H(\bv)\|_{0}
		&=\left\| \begin{pmatrix}
			1 \\ -\kappa
		\end{pmatrix}v_2\big( g(v_1+1/\kappa)-g(1/\kappa) \big) \right\|_0\\
		&\leq C\|\big( g(v_1+1/\kappa)-g(1/\kappa) \big)\|_0\|v_2\|_0\\
		&\leq C_K\|v_1\|_0\|v_2\|_{0}\leq C_K||\bv||_0||\bv||_{0},
	\end{align*}
	thus,
	\begin{align*}
		||H(\bv)||_{\mathcal{E}}&=\max\{ ||H(\bv)||_0,||H(\bv)||_{\alpha} \}\\
		&\leq\max\{ C_K||\bv||_0||\bv||_0,C_K||\bv||_0||\bv||_{\alpha} \}\\
		&\leq C_K||\bv||_{\mathcal{E}}||\bv||_{\mathcal{E}},
	\end{align*}
	and (2) is proved. 
\end{proof}

\subsection{Stability of the end state of the planar front in the model case}\lb{ssec3}
In this subsection we prove the stability of the end state $\bu_-=(1/\kappa, 0)$ of \eqref{eq2.1.3}.
By Proposition \ref{pro2.2.3} and Proposition \ref{lip2}, we know that, given initial data $\bv^0\in \mathcal{E}^2$, the system \eqref{eq2.1.7} has a unique mild solution $\bv(t,\bv^0)$. The solution is defined for $t\in[0, t_{max}(v))$ where $0<t_{max}(v))\leq \infty$; see, e.g., \cite[Theorem 6.1.4]{pazy}. The set $\{ (t,\bv^0)\in \mathbb{R}_+\times \mathcal{E}^2: 0\leq t < t_{max}(\bv))\}$ is open in $\mathbb{R}_+\times \mathcal{E}^2$, and the map $(t,\bv^0)\mapsto \bv(t,\bv^0)$ from this set to $\mathcal{E}^2$ is continuous; see, e.g., \cite[Theorem 46.4]{Sell}. We summarize these facts as follows.
\begin{proposition}\label{prop2.4.1}
	Let $\cE_0=H^k(\bbR^d)$ with $k\geq [\frac{d+1}{2}]$. For each $\delta >0$, if $0<\gamma <\delta$, then there exists $T(\gamma,\delta)$ depending on $\gamma$ and $\delta$, with $0<T(\gamma,\delta)\leq\infty$, such that the following is true: if $\bv^{0}\in \mathcal{E}^2$ satisfies
	\begin{equation}\label{eq1.15}
		||\bv^{0}||_{\mathcal{E}}\leq\gamma
	\end{equation}
	and $0\leq t<T$, then the solution $\bv(t)\in \mathcal{E}^2$ of \eqref{eq2.1.7} is defined and satisfies 
	\begin{equation}\label{eq1.16}
		||\bv(t)||_{\mathcal{E}}\leq \delta.
	\end{equation}
\end{proposition}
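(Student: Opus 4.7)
The proposition is essentially a packaging of local well-posedness and continuous dependence for the semilinear equation \eqref{eq2.1.7} on the intersection space $\mathcal{E}^2$, and I would prove it by combining the semigroup and nonlinear bounds already established with a standard Gronwall bootstrap. First, I would observe that $\mathcal{L}_{\mathcal{E}}$ generates a $C_0$-semigroup on $\mathcal{E}^2$: Proposition \ref{pro2.2.3} supplies the exponentially decaying semigroup $\|e^{t\mathcal{L}_\alpha}\|_{\mathcal{E}_\alpha^2\to\mathcal{E}_\alpha^2}\leq Ke^{-\nu t}$ on the weighted factor, while on the unweighted factor the triangular decomposition \eqref{trs} reduces matters, via Lemma \ref{lem2.2.4} and a bounded off-diagonal perturbation by the constant $e^{-\kappa}$, to a bound of the form $\|e^{t\mathcal{L}}\|_{\mathcal{E}_0^2\to\mathcal{E}_0^2}\leq Me^{\omega t}$. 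Taking the max of the two norms gives $\|e^{t\mathcal{L}_{\mathcal{E}}}\|_{\mathcal{E}^2\to\mathcal{E}^2}\leq \tilde M e^{\tilde\omega t}$ for suitable constants $\tilde M\geq 1$, $\tilde\omega\geq 0$. Combined with the local Lipschitz property of $H$ on $\mathcal{E}^2$ from Proposition \ref{lip2}(2), \cite[Theorem 6.1.4]{pazy} then produces the unique mild solution $\bv(\cdot,\bv^0)$ on a maximal interval $[0,t_{\max}(\bv^0))$, and \cite[Theorem 46.4]{Sell} yields joint continuity of the flow on its (open) domain.

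Second, I would extract a quantitative $\mathcal{E}$-norm bound from the variation of constants formula
\[
\bv(t)=e^{t\mathcal{L}_{\mathcal{E}}}\bv^0+\int_0^t e^{(t-s)\mathcal{L}_{\mathcal{E}}}H(\bv(s))\,ds.
\]
Applying the quadratic nonlinear estimate $\|H(\bv)\|_{\mathcal{E}}\leq C_\delta\|\bv\|_{\mathcal{E}}^2$ from Proposition \ref{lip3}(2), then as long as $\|\bv(s)\|_{\mathcal{E}}\leq \delta$ on $[0,t]$ one obtains
\[
\|\bv(t)\|_{\mathcal{E}}\leq \tilde M e^{\tilde\omega t}\gamma+\tilde M C_\delta\delta\int_0^t e^{\tilde\omega(t-s)}\|\bv(s)\|_{\mathcal{E}}\,ds,
\]
and Gronwall's inequality gives $\|\bv(t)\|_{\mathcal{E}}\leq \tilde M\gamma\,e^{(\tilde\omega+\tilde M C_\delta\delta)t}$. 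Defining
\[
T^*=\sup\bigl\{t\in[0,t_{\max}(\bv^0))\;:\;\|\bv(s)\|_{\mathcal{E}}\leq \delta\text{ for all }s\in[0,t]\bigr\},
\]
continuity of $s\mapsto\|\bv(s)\|_{\mathcal{E}}$ together with $\|\bv(0)\|_{\mathcal{E}}\leq\gamma<\delta$ ensures $T^*>0$. The Gronwall estimate then pins down a lower bound for $T^*$ in terms of $\gamma$ and $\delta$ alone, and I would take $T(\gamma,\delta)$ to be this lower bound (or $\infty$ in the trivial case when the exponential bound never reaches $\delta$).

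\textbf{Main obstacle.} The slightly delicate point is precisely the uniformity of $T$ over initial data in the closed $\gamma$-ball: continuity of the flow gives, \emph{a priori}, only a time $T(\bv^0)>0$ for each individual initial datum, and the $\gamma$-ball is not compact in the infinite-dimensional space $\mathcal{E}^2$. The role of the mild formula coupled with the \emph{quadratic} (not merely Lipschitz) $\mathcal{E}$-norm bound from Proposition \ref{lip3}(2) is exactly to let the Gronwall bootstrap close with $T$ depending only on $\gamma$ and $\delta$; everything else is standard semilinear semigroup theory applied to the ingredients already assembled in Section \ref{sec2}.
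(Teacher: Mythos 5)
Your proposal is correct, and it actually supplies more than the paper does: the paper offers no written proof of Proposition \ref{prop2.4.1} at all, but simply cites \cite[Theorem 6.1.4]{pazy} for existence of the mild solution on a maximal interval and \cite[Theorem 46.4]{Sell} for continuity of the flow, and then ``summarizes these facts'' as the proposition. You invoke the same two references and the same ingredients (the semigroup bounds from Proposition \ref{pro2.2.3} and Lemma \ref{lem2.2.4} via the triangular structure \eqref{trs}, and the local Lipschitz/quadratic bounds from Propositions \ref{lip2} and \ref{lip3}), so the route is essentially the paper's; what you add is an explicit Gronwall bootstrap on the mild formulation that makes the uniformity of $T(\gamma,\delta)$ over the ball $\{\|\bv^0\|_{\mathcal E}\leq\gamma\}$ visible, which is exactly the point the paper leaves implicit (in the standard contraction-mapping proof of \cite[Theorem 6.1.4]{pazy} the existence time depends only on the norm of the initial datum and the local Lipschitz constant, which is why the paper can get away with the citation).

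One small caveat in your closing step: the Gronwall bound $\|\bv(t)\|_{\mathcal E}\leq \tilde M\gamma\, e^{(\tilde\omega+\tilde M C_\delta\delta)t}$ only yields a positive uniform $T$ with the conclusion $\|\bv(t)\|_{\mathcal E}\leq\delta$ when $\tilde M\gamma<\delta$, so if you insist on allowing every $\gamma<\delta$ you need $\tilde M=1$. That is available here: the diagonal part of $L$ generates a (quasi-)contraction semigroup on each factor (its Fourier symbol has nonpositive real part on $\mathcal E_0$ and strictly negative real part on $\mathcal E_\alpha$), and the off-diagonal entry $e^{-\kappa}$ is a bounded perturbation, so $\|e^{t\mathcal L}\|\leq e^{t e^{-\kappa}}$ and likewise on the weighted space; with $\tilde M=1$ your argument closes for all $0<\gamma<\delta$. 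Alternatively, for the way the proposition is actually used in the bootstrap of Theorem \ref{th2.1}, $\gamma$ is always taken smaller than $\delta/(2C_{\delta_0})$, so the edge case $\tilde M\gamma\geq\delta$ never arises. Either remark would make your write-up airtight.
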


We can then prove the following proposition which shows that $\bv(t,\bv^0)\in\mathcal{E}^2$ is exponentially decaying in the weighted norm given $\bv^0$ is small in $\mathcal{E}^2$. We first establish exponential decay of the solutions of \eqref{eq2.1.7} on $H_{\alpha}^{k}(\mathbb{R}^d)^{2}$.
\begin{proposition}\label{prop2.4.2}
	Let $\mathcal{E}_0=H^k(\mathbb{R}^d)$ with $k\geq [\frac{d+1}{2}]$. Choose $\nu>0$ as in Proposition \ref{pro2.2.3}. Then there exist $\delta_1>0$, and $K_1>0$ such that for every $\delta\in(0,\delta_1)$ and every $\gamma$ with $0<\gamma<\delta$, the following is true: Let $\bv^{0}\in \mathcal{E}^2$ satisfies \eqref{eq1.15} so that $\bv(t)$ satisfies \eqref{eq1.16} for $0\leq t<T(\delta,\gamma) $. Then
	\begin{equation}
		||\bv(t)||_{\alpha}\leq K_1e^{-\nu t}||\bv^{0}||_{\alpha} \mbox{ \, for\,  }  0\leq t<T(\delta,\gamma).
	\end{equation}
\end{proposition}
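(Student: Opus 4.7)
The plan is to combine the exponential decay of the semigroup $e^{t\mathcal{L}_\alpha}$ on $\mathcal{E}_\alpha^2$ from Proposition \ref{pro2.2.3} with the bilinear estimate $\|H(\bv)\|_\alpha \le C_K\|\bv\|_0\|\bv\|_\alpha$ from Proposition \ref{lip3}(1), and then close the argument with Gronwall's inequality, using the smallness of $\|\bv(s)\|_0$ on the time interval $[0,T(\delta,\gamma))$ provided by Proposition \ref{prop2.4.1} to prevent the nonlinear term from destroying the decay.

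First I would write the solution of \eqref{eq2.1.7}, which exists as a mild solution in $\mathcal{E}^2\subset \mathcal{E}_\alpha^2$ by Proposition \ref{prop2.4.1}, via the Duhamel formula in the weighted space:
\begin{equation*}
\bv(t) = e^{t\mathcal{L}_\alpha}\bv^0 + \int_0^t e^{(t-s)\mathcal{L}_\alpha}H(\bv(s))\,ds.
\end{equation*}
Applying Proposition \ref{pro2.2.3} to the linear factor and Proposition \ref{lip3}(1) to the nonlinearity, together with the a priori bound $\|\bv(s)\|_0\le\|\bv(s)\|_{\mathcal{E}}\le \delta$ supplied by \eqref{eq1.16}, yields
\begin{equation*}
\|\bv(t)\|_\alpha \le Ke^{-\nu t}\|\bv^0\|_\alpha + KC_K\delta\int_0^t e^{-\nu(t-s)}\|\bv(s)\|_\alpha\,ds.
\end{equation*}

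Multiplying by $e^{\nu t}$ and invoking Gronwall's inequality on the function $s\mapsto e^{\nu s}\|\bv(s)\|_\alpha$ produces
\begin{equation*}
\|\bv(t)\|_\alpha \le K\|\bv^0\|_\alpha\, e^{-(\nu - KC_K\delta)t}.
\end{equation*}
To recover the decay rate $\nu$ declared in the statement, I would exploit the strict spectral gap $\sup\{\Re\lambda:\lambda\in\operatorname{Sp}(\mathcal{L}_\alpha)\}<-\nu$ from Proposition \ref{pro2.2.3}: choose in the semigroup bound a slightly larger exponent $\tilde\nu>\nu$ admissible by the spectral mapping and Hille--Yosida estimates, then fix $\delta_1>0$ so small that $\tilde\nu-KC_K\delta\ge \nu$ whenever $0<\delta<\delta_1$. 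Setting $K_1$ equal to the resulting constant finishes the proof.

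The main obstacle, and the only nontrivial point, is exactly this last step: Gronwall produces a decay exponent that has been shrunk by a term proportional to $\delta$, so one must budget the spectral slack carefully to recover the stated rate $\nu$. Everything else is routine: well-posedness is already in Proposition \ref{prop2.4.1}, the nonlinear bound is the key ingredient of Proposition \ref{lip3}(1) and uses the "product--triangle" structure of $H$, and the fact that the mild solution computed in $\mathcal{E}^2$ also satisfies the weighted Duhamel identity follows from the continuous inclusion $\mathcal{E}\subset\mathcal{E}_\alpha$ together with $H(\bv(s))\in\mathcal{E}^2\subset\mathcal{E}_\alpha^2$ along the orbit.
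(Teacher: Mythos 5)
Your proposal is correct and follows essentially the same route as the paper: Duhamel in the weighted space (justified by passing from $\mathcal{L}_{\mathcal{E}}$ to $\mathcal{L}_{\alpha}$ since the orbit stays in $\mathcal{E}^2$), the bilinear bound $\|H(\bv)\|_{\alpha}\le C_K\|\bv\|_0\|\bv\|_{\alpha}$ from Proposition \ref{lip3}(1) together with the a priori bound $\|\bv(s)\|_0\le\delta$, and Gronwall, with the decay rate recovered exactly as you describe by taking the semigroup exponent $\bar\nu>\nu$ from the spectral gap and shrinking $\delta_1$ so that $\bar\nu-K_1C_K\delta\ge\nu$. No gaps.
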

\begin{proof}
     Because $\mathbf{v}(t)$ is a mild solution of \eqref{eq2.1.7} on $\mathcal{E}^{2}$, it satisfies the integral equation
     \begin{equation}\lb{int1}
     	\mathbf{v}(t)=e^{t \mathcal{L}_{\varepsilon}} \mathbf{v}^{0}+\int_{0}^{t} e^{(t-s) \mathcal{L}_{\mathcal{E}}} N(\mathbf{v}(s)) \mathbf{v}(s) d s .
     \end{equation}
     Since $\mathbf{v}^{0} \in \mathcal{E}^{2}$ by assumption, it is clear that $N(\mathbf{v}) \mathbf{v}$ is in $H_{\alpha}^{k}(\mathbb{R}^d)^{2}$ by Proposition \ref{lip3}, so we have
     $$
     e^{t \mathcal{L}_{\mathcal{E}}} \mathbf{v}^{0}=e^{t \mathcal{L}_{\alpha}} \mathbf{v}^{0} \text { and } e^{(t-s) \mathcal{L}_{\mathcal{E}}} N(\mathbf{v}(s)) \mathbf{v}(s)=e^{(t-s) \mathcal{L}_{\alpha}} N(\mathbf{v}(s)) \mathbf{v}(s)
     $$
     Next, we replace $\mathcal{L}_{\mathcal{E}}$ by $\mathcal{L}_{\alpha}$ in \eqref{int1} and choose $\bar{\nu}>\nu>0$ such that
     $$
     \sup \left\{\operatorname{Re} \lambda: \lambda \in \operatorname{Sp}\left(\mathcal{L}_{\alpha}\right)\right\}<-\bar{\nu}:=-k \nu,
     $$
     for some $k=\bar{\nu} / \nu>1$ and close to 1. There exists $K_{1}>0$ such that $\left\|e^{t \mathcal{L}_{\alpha}}\right\|_{\cE^2\rightarrow \cE^2} \leqslant$ $K_{1} e^{-\bar{\nu} t}$ for all $t \geqslant 0$ by Proposition \ref{pro2.2.3}.
     
     Pick any $\delta^{\prime}>0$. For any $\gamma$ so that $0<\gamma<\delta^{\prime}$, if $\left\|\mathbf{v}^{0}\right\|_{\mathcal{E}}<\gamma$. Then $\|\mathbf{v}(s)\|_{\mathcal{E}}<\delta^{\prime}$ for all $s \in\left(0, T\left(\delta^{\prime}, \gamma\right)\right)$ by Proposition \ref{prop2.4.1}.
     
     With the aid of Propostion \ref{lip3}.(1), there exists a constant $C_{\delta^{\prime}}>0$ depending on $\delta^{\prime}$ such that for $\|\mathbf{v}(s)\|_{\mathcal{E}} \leqslant \delta^{\prime}$ when $s \in\left(0, T\left(\delta^{\prime}, \gamma\right)\right)$, it follows that
     $$
     \|\mathbf{v}(t)\|_{\alpha} \leqslant K_{1} e^{-\bar{\nu} t}\left\|\mathbf{v}^{0}\right\|_{\alpha}+\int_{0}^{t} K_{1} e^{-\bar{\nu}(t-s)} C_{\delta^{\prime}}\|\mathbf{v}(s)\|_{0}\|\mathbf{v}(s)\|_{\alpha} d s
     $$
     For each $\delta<\delta^{\prime}$, and $0<\gamma<\delta$, if $\left\|\mathbf{v}^{0}\right\|_{\mathcal{E}}<\gamma$, then $\|\mathbf{v}(s)\|_{\mathcal{E}}<\delta$ for all $s \in$ $(0, T(\delta, \gamma))$ by Proposition \ref{prop2.4.1} again, then
     $$
     \|\mathbf{v}(t)\|_{\alpha} \leqslant K_{1} e^{-\bar{\nu} t}\left\|\mathbf{v}^{0}\right\|_{\alpha}+K_{1} C_{\delta^{\prime}} \delta \int_{0}^{t} e^{-\bar{\nu}(t-s)}\|\mathbf{v}(s)\|_{\alpha} d s
     $$
     Applying Gronwall's inequality for the function $e^{\bar{\nu} t}|| \mathbf{v}(t) \|_{\alpha}$, we can obtain that the inequality
     $$
     e^{\bar{\nu} t}\|\mathbf{v}(t)\|_{\alpha} \leqslant K_{1}\left\|\mathbf{v}^{0}\right\|_{\alpha}+K_{1} C_{\delta^{\prime}} \delta \int_{0}^{t} e^{\bar{\nu} s}\|\mathbf{v}(s)\|_{\alpha} d s
     $$
     implies, by Gronwall's inequality again, that
     $$
     \|\mathbf{v}(t)\|_{\alpha} \leqslant K_{1}\left\|\mathbf{v}^{0}\right\|_{\alpha} e^{K_{1} C_{\delta^{\prime}} \delta t-\bar{\nu} t} .
     $$
     By choosing $\delta_{1}<\min \left\{\delta^{\prime},(k-1) \frac{\nu}{K_{1} C_{\delta^{\prime}}}\right\}$, we can conclude that (2.33) holds for any $\delta \in\left(0, \delta_{1}\right)$.
\end{proof}

On the unweighted space $\mathcal{E}_0^2=H^k(\bbR^d)^2$, we rewrite system \eqref{eq2.1.7} as follows, 
\begin{align}
	\lb{sep1}&v_{1t}=L^{(1)} v_1+e^{-\kappa} v_2+H_0(\bv),\\
	\lb{sep2}&v_{2t}=L^{(2)} v_2-\kappa H_0(\bv),
\end{align}
where $H_0(\bv)=v_2\big(g(v_1+1/\kappa)-g(v_1)\big)$. Using Proposition \ref{lip2} and Propostion \ref{lip3}, we conclude that $H_0(\cdot)$ defines a mapping from $\mathcal{E}_0^2$ to $\mathcal{E}_0$ that is locally Lipschitz on any set of the form $\{ \bv:\|\bv\|_0\leq K \}$ and $||H_0(\bv)||_{0} \leq C_{K}||\bv||_{0}^2.$
Therefore, we obtain the following estimate.

\begin{proposition}\label{prop2.4.3}
	Let $\mathcal{E}_0=H^k(\mathbb{R}^d)$ with $k\geq [\frac{d+1}{2}]$. Choose $\rho>0$ as in Lemma \ref{lem2.2.4}.(3b), and $\delta_1$ be given by Proposition \ref{prop2.4.2}. Assume that $\nu<\rho$ where $\nu$ are chosen as in Proposition \ref{pro2.2.3}. Then there exist $\delta_2\in (0,\delta_{1})$ and $C_1>0$ such that for every $\delta\in(0,\delta_2)$ and every $\gamma$ with $0<\gamma<\delta$, the following is true: If $0\leq t<T(\delta,\gamma)$, and $\bv^0\in\mathcal{E}^2$ satisfies  \eqref{eq1.15}, so that the solution $\bv(t)\in\mathcal{E}^2$ of \eqref{eq2.1.7} satisfies \eqref{eq1.16}, then the following estimates hold: 
	\begin{align}
		\lb{estv1}&||v_1(t)||_{0}\leq C_1||\bv^{0}||_{\mathcal{E}},\\
		\lb{estv2}&||v_2(t)||_{0}\leq C_1e^{-\rho t}||\bv^{0}||_{\mathcal{E}}.
	\end{align}
\end{proposition}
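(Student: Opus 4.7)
The plan is to exploit the triangular structure \eqref{trs} of the linear part and the quadratic nature of $H_0$ in order to treat the two components of $\bv$ in sequence. Passing to the Duhamel representations for \eqref{sep1}--\eqref{sep2} gives
\begin{align*}
v_2(t) &= e^{t\mathcal{L}^{(2)}}v_2^0 - \kappa\int_0^t e^{(t-s)\mathcal{L}^{(2)}}H_0(\bv(s))\,ds, \\
v_1(t) &= e^{t\mathcal{L}^{(1)}}v_1^0 + \int_0^t e^{(t-s)\mathcal{L}^{(1)}}\bigl(e^{-\kappa}v_2(s) + H_0(\bv(s))\bigr)\,ds.
\end{align*}
The key nonlinear ingredient will be the bound $\|H_0(\bv)\|_0 \leq C_K\|v_1\|_0\|v_2\|_0$, which follows from the Sobolev algebra of Lemma \ref{soblev} applied to the explicit factor $v_2$ times the Lipschitz-in-$v_1$ expression $g(v_1+1/\kappa)-g(1/\kappa)$, whose $\mathcal{E}_0$-norm is controlled by $\|v_1\|_0$ via Corollary \ref{cor2.3.3}. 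The crucial asymmetry is that $e^{t\mathcal{L}^{(2)}}$ decays exponentially by Lemma \ref{lem2.2.4}(3b), whereas $e^{t\mathcal{L}^{(1)}}$ is only uniformly bounded by Lemma \ref{lem2.2.4}(1).

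\emph{Step 1: proving \eqref{estv2}.} Since the spectral bound $\sup\{\Re\lambda:\lambda\in\operatorname{Sp}(\mathcal{L}^{(2)})\}<0$ in Lemma \ref{lem2.2.4}(2) is strict, I can enlarge the rate in Lemma \ref{lem2.2.4}(3b) to some $\bar\rho>\rho$ with a (possibly larger) constant $\bar K$. Inserting this into the Duhamel formula for $v_2$, using $\|v_1(s)\|_0\leq\|\bv(s)\|_{\mathcal{E}}\leq\delta$, and multiplying by $e^{\bar\rho t}$ yields
\[
e^{\bar\rho t}\|v_2(t)\|_0 \leq \bar K\|v_2^0\|_0 + \kappa\bar K C_K\delta\int_0^t e^{\bar\rho s}\|v_2(s)\|_0\,ds.
\]
Gronwall then delivers $\|v_2(t)\|_0 \leq \bar K\|v_2^0\|_0\, e^{-(\bar\rho-\kappa\bar K C_K\delta)t}$. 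Choosing $\delta_2\leq\delta_1$ so small that $\kappa\bar K C_K\delta_2\leq\bar\rho-\rho$ absorbs the Gronwall loss and yields \eqref{estv2} with any $C_1\geq\bar K$, using $\|v_2^0\|_0\leq\|\bv^0\|_{\mathcal{E}}$.

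\emph{Step 2: proving \eqref{estv1}.} Now plug the freshly obtained $\|v_2(s)\|_0\leq C_1 e^{-\rho s}\|\bv^0\|_{\mathcal{E}}$ into the Duhamel formula for $v_1$, invoking the uniform bound $\|e^{t\mathcal{L}^{(1)}}\|_{0\to 0}\leq M$. The affine driving term contributes at most $\frac{Me^{-\kappa}C_1}{\rho}\|\bv^0\|_{\mathcal{E}}$ because $\int_0^t e^{-\rho s}\,ds\leq 1/\rho$, while the nonlinear term is dominated by $MC_K C_1\|\bv^0\|_{\mathcal{E}}\int_0^t e^{-\rho s}\|v_1(s)\|_0\,ds$. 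Gronwall's inequality with the integrable kernel $e^{-\rho s}$ then produces the uniform-in-$t$ bound $\|v_1(t)\|_0\leq C\|\bv^0\|_{\mathcal{E}}$, which is \eqref{estv1} after enlarging $C_1$.

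The main obstacle is the sharpness of the decay rate in Step 1: a naive Gronwall starting at rate $\rho$ only returns $\rho-O(\delta)$, which is strictly smaller than $\rho$. The trick is precisely the margin $\bar\rho-\rho$: by starting from a slightly faster semigroup rate and then forcing $\delta_2$ small enough to pay for the Gronwall perturbation, the final exponent lands exactly at $\rho$. This is also the place where the hypothesis $\nu<\rho$ enters consistently with the weighted estimate of Proposition \ref{prop2.4.2}, so that the $L^2$-decay of $v_2$ established here is compatible with (and in fact faster than) the weighted $\alpha$-decay already available. The remaining arguments are routine Duhamel-plus-Gronwall manipulations.
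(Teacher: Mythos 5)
Your proposal is correct and follows essentially the same route as the paper: Duhamel for the triangular system \eqref{sep1}--\eqref{sep2}, the quadratic bound $\|H_0(\bv)\|_0\leq C_K\|v_1\|_0\|v_2\|_0$, an enlarged decay rate $\bar\rho>\rho$ absorbed back to $\rho$ by choosing $\delta_2$ small after Gronwall, and then feeding the decay of $v_2$ into the bounded-semigroup equation for $v_1$. The only (harmless) variation is in Step 2, where you run a second Gronwall with the integrable kernel $e^{-\rho s}$ while the paper simply bounds $\|v_1(s)\|_0\leq\delta$ inside the quadratic term before integrating; both yield the uniform bound \eqref{estv1}.
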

\begin{proof}
	We note that $\left(v_{1}, v_{2}\right)$ is the solution of \eqref{sep1}-\eqref{sep2} with initial values $\left(v_{1}^{0}, v_{2}^{0}\right)$ at $t=0$, that is $\left(v_{1}, v_{2}\right)(t)=\left(v_{1}, v_{2}\right)\left(t, v_{1}^{0}, v_{2}^{0}\right)$.
	With the help of Proposition \ref{lip3}.(2), we can find a constant $C_{\delta_{1}}>0$ so that
	\begin{equation}
		\left\|H_{1}\left(v_{1}, v_{2}\right)\right\|_{0} \leqslant C_{\delta_{1}}\left\|v_{1}\right\|_{0}\left\|v_{2}\right\|_{0},\lb{h1}
	\end{equation}
	and
	$$
	\left\|H_{2}\left(v_{1}, v_{2}\right)\right\|_{0}=\left\|-\kappa H_{1}(\mathbf{v})\right\|_{0} \leqslant C_{\delta_{1}}\left\|v_{1}\right\|_{0}\left\|v_{2}\right\|_{0} 
	$$
	when $\|\mathbf{v}\|_{0} \leqslant \delta_{1}$.
	The solution of \eqref{sep2} in $H^{k}(\mathbb{R}^d)$ can be written as
	$$
	v_{2}(t)=e^{t \mathcal{L}_{2}} v_{2}^{0}+\int_{0}^{t} e^{(t-s) \mathcal{L}_{2}} H_{2}\left(v_{1}(s), v_{2}(s)\right) d s .
	$$
	We then choose some $\bar{\rho}>\rho>0$ and $k=\bar{\rho} / \rho>1$ such that
	$$
	\sup \left\{\operatorname{Re} \lambda: \lambda \in \operatorname{Sp}\left(\mathcal{L}_{2}\right)\right\}<-\bar{\rho}:=-k \rho .
	$$
	By Lemma \ref{lem2.2.4}.(3), there exists $K_{2}>0$ such that $\left\|e^{t \mathcal{L}_{2}}\right\|_{H^{k}(\mathbb{R}^d)\rightarrow H^k(\bbR^d)} \leqslant K_{2} e^{-\bar{\rho} t}$. For each $\delta \in\left(0, \delta_{1}\right)$ and $\gamma \in(0, \delta)$, if $\left\|\mathbf{v}^{0}\right\|_{\mathcal{E}} \leqslant \gamma$ then
	$$
	\left\|v_{1}(s)\right\|_{0} \leqslant\left\|v_{1}(s)\right\|_{\mathcal{E}} \leqslant\|\mathbf{v}(s)\|_{\mathcal{E}} \leqslant \delta .
	$$
	By Proposition \ref{prop2.4.1}, we can obtain the following estimate for $v_{2}(t)$ by using \eqref{sep2}:
	$$
	\begin{aligned}
		\left\|v_{2}(t)\right\|_{0} & \leqslant K_{2} e^{-\bar{\rho} t}\left\|v_{2}^{0}\right\|_{0}+\int_{0}^{t} K_{2} e^{-\bar{\rho}(t-s)} C_{\delta_{1}}\left\|v_{1}(s)\right\|_{0}\left\|v_{2}(s)\right\|_{0} d s \\
		& \leqslant K_{2} e^{-\bar{\rho} t}\left\|v_{2}^{0}\right\|_{0}+\int_{0}^{t} K_{2} e^{-\bar{\rho}(t-s)} C_{\delta_{1}} \delta\left\|v_{2}(s)\right\|_{0} d s .
	\end{aligned}
	$$
	We then calculate
	$$
	\begin{aligned}
		e^{\bar{\rho} t}\left\|v_{2}(t)\right\|_{0} & \leqslant K_{2}\left\|v_{2}^{0}\right\|_{\mathcal{E}}+K_{2} C_{\delta_{1}} \delta \int_{0}^{t} e^{\bar{\rho} s}\left\|v_{2}(s)\right\|_{0} d s \\
		& \leqslant K_{2}\left\|\mathbf{v}^{0}\right\|_{\mathcal{E}}+K_{2} C_{\delta_{1}} \delta \int_{0}^{t} e^{\bar{\rho} s}\left\|v_{2}(s)\right\|_{0} d s .
	\end{aligned}
	$$
	By applying Gronwall's inequality to $e^{\bar{\rho} t}\left\|v_{2}(t)\right\|_{0}$, we infer that
	$$
	\left\|v_{2}(t)\right\|_{0} \leqslant K_{2}\left\|\mathbf{v}^{0}\right\|_{\mathcal{E}} e^{K_{2} C_{\delta_{1}} \delta t-\bar{\rho} t} .
	$$
	Let $\delta_{2}<\min \left(\delta_{1}, \frac{(k-1) \rho}{K_{2} C_{\delta_{1}}}\right)$, then for $\delta<\delta_{2}$ it follows that
	$$
	\left\|v_{2}(t)\right\|_{0} \leqslant K_{2}\left\|\mathbf{v}^{0}\right\|_{\mathcal{E}} e^{-\rho t} \text { for all } t \in[0, T(\delta, \gamma)).
	$$
	proving \eqref{estv2}. We proceed next to prove \eqref{estv1}. The solution of \eqref{sep1} in $H^{k}(\mathbb{R}^d)$ satisfies
	$$
	v_{1}(t)=e^{t \mathcal{L}_{1}} v_{1}^{0}+\int_{0}^{t} e^{(t-s) \mathcal{L}_{1}}\left(e^{-\kappa} v_{2}(s)+H_{1}\left(v_{1}(s), v_{2}(s)\right) d s .\right.
	$$
	First, because $\mathcal{L}_{1}$ generates a bounded semigroup by Lemma \ref{lem2.2.4}.(1), there exists a constant $K_{3}>0$, such that $\left\|e^{t \mathcal{L}_{1}}\right\|_{H^{k}(\mathbb{R}^d)\rightarrow H^k(\bbR^d)} \leqslant K_{3}$. By using \eqref{h1} and the fact that
	$$
	\left\|e^{-\kappa} v_{2}(s)\right\|_{0} \leqslant\left\|v_{2}(s)\right\|_{0}
	$$
	for $\kappa>0$ we infer that
	$$
	\left\|v_{1}(t)\right\|_{0} \leqslant K_{3}\left\|v_{1}^{0}\right\|_{0}+\int_{0}^{t}\left(K_{3} C_{\delta_{1}}\left\|v_{2}(s)\right\|_{0}\left\|v_{1}(s)\right\|_{0}+K_{3}\left\|v_{2}(s)\right\|_{0}\right) d s .
	$$
	Also, using the fact that $\left\|v_{1}(s)\right\|_{0} \leqslant\|\mathbf{v}(s)\|_{0} \leqslant\|\mathbf{v}(s)\|_{\mathcal{E}}<\delta<\delta_{2}$, we have, for a constant $C_{\delta_{1}, \delta_{2}}>0$ independent of $\delta$, that
	$$
	\begin{aligned}
		\left\|v_{1}(t)\right\|_{0} & \leqslant K_{3}\left\|v_{1}^{0}\right\|_{\mathcal{E}}+\int_{0}^{t} K_{3}\left(C_{\delta_{1}}\left\|v_{1}(s)\right\|_{0}+1\right)\left\|v_{2}(s)\right\|_{0} d s\\
		& \leqslant K_{3}\left\|v_{1}^{0}\right\|_{\mathcal{E}}+\int_{0}^{t} K_{3} C_{\delta_{1}, \delta_{2}}\left\|v_{2}(s)\right\|_{0} d s.
	\end{aligned}
	$$
	Then we use \eqref{estv2} to obtain
	$$
	\begin{aligned}
		\left\|v_{1}(t)\right\|_{0} & \leqslant K_{3}\left\|\mathbf{v}^{0}\right\|_{\mathcal{E}}+\int_{0}^{t} K_{2} K_{3} C_{\delta_{1}, \delta_{2}} e^{-\rho s}\left\|\mathbf{v}^{0}\right\|_{\mathcal{E}} d s \\
		& \leqslant K_{3}\left\|\mathbf{v}^{0}\right\|_{\mathcal{E}}+K_{2} K_{3} C_{\delta_{1}, \delta_{2}}\left\|\mathbf{v}^{0}\right\|_{\mathcal{E}} \int_{0}^{t} e^{-\rho s} d s \\
		& \leqslant C_{2}\left\|\mathbf{v}^{0}\right\|_{\mathcal{E}}
	\end{aligned}
	$$
	for some $C_{2}>0$. In conclusion, there exists a constant $C_{1}>0$ such that for $\delta \in\left(0, \delta_{2}\right)$ and $\gamma \in(0, \delta)$, the inequalities \eqref{estv1} and \eqref{estv2} hold when $t \in[0, T(\delta, \gamma))$.
\end{proof}

\subsection{Proof of Theorem \ref{th2.1}}\lb{ssec4}
In this subsection we will present the main proof of the stability of the end state $\mathbf{u}_{-}$ of \eqref{eq2.1.3} in $\|\cdot\|_{\mathcal{E}}$. The proof relies on the following bootstrap argument based on Propositions \ref{prop2.4.1}- \ref{prop2.4.3}. Indeed, these propositions yield the existence of constants $\delta_{0}>0$ and $C_{\delta_{0}}>0$ such that for every $\delta \in\left(0, \delta_{0}\right)$ and every $\gamma \in(0, \delta)$, there exists $T(\delta, \gamma)$, such that for every $t \in[0, T(\delta, \gamma))$ the inequalities
	\begin{equation}\lb{remeq1}
		\|\mathbf{v}(t)\|_{\mathcal{E}}<\delta \text { and }\|\mathbf{v}(t)\|_{\mathcal{E}} \leqslant C_{\delta_{0}}\left\|\mathbf{v}^{0}\right\|_{\mathcal{E}}
	\end{equation}
hold for the solution $\mathbf{v}(t)$ of \eqref{eq2.1.7} with the initial value $\mathbf{v}^{0} \in \mathcal{E}^2$ as long as $\left\|\mathbf{v}^{0}\right\|_{\mathcal{E}}<\gamma$. Let us show that for each $\delta \in\left(0, \delta_{0}\right)$, there is an $\eta$ such that if $\left\|\mathbf{v}^{0}\right\|_{\mathcal{E}}<\eta$ then $\|\mathbf{v}(t)\|_{\mathcal{E}}<\delta$ for all $t \geqslant 0$. Indeed, assuming $C_{\delta_{0}}>1$ with no loss of generality, set $\eta=\frac{\delta}{2 C_{\delta_{0}}}$ and assume $\left\|\mathbf{v}^{0}\right\|_{\mathcal{E}}<\eta$. Then $\|\mathbf{v}(T(\delta, \gamma))\|_{\mathcal{E}}<\delta / 2$ by \eqref{remeq1}, and thus the solution $\mathbf{v}$ with the initial value $\mathbf{v}(T(\delta, \gamma))$ ) satisfies \eqref{remeq1} again for $t \in[T(\eta, \gamma), 2 T(\eta, \gamma))$, again by Proposition \ref{prop2.4.1} and \ref{prop2.4.2}. So, these propositions can be applied for all $t \geqslant 0$, proving the stability. In addition, as long as these propositions are applicable, we obtain a more refined information about the behavior of the solution such as its boundedness in $\|\cdot\|_{0}$-norm and the exponential decay in $\|\cdot\|_{\alpha}$-norm, see items (3)-(5) of \ref{th2.1}. 

Given initial value $\mathbf{v}^0\in\mathcal{E}^2$, let $\mathbf{v}(t)=\mathbf{v}(t,\mathbf{v}^0)$ be the solution of \eqref{eq2.1.7} in $\mathcal{E}^2$, which we showed exists on at least a short time period. We will complete the proof of nonlinear stability of the end state $\mathbf{u}_-$ by obtaining control on solutions for all time $t$. For this we need to use the following general result, see \cite[Proposition 1.21]{Tao}.
\begin{lemma}\label{ABP}
	(Abstract bootstrap principle) \\
	Let I be a time interval and suppose that for each $T\in I$ we have two statements: a "hypothesis" $H(T)$ and a "conclusion" $C(T)$. Let us suppose that we can verify the following four assertions:
	\begin{itemize}
		\item [(a)](Hypothesis implies conclusion) If $H(T)$ is true for some time $T\in I$, then $C(T)$ is also true for that time $T$.\\
		\item[(b)] (Conclusion is stronger than hypothesis) If $C(T)$ is true for some time $T\in I$, then $H(T')$ is true for all $T'\in I$ in a neighborhood of $T$.\\
		\item[(c)] (Conclusion is closed) If $T_1, T_2,...$ is a sequence of times in $I$ which converges to another time $T\in I$, and $C(T_n)$ is true for all $T_n$, then $C(T)$ is true.\\
		\item[(d)](Base case) $H(T)$ is true for at least one time $T\in I$.\\
		Then $C(T)$ is true for all $T\in I$. 
	\end{itemize}
\end{lemma}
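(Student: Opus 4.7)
The plan is a standard connectedness argument applied to the subset of $I$ on which $C$ holds. I would introduce $S = \{T \in I : C(T) \text{ is true}\} \subseteq I$ and aim to prove $S = I$. Since $I$ is an interval in $\mathbb{R}$, it is connected in the subspace topology, so it suffices to verify that $S$ is nonempty, relatively open in $I$, and relatively closed in $I$; the only such subset of a connected space is the whole space.

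Nonemptyness is immediate from (d) followed by (a): assertion (d) supplies some $T_{0} \in I$ at which $H(T_{0})$ holds, and (a) then upgrades this to $C(T_{0})$, so $T_{0} \in S$. For relative openness, suppose $T \in S$, so $C(T)$ is true. By (b) there is a neighborhood $U \subseteq I$ of $T$ on which $H(T')$ holds for every $T' \in U$, and applying (a) pointwise on $U$ yields $C(T')$ for every $T' \in U$. Hence $U \subseteq S$, i.e., every point of $S$ is a relative interior point in $I$. For relative closedness, let $(T_{n}) \subseteq S$ converge to $T \in I$; by definition $C(T_{n})$ holds for each $n$, and assertion (c) then directly gives $C(T)$, so $T \in S$.

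Combining nonemptyness, openness, and closedness with the connectedness of $I$ forces $S = I$, which is precisely the conclusion of the lemma. An alternative phrasing would replace the clopen-in-a-connected-space reasoning by setting $T^{*} = \sup\{T : [T_{0}, T] \subseteq S\}$ and using (b) and (c) to derive a contradiction unless $T^{*}$ is the right endpoint of $I$, but the clopen route is cleaner and handles intervals of either sign symmetrically.

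The argument is essentially pure point-set topology and I anticipate no substantive obstacle; the only mild subtlety worth flagging is that the word \emph{neighborhood} in hypothesis (b) must be interpreted in the relative topology of $I$ (rather than of $\mathbb{R}$) so that potential endpoints of $I$ are treated correctly. Once that convention is fixed, the three set-theoretic properties of $S$ read off directly from hypotheses (a)--(d) without any analytic input.
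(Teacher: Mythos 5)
Your proof is correct: the clopen-subset-of-a-connected-interval argument, with nonemptyness from (d)+(a), relative openness from (b)+(a), and relative closedness from (c), is exactly the standard proof of this principle (and is essentially the proof given in the cited source, Tao's Proposition 1.21). The paper itself does not prove this lemma but simply quotes it from \cite{Tao}, so there is nothing to compare against beyond noting that your argument, including the remark about interpreting ``neighborhood'' in the relative topology of $I$, is complete and correct.
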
 

We will then state the main result in Theorem \ref{th2.1}. The small constant $\delta_{0}$ in the proof can be taken as $\delta_{0}=\delta_{2}$, where $\delta_{2}$ is chosen as in Proposition \ref{prop2.4.3}.

\begin{theorem} \label{th2.1}
	Let $\cE_0=H^k(\bbR^d)$ with $k\geq\frac{d+1}{2}$ and consider the semilinear system \eqref{eq2.1.7}. There exist  constants $C>0$ ,$\nu>0$ and a small $\delta_0>0$ such that for each $0<\delta<\delta_0$, we can find $\eta$ satisfying $0<\eta<\delta$ such that  if $||\bv^0||_{\mathcal{E}}\leq \eta$, the following is true for the solution $\bv(t)$ of \eqref{eq2.1.7} for all $t> 0$:
	\begin{itemize}
		\item[(1)] $\bv(t)$ is defined in $\mathcal{E}^2$;
		\item[(2)] $||\bv(t)||_{\mathcal{E}}\leq \delta$;
		\item[(3)] $||\bv(t)||_{\alpha}\leq Ce^{-\nu t}||\bv^0||_{\alpha}$;
		\item[(4)] $||v_1(t)||_{0}\leq C||\bv^0||_{\mathcal{E}}$;
		\item[(5)] $||v_2(t)||_{0}\leq Ce^{-\nu t}||\bv^0||_{\mathcal{E}}.$
	\end{itemize}
\end{theorem}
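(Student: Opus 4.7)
The plan is to apply the Abstract Bootstrap Principle (Lemma \ref{ABP}) to combine the local well-posedness and continuity from Proposition \ref{prop2.4.1} with the quantitative decay estimates from Propositions \ref{prop2.4.2} and \ref{prop2.4.3}, upgrading them from holding on the local time window $[0,T(\delta,\gamma))$ to holding on all of $[0,\infty)$. Once the bootstrap yields that $\|\bv(t)\|_{\mathcal{E}}$ stays small for all time, items (3)--(5) of the theorem follow directly from Propositions \ref{prop2.4.2} and \ref{prop2.4.3}, since the constants appearing there are independent of the time interval.

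First, I would fix $\delta_0:=\delta_2$ with $\delta_2$ as in Proposition \ref{prop2.4.3}, and choose a single exponent $\nu>0$ that simultaneously satisfies the condition $\alpha\in(0,c/2)$ of Proposition \ref{pro2.2.3} and the bound $\nu<\rho$ of Lemma \ref{lem2.2.4}(3b). Assembling the three propositions shows that for every $\delta\in(0,\delta_0)$ and $\gamma\in(0,\delta)$ and every $\bv^0$ with $\|\bv^0\|_{\mathcal{E}}<\gamma$, the mild solution exists on $[0,T(\delta,\gamma))$ and satisfies $\|\bv(t)\|_{\mathcal{E}}<\delta$ together with \eqref{estv1}, \eqref{estv2}, and the weighted decay of Proposition \ref{prop2.4.2}. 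In particular, there is a constant $C_{\delta_0}\geq 1$, depending only on $\delta_0$, such that $\|\bv(t)\|_{\mathcal{E}}\leq C_{\delta_0}\|\bv^0\|_{\mathcal{E}}$ on $[0,T(\delta,\gamma))$; this is the content of \eqref{remeq1}.

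I would next set $\eta:=\delta/(2C_{\delta_0})$, fix $\bv^0$ with $\|\bv^0\|_{\mathcal{E}}<\eta$, and apply Lemma \ref{ABP} on the interval $I=[0,t_{\max}(\bv^0))$ to the statements $H(T):\sup_{t\in[0,T]}\|\bv(t)\|_{\mathcal{E}}\leq\delta$ and $C(T):\sup_{t\in[0,T]}\|\bv(t)\|_{\mathcal{E}}\leq\delta/2$. The base case (d) is immediate since $\|\bv^0\|_{\mathcal{E}}<\eta<\delta$; condition (a) is the heart of the bootstrap and is verified by noting that if $H(T)$ holds then the a priori estimate above gives $\|\bv(t)\|_{\mathcal{E}}\leq C_{\delta_0}\|\bv^0\|_{\mathcal{E}}<C_{\delta_0}\eta=\delta/2$, which is exactly $C(T)$; openness (b) and closedness (c) follow from the continuity of $t\mapsto\|\bv(t)\|_{\mathcal{E}}$ provided by Proposition \ref{prop2.4.1}. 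Hence $\|\bv(t)\|_{\mathcal{E}}\leq\delta/2$ throughout $[0,t_{\max}(\bv^0))$, and the standard blow-up alternative for the semilinear mild problem on $\mathcal{E}^2$ then forces $t_{\max}(\bv^0)=\infty$, proving (1) and (2).

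Finally, items (3)--(5) follow by applying Propositions \ref{prop2.4.2} and \ref{prop2.4.3} on each $[0,T]\subset[0,\infty)$, since the constants $K_1,C_1$ and the rates $\nu,\rho$ produced there do not depend on $T$; the estimates of the theorem are then recovered by setting $C:=\max\{K_1,C_1\}$ and, if necessary, replacing $\nu$ by $\min\{\nu,\rho\}$. The main obstacle is verifying condition (a) of the bootstrap, which really asks that the constant $C_{\delta_0}$ coming from the Gronwall closures inside Propositions \ref{prop2.4.2} and \ref{prop2.4.3} depend only on $\delta_0$ and not on the evolving norm $\|\bv(t)\|_{\mathcal{E}}$; this relies crucially on the quadratic estimate $\|H(\bv)\|_{\mathcal{E}}\leq C_K\|\bv\|_{\mathcal{E}}^2$ from Proposition \ref{lip3}, which allows the smallness of $\delta$ to absorb the self-referential term in the Gronwall inequality.
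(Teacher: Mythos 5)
Your proposal is correct and follows essentially the same route as the paper: the abstract bootstrap principle applied to the a priori bounds of Propositions \ref{prop2.4.1}--\ref{prop2.4.3}, with $\delta_0=\delta_2$, $\eta=\delta/(2C_{\delta_0})$, and items (3)--(5) read off from the propositions once smallness of $\|\bv(t)\|_{\mathcal{E}}$ is propagated to all time. Your formulation of $H(T)$ and $C(T)$ as statements about a fixed $T$ (plus the explicit appeal to the blow-up alternative) is a slightly cleaner packaging of the same argument, in which the paper instead iterates the local existence window $T(\delta,\gamma)$.
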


\begin{proof}
	Let $I$ be the time interval $[0,\infty)$.
	
	Let $H(T)$ in Lemma \ref{ABP} be the following statement: For each $0<\delta<\delta_2$, where $\delta_2$ is chosen as in Proposition \ref{prop2.4.3}, there exists $0<\gamma<\delta$, such that if $||\mathbf{v}^0||_{\mathcal{E}}\leq\gamma$, then $\mathbf{v}(t)$ is defined and $||\mathbf{v}(t)||_{\mathcal{E}}\leq\delta$ on the time interval $[0, T)$ for some $T=T(\delta,\gamma)$ depending on $\delta$ and $\gamma$. Thus  property (d)  of the bootstrap principle is proven by Proposition \ref{prop2.4.1}.
	
	Let $C(T)$ be the following statement: There exists $T>0$, such that on the time interval $[0, T)$, properties (3)-(5) in Theorem \ref{th2.1} hold.
	
	Let $0<\gamma_1<\delta<\delta_{2}$. Let $\gamma=C^{-1}\gamma_1$ where $C$ is a constant satisfying $C>\max\{1,K_1, C_1\}$ with $K_1$ and $C_1$ given as in Propositions \ref{prop2.4.2} and \ref{prop2.4.3}.
	
	Let $\mathbf{v}^0\in  \mathcal{E}^2$ with $||\mathbf{v}^0||_{\mathcal{E}}\leq\gamma$ be the initial value of \eqref{eq2.1.7}. Now $\gamma\leq \gamma_1< \delta$. Choose $\nu$ as in Proposition \ref{pro2.2.3}, by Propositions \ref{prop2.4.2} and \ref{prop2.4.3}, the items (3),(4) and (5) hold for $0<t\leq T(\delta,\gamma)$. So property (a) of the bootstrap principle is proven.
	
	Property (c) will hold by the continuity of $\mathbf{v}(t)$.
	
	Now we need to prove property (b) of the bootstrap principle.
	Let $\mathbf{v}^0\in \mathcal{E}^2$ with $||\mathbf{v}^0||_{\mathcal{E}}\leq\gamma$, for any $t \in(0,T)$, the inequilities (3), (4) and (5) hold, so by continuity of $\mathbf{v}(t)$ we can conclude:
	\begin{equation}
		||\mathbf{v}(T, \mathbf{v}^0)||_{\mathcal{E}}\leq C||\mathbf{v}^0||_{\mathcal{E}}\leq C\gamma= \gamma_1.
	\end{equation}
	If we take $\mathbf{v}^1=\mathbf{v}(T, \mathbf{v}^0)$ as an initial value of system \eqref{eq2.1.7}, it satisfies  $||\mathbf{v}^1||_{\mathcal{E}}\leq\gamma_1<\delta$, by applying Proposition \ref{prop2.4.1} again, there exists $T(\delta,\gamma_1)>0$, such that for all $t\in (T, T+T(\delta,\gamma_1))$, we have
	\begin{equation}
		||\mathbf{v}(T+t,\mathbf{v}^0)||_{\mathcal{E}}=||\mathbf{v}(t,\mathbf{v}^1)||_{\mathcal{E}}\leq\delta.
	\end{equation}
	Then $H(T')$ is true for $T'=T+T(\delta,\gamma_1)$ and property (b) is proven.
	
	Thus, by the bootstrap principle, we finish the proof of Theorem \ref{th2.1}.
\end{proof}

\section{Stability of the end states for a general system}\lb{sec3}
In this section we will study an $\bx$-independent steady state solution $\bu_{-}$to \eqref{eq1.1.2} with $f\left(\bu_{-}\right)=0$, and its perturbation depending on the spatial variable $\bx \in \mathbb{R}^{d}$.

Without loss of generality we shall take $\bu_{-}=0$. Information about the stability of the zero solution is encoded in the spectrum of the operator obtained by linearizing \eqref{eq1.1.2} about zero,
\begin{equation}\lb{opL}
	\bu_{t}=D \Delta_{\bx} u+c \partial_{z} \bu+\partial_{\bu} f(0) \bu=: L \bu,
\end{equation}
where $\partial_{\bu}$ is the differential with respect to $\bu$.

Let $\mathcal{E}_{0}$ be the Sobolev spaces $H^{k}\left(\mathbb{R}^{d}\right)$, and define the weight function
$$
\gamma_{\alpha}\left(z, x_{2}, \ldots, x_{d}\right)=e^{\alpha z}
$$
and the spaces $\mathcal{E}_{\alpha}$ and $\mathcal{E}=\mathcal{E}_{0} \cap \mathcal{E}_{\alpha}$ as before. Analogously to the model problem that has been discussed in Section \ref{sec2}, we use $\mathcal{L}$ to denote the operator defined on $\mathcal{E}_{0}^2$ given by the map $u \rightarrow L u$, with the domain $u \in H^{k+2}\left(\mathbb{R}^{d}\right)^2$, and use $\mathcal{L}_{\alpha}$ to denote the operator defined on $\mathcal{E}_{\alpha}^2$ given by $u \rightarrow L u$, with the domain being the set of $u$ where $\gamma_{\alpha} u \in H^{k+2}\left(\mathbb{R}^{d}\right)^2$.
Throughout we impose the following assumptions on $f(\cdot)$ in \eqref{eq1.1.2}.

\begin{hypothesis}\lb{hy1}
	\begin{itemize}
		\item [(a)]In appropriate variables $\bu=\left(\bu_{1}, \bu_{2}\right), \bu_{1} \in \mathbb{R}^{n_{1}}, \bu_{2} \in \mathbb{R}^{n_{2}}, n_{1}+n_{2}=n$, we assume that for some constant $n_{1} \times n_{1}$ matrix $A_{1}$, one has
		$$
		f\left(\bu_{1}, 0\right)=\left(A_{1} \bu_{1}, 0\right)^{T} .
		$$
		\item [(b)]The function $f$ is $C^{k+3}$ from $\mathbb{R}^{n}$ to $\mathbb{R}^{n}$.
	\end{itemize}
\end{hypothesis}
If Hypothesis \ref{hy1} holds, then $f(\bu_{\pm})=0$ and 
$$
\begin{aligned}
	f\left(\bu_{1}, \bu_{2}\right) &=f\left(\bu_{1}, 0\right)+f\left(\bu_{1}, \bu_{2}\right)-f\left(\bu_{1}, 0\right) \\
	&=\left(\begin{array}{c}
		A_{1} \bu_{1} \\
		0
	\end{array}\right)+\int_{0}^{1} \partial_{\bu_{2}} f\left(\bu_{1}, t \bu_{2}\right) d t \bu_{2}\\
&=\left(\begin{array}{c}
	A_{1} \bu_{1}+\tilde{f}_{1}\left(\bu_{1}, \bu_{2}\right) \bu_{2} \\
	\tilde{f}_{2}\left(\bu_{1}, \bu_{2}\right) \bu_{2}
\end{array}\right),
\end{aligned}
$$
where $\tilde{f}_{1}$ and $\tilde{f}_{2}$ are some matrix-valued functions of size $n_{1} \times n_{2}$ and $n_{2} \times n_{2}$, respectively.
We write
$$
D=\left(\begin{array}{cc}
	D_{1} & 0 \\
	0 & D_{2}
\end{array}\right), \quad f(\bu)=\left(\begin{array}{l}
	f_{1}\left(\bu_{1}, \bu_{2}\right) \\
	f_{2}\left(\bu_{1}, \bu_{2}\right)
\end{array}\right),
$$
where each $D_{i}$ is a nonnegative diagonal matrix of size $n_{i} \times n_{i}$, and $f_{i}: \mathbb{R}^{n_{1}} \times \mathbb{R}^{n_{2}} \rightarrow \mathbb{R}^{n_{i}}$ for $i=1,2$. Equation \eqref{eq1.1.2} now reads
\begin{equation}
	\partial_{t} \bu_{1}=D_{1} \Delta_{x} \bu_{1}+c \partial_{z} \bu_{1}+f_{1}\left(\bu_{1}, \bu_{2}\right), \lb{u1}
\end{equation}
\begin{equation}
	\partial_{t} \bu_{2}=D_{2} \Delta_{x} \bu_{2}+c \partial_{z} \bu_{2}+f_{2}\left(\bu_{1}, \bu_{2}\right). \lb{u2}
\end{equation}
If we linearize \eqref{u2} at $(0,0)$, then the constant-coefficient linear equation depends only on $u_{2}$ since $f(0,0)=0$ by Hypothesis \ref{hy1}.(a) can be obtained:
\begin{equation}\lb{l2}
	\begin{aligned}
		\partial_{t} \bu_{2} &=D_{2} \Delta_{\bx} \bu_{2}+c \partial_{z} \bu_{2}+\partial_{\bu_{1}} f_{2}(0,0) \bu_{1}+\partial_{\bu_{2}} f_{2}(0,0) \bu_{2} \\
		&=D_{2} \Delta_{\bx} \bu_{2}+c \partial_{z} \bu_{2}+\partial_{\bu_{2}} f_{2}(0,0) \bu_{2}.
	\end{aligned}
\end{equation}
We denote by $L^{(2)} \bu_{2}$ the right-hand side of \eqref{l2} and let $\mathcal{L}^{(2)}$ be the operator defined on $H^{k}\left(\mathbb{R}^{d}\right)^{n_{2}}$ given by $\bu \rightarrow L^{(2)} \bu$, with the domain $\bu \in H^{k+2}\left(\mathbb{R}^{d}\right)^{n_{2}}$.

In addition, we linearize \eqref{u1} at $(0,0)$, and by Hypothesis \ref{hy1}.(a), the respective constant-coefficient linear equation reads:
\begin{equation}\lb{l1}
	\begin{aligned}
		\partial_{t} \bu_{1} &=D_{1} \Delta_{\bx} \bu_{1}+c \partial_{z} \bu_{1}+\partial_{\bu_{1}} f_{1}(0,0) \bu_{1}+\partial_{\bu_{2}} f_{1}(0,0) \bu_{2} \\
		&=D_{1} \Delta_{\bx} \bu_{1}+c \partial_{z} \bu_{1}+A_{1} \bu_{1}+\partial_{\bu_{2}} f_{1}(0,0) \bu_{2} .
	\end{aligned}
\end{equation}
We denote $L^{(1)} \bu_{1}=D_{1} \Delta_{\bx} \bu_{1}+c \partial_{z} \bu_{1}+A_{1} \bu_{1}$, thus $\partial_{t} \bu_{1}=L^{(1)} \bu_1+\partial_{\bu_{2}} f_{1}(0,0) \bu_{2}$. Let $\mathcal{L}^{(1)}$ be the operator defined on $H^{k}\left(\mathbb{R}^{d}\right)^{n_{1}}$ given by $\bu \rightarrow L^{(1)} \bu$, with the domain $\bu \in H^{k+2}\left(\mathbb{R}^{d}\right)^{n_{1}}$.

With some additional assumptions listed below, we will show that the perturbations of the left end state $\bu_{-}$ that are initially small in both the unweighted norm and weighted norm stay small in the unweighted norm and decay exponentially in the weighted norm. In addition, the $\bu_{2}$-component of the perturbation decays exponentially in the unweighted norm. We will now use the following hypotheses about the spectrum of $\mathcal{L}$. 

\begin{hypothesis}\lb{h2}
	In addition to Hypothesis \ref{hy1}, we assume that there exists a constant $\alpha>0$ such that $\sup \left\{\operatorname{Re} \lambda: \lambda \in \operatorname{Sp}\left(\mathcal{L}_{\alpha}\right)\right\}<0$ on $L_{\alpha}^{2}(\mathbb{R})^{n} \otimes L^{2}\left(\mathbb{R}^{d-1}\right)^{n}$.
\end{hypothesis}
As in Subsection \ref{ssec1}, let $y=\left(x_{2}, \ldots, x_{d}\right) \in \mathbb{R}^{d-1}$, so that $\bx=(z, y) \in \mathbb{R}^{d}$, and denote $L_{1, \alpha}=D \partial_{z z}+c \partial_{z}+\partial_{\bu} f(0) \bu$ and $\Delta_{y}=\partial_{x_{2}}^{2}+\cdots+\partial_{x_{d}}^{2}$, and next define the linear operators $\mathcal{L}_{1, \alpha}: H_{\alpha}^{k}(\mathbb{R})^{n} \rightarrow H_{\alpha}^{k}(\mathbb{R})^{n}$, where $\operatorname{dom}\left(\mathcal{L}_{1, \alpha}\right)=H_{\alpha}^{k+2}(\mathbb{R})^{n} \subset H_{\alpha}^{k}(\mathbb{R})^{n}$, and $\Delta_{y}: H^{k}\left(\mathbb{R}^{d-1}\right)^{n} \rightarrow H^{k}\left(\mathbb{R}^{d-1}\right)^{n}$ where $\operatorname{dom}\left(\Delta_{y}\right)=H^{k+2}\left(\mathbb{R}^{d-1}\right)^{n} \subset H^{k}\left(\mathbb{R}^{d-1}\right)$. Then the operator $\mathcal{L}_{\alpha}$ on the space $H_{\alpha}^{k}\left(\mathbb{R}^{d}\right)^{n}$ can be represent as
$$
\mathcal{L}_{\alpha}=\mathcal{L}_{1, \alpha} \otimes I_{H^{k}\left(\mathbb{R}^{d-1}\right)}+I_{H_{\alpha}^{k}(\mathbb{R})} \otimes \Delta_{y} .
$$
Hypothesis \ref{h2} holds if there exist a constant $\alpha>0$ such that $\sup \{\operatorname{Re} \lambda: \lambda \in$ $\left.\operatorname{Sp}\left(\mathcal{L}_{1, \alpha}\right)\right\}<0$ on $H_{\alpha}^{k}(\mathbb{R})^{n}$. Indeed, by using Remark \ref{spectp}, we have
$$
\operatorname{Sp}\left(\mathcal{L}_{1, \alpha} \otimes I_{H^{k}\left(\mathbb{R}^{d-1}\right)}+I_{H_{\alpha}^{k}(\mathbb{R})} \otimes \Delta_{y}\right)=\operatorname{Sp}\left(\mathcal{L}_{1, \alpha}\right)+\operatorname{Sp}\left(\Delta_{y}\right)
$$
Note that the spectrum of $\mathcal{L}_{1, \alpha}$ on $L_{\alpha}^{2}\left(\mathbb{R}^{d}\right)$ and $H_{\alpha}^{k}\left(\mathbb{R}^{d}\right)$ are equal similarly to Lemma \ref{lem3sa}, and thus we can see that if $\sup \left\{\operatorname{Re} \lambda: \lambda \in \operatorname{Sp}\left(\mathcal{L}_{1, \alpha}\right)\right\}<0$ on $L_{\alpha}^{2}(\mathbb{R})^{n}$, then Hypothesis \ref{h2} will be satisfied for any $H_{\alpha}^{k}\left(\mathbb{R}^{d}\right)^n$.

\begin{hypothesis}\lb{h3}
	In addition to Hypothesis \ref{h2}, we assume the following:
	\begin{itemize}
		\item [(1)]The operator $\mathcal{L}^{(1)}$ generates a bounded semigroup on the spaces $L^{2}\left(\mathbb{R}^{d}\right)^{n_{1}}$ and $H^{k}\left(\mathbb{R}^{d}\right)^{n_{1}} .$
		\item[(2)] The operator $\mathcal{L}^{(2)}$ satisfies $\sup \left\{\operatorname{Re} \lambda: \lambda \in \operatorname{Sp}\left(\mathcal{L}^{(2)}\right)\right\}<0$ on $L^{2}\left(\mathbb{R}^{d}\right)^{n_{2}}$ and $H^{k}\left(\mathbb{R}^{d}\right)^{n_{2}} .$
	\end{itemize}
\end{hypothesis}

Note that we have used the following lemma in stating these hypotheses, which is an analog of Lemma \ref{lem2.2.1} in Subsection \ref{ssec1}.
\begin{lemma}\lb{lem3sa}
	The linear operator $\mathcal{L}$ associated with $L$ in \eqref{opL} have the same spectrum on $L^{2}\left(\mathbb{R}^{d}\right)^{n}$ and $H^{k}\left(\mathbb{R}^{d}\right)^{n}$, the linear operators $\mathcal{L}^{(i)}$ associated with $L^{(i)}$ in \eqref{l1} and \eqref{l2} have the same spectra on $L^{2}\left(\mathbb{R}^{d}\right)^{n_{i}}$ and $H^{k}\left(\mathbb{R}^{d}\right)^{n_{i}}$ for $i=1,2$; similarly, the linear operator $\mathcal{L}_{\alpha}$ has the same spectrum on both $L_{\alpha}^{2}(\mathbb{R})^{n} \otimes L^{2}\left(\mathbb{R}^{d-1}\right)^{n}$ and $H_{\alpha}^{k}(\mathbb{R})^{n} \otimes H^{k}\left(\mathbb{R}^{d-1}\right)^{n}$.
	
\end{lemma}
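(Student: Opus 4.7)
The plan is to reduce every claim to the one-variable argument already given in Lemma \ref{lem2.2.1}, exploiting the fact that $L$, $L^{(1)}$, and $L^{(2)}$ are all constant-coefficient differential expressions (with matrix coefficients of sizes $n\times n$, $n_{1}\times n_{1}$, and $n_{2}\times n_{2}$ respectively). Thus all these operators become matrix-valued multiplication operators after Fourier transform, and the spectrum is governed entirely by their Fourier symbols.

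First, for the unweighted assertion on $\mathcal{L}$, I would introduce the Fourier transform $\mathcal{F}_1:H^k(\mathbb{R}^d)^n\to L^2_m(\mathbb{R}^d)^n$ with Sobolev weight $m(\xi)=(1+|\xi|_{\mathbb{R}^d}^2)^{k/2}$. Since $\mathcal{F}_1\Delta_{\bx}=-|\xi|^2\mathcal{F}_1$ and $\mathcal{F}_1\partial_z=-i\xi_1\mathcal{F}_1$, one has $\mathcal{F}_1\mathcal{L}_{H^k}=M(\xi)\mathcal{F}_1$ with the matrix-valued symbol
\begin{equation*}
M(\xi)=-|\xi|^2 D+ic\xi_1 I_n+\partial_{\bu}f(0).
\end{equation*}
Multiplication by $m(\cdot)$ is an isomorphism from $L^2_m(\mathbb{R}^d)^n$ onto $L^2(\mathbb{R}^d)^n$ that commutes with pointwise multiplication by $M$, and combining with the standard $L^2$-Fourier transform $\mathcal{F}_2$ satisfying $\mathcal{F}_2\mathcal{L}_{L^2}=M\mathcal{F}_2$, I obtain the similarity
\begin{equation*}
\mathcal{L}_{H^k}=(m\mathcal{F}_1)^{-1}\bigl(\mathcal{F}_2\mathcal{L}_{L^2}\mathcal{F}_2^{-1}\bigr)(m\mathcal{F}_1),
\end{equation*}
exactly as in \eqref{OPARES}. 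Similar operators have identical spectra, giving the first statement. The arguments for $\mathcal{L}^{(1)}$ and $\mathcal{L}^{(2)}$ are word-for-word identical after replacing $M(\xi)$ by the symbols read off from \eqref{l1} and \eqref{l2}.

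For the weighted claim on $\mathcal{L}_\alpha$, I would first conjugate away the exponential weight by the multiplication operator $N:\bu\mapsto\gamma_\alpha\bu$, which is an isomorphism from $H_\alpha^k(\mathbb{R})^n\otimes H^k(\mathbb{R}^{d-1})^n$ onto $H^k(\mathbb{R}^d)^n$ and from $L^2_\alpha(\mathbb{R})^n\otimes L^2(\mathbb{R}^{d-1})^n$ onto $L^2(\mathbb{R}^d)^n$. The conjugated operator $\hat{\mathcal{L}}_\alpha=N\mathcal{L}_\alpha N^{-1}$ is then another constant-coefficient differential operator on the unweighted spaces, with symbol computed exactly as in the proof of Proposition \ref{pro2.2.3} (the shifts $\partial_z\mapsto\partial_z-\alpha$ on the unweighted side). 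Applying the preceding paragraph to $\hat{\mathcal{L}}_\alpha$ yields equality of its spectra on $L^2(\mathbb{R}^d)^n$ and on $H^k(\mathbb{R}^d)^n$, and since similarity via $N$ preserves the spectrum, the spectrum of $\mathcal{L}_\alpha$ on the two weighted tensor-product spaces must agree.

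The main obstacle is bookkeeping rather than substance, since the lemma is a direct extension of Lemma \ref{lem2.2.1}. The one point requiring care is to verify that the isomorphisms $N$ and multiplication by $m$ are compatible with the relevant domains, that is, that $N$ bijects $\dom(\mathcal{L}_\alpha)$ onto $\dom(\hat{\mathcal{L}}_\alpha)$, and that the composition $(m\mathcal{F}_1)^{-1}\mathcal{F}_2$ bijects $\dom(\mathcal{L}_{L^2})$ onto $\dom(\mathcal{L}_{H^k})$. This is immediate from the very definitions $H_\alpha^k(\mathbb{R}^d)=\{u:\gamma_\alpha u\in H^k(\mathbb{R}^d)\}$ and the tensor-product identity $H^k(\mathbb{R}^d)=H^k(\mathbb{R})\otimes H^k(\mathbb{R}^{d-1})$, so no genuine analytic difficulty arises.
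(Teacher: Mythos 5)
Your proposal is correct and follows essentially the same route as the paper: the paper's own proof of this lemma is a one-line reduction to Lemma \ref{lem2.2.1}, whose argument is exactly the Fourier-symbol similarity \eqref{OPARES} for the unweighted spaces and the conjugation by $\gamma_\alpha$ for the weighted ones that you spell out. The only difference is that you make the domain bookkeeping and the weighted conjugation explicit, which the paper leaves implicit.
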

\begin{proof}
	Because $\mathcal{L}$ is associated with the constant-coefficient differential expression $L$, we can use the same proof as in Lemma \ref{lem2.2.1}.
\end{proof}

We will now rewrite equation \eqref{eq1.1.2} for the perturbation $\bv(t,\bx)$ of the end state $\bu_{-}=0$, in the form amenable for the subsequence analysis. We seek a solution to \eqref{eq1.1.2} of the form $\bu(t, \bx)=\bu_{-}+\bv(t, \bx)$. With this notation, $\bv=\bv(t, \bx)$ satisfies
\begin{equation}\lb{sys1}
	\bv_{t}=D \Delta_{\bx} \bv+c \partial_{z} \bv+\partial_{\bu} f(0) \bv+f(\bv)-f(0)-\partial_{\bu} f(0) \bv .
\end{equation}
Note that
$$
f(\bv)-f(0)-\partial_{\bu} f(0) \bv=\int_{0}^{1}\left(\partial_{\bu} f(t \bv)-\partial_{\bu} f(0)\right) d t \bv .
$$
We define
\begin{equation}\lb{nl}
	N(\bu)=\int_{0}^{1}\left(\partial_{\bu} f(t \bv)-\partial_{\bu} f(0)\right) d t,
\end{equation}
as an $n \times n$ matrix-valued function of $\bv$. Note that $N(\bv) \bv \in \mathbb{R}^{n}$ for any $\bv \in \mathbb{R}^{n}$. Using \eqref{nl}, we rewrite \eqref{sys1} as
\begin{equation}\lb{sysv}
	\bv_{t}=L \bv+N(\bv) \bv.
\end{equation}
This is the semilinear equation for the perturbation that we will study. Throughout the rest of this section, we will always assume $k \geqslant\left[\frac{d+1}{2}\right]$ for $\mathcal{E}_{0}=$ $H^{k}\left(\mathbb{R}^{d}\right)$ and $\mathcal{E}_{\alpha}=H_{\alpha}^{k}(\mathbb{R}) \otimes H^{k}\left(\mathbb{R}^{d-1}\right)$.

\begin{proposition}\lb{prop31}
Assume that Hypotheses \ref{hy1}-\ref{h3} hold. Then the following is true:
(1) There exists $\alpha>0$ such that on the weighted space $\mathcal{E}_{\alpha}^{n}$, the spectrum of $\mathcal{L}_{\alpha}$ will be bounded away from the imaginary axis: $\sup \left\{\operatorname{Re} \lambda: \lambda \in \operatorname{Sp}\left(\mathcal{L}_{\alpha}\right)\right\}<-\nu$ for some $\nu>0$. Also, there exists $K>0$ such that
$$
\left\|e^{t \mathcal{L}_{\alpha}}\right\|_{\mathcal{E}_{\alpha}^{n}\rightarrow \cE_{\alpha}^n} \leqslant K e^{-\nu t} \quad \text { for all } \quad t \geqslant 0 .
$$
(2) On the unweighted space $\mathcal{E}_{0}^{n_{2}}$, we have $\sup \left\{\operatorname{Re} \lambda: \lambda \in \operatorname{Sp}\left(\mathcal{L}^{(2)}\right)\right\}<-\rho$ for some $\rho>0$, and there exists $K>0$ such that $\left\|e^{t \mathcal{L}^{(2)}}\right\|_{\mathcal{E}_{0}^{n_{2}} \rightarrow \mathcal{E}_{0}^{n_{2}}} \leqslant K e^{-\rho t}$ for all $t \geqslant 0$.
\end{proposition}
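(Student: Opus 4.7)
The plan is to deduce both assertions directly from Hypotheses \ref{hy1}--\ref{h3} combined with the spectral mapping property of the associated semigroups, following the template established for Proposition \ref{pro2.2.3} in the model case.

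For (1), Hypothesis \ref{h2} provides $\alpha>0$ with $\sup\{\Re\lambda:\lambda\in\Sp(\cL_\alpha)\}<0$ on $L^2_\alpha(\bbR)^n\otimes L^2(\bbR^{d-1})^n$, and Lemma \ref{lem3sa} transfers this spectral statement to $\cE_\alpha^n$. To obtain a quantitative gap $\nu>0$, I would invoke the tensor decomposition recorded in Remark \ref{spectp}: writing $\cL_\alpha=\cL_{1,\alpha}\otimes I+I\otimes\Delta_y$ and using $\Sp(\Delta_y)=(-\infty,0]$, one has $\Sp(\cL_\alpha)=\Sp(\cL_{1,\alpha})+(-\infty,0]$, so $\Sp(\cL_\alpha)$ fills the solid region to the left of the curve bounding $\Sp(\cL_{1,\alpha})$ from the right, which by Hypothesis \ref{h2} sits strictly in the open left half-plane. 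Any $\nu\in(0,-\sup\{\Re\lambda:\lambda\in\Sp(\cL_{1,\alpha})\})$ then works.

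The semigroup bound follows from the spectral mapping property. If all diagonal entries of $D$ are strictly positive, then $\cL_\alpha$ generates an analytic semigroup and the exponential bound is standard; in general $\cL_\alpha$ is still the generator of a strongly continuous semigroup, and the argument of \cite[Proposition 4.3]{GLS1} shows that the boundary of $\Sp(e^{t\cL_\alpha})$ is controlled by $\Sp(\cL_\alpha)$, yielding $\|e^{t\cL_\alpha}\|_{\cE_\alpha^n\to\cE_\alpha^n}\leq Ke^{-\nu t}$ for some $K>0$.

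For (2), Hypothesis \ref{h3}(2) directly delivers $\sup\{\Re\lambda:\lambda\in\Sp(\cL^{(2)})\}<0$ on $L^2(\bbR^d)^{n_2}$, and by the analog of the Fourier-conjugation argument used in Lemma \ref{lem3sa} the same bound persists on $\cE_0^{n_2}=H^k(\bbR^d)^{n_2}$. Fixing any $\rho>0$ less than $-\sup\{\Re\lambda:\lambda\in\Sp(\cL^{(2)})\}$ and applying the spectral mapping property once more, now to the generator $\cL^{(2)}$ (whose semigroup boundedness, when $D_2$ has vanishing entries, follows from a direct Fourier computation as in Lemma \ref{lem2.2.4}(3b)), one obtains $\|e^{t\cL^{(2)}}\|_{\cE_0^{n_2}\to\cE_0^{n_2}}\leq Ke^{-\rho t}$.

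The main technical obstacle is the spectral mapping step, which is delicate precisely when the diffusion matrices $D_1,D_2$ have vanishing diagonal entries and the corresponding semigroups are not analytic. I would resolve this exactly as in the model case with $\epsilon=0$: the constant-coefficient structure of $\cL_\alpha$ and $\cL^{(2)}$ makes their spectra explicit unions of algebraic curves parametrized by the Fourier variable $\xi\in\bbR^d$, and combining the tensor structure of Remark \ref{spectp} with an explicit symbol estimate for $e^{t\cL_\alpha}$ and $e^{t\cL^{(2)}}$ yields the desired exponential bound without requiring analyticity.
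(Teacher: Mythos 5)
Your proposal is correct and follows essentially the same route as the paper: the paper's own proof is a two-line citation of Hypotheses \ref{h2}--\ref{h3} together with Lemma \ref{lem3sa}, implicitly relying on the tensor-product decomposition of Remark \ref{spectp} and the spectral mapping property from \cite[Proposition 4.3]{GLS1} exactly as you spell out. Your version simply makes explicit the supporting steps (the quantitative gap $\nu$, the transfer from $L^2$ to $H^k$, and the semigroup estimate in the possibly non-analytic case) that the paper delegates to the model-case arguments of Proposition \ref{pro2.2.3} and Lemma \ref{lem2.2.4}.
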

\begin{proof}
	Statement (1) holds by Hypothesis \ref{h2} and Lemma \ref{lem3sa}. Statement (2) follows from Hypothesis \ref{h3} and Lemma \ref{lem3sa}.
\end{proof}

Proposition \ref{prop31} above gives the spectral stability of the linear operator in the semilinear system \eqref{sysv}. We next estimate the Locally Lipschitz property for the nonlinear terms $N(\bv)\bv$ as in \eqref{sysv} in the weighted and unweighted norms.
\begin{proposition}
	Assume $k \geqslant \frac{d+1}{2}$ and let $\mathcal{E}_{0}=H^{k}\left(\mathbb{R}^{d}\right)$. Given $f \in C^{k+3}\left(\mathbb{R}^{n} ; \mathbb{R}^{n}\right)$, consider the nonlinearity $N(\bv)$ defined in \eqref{nl}. Then we have:
	\begin{itemize}
		\item [(1)] If $\bv \in \mathcal{E}^{n}$, then $N(\bv) \bv \in \mathcal{E}_{\alpha}^{n}$, and on any bounded neighborhood of the form $\left\{\bv:\|\bv\|_{\mathcal{E}} \leqslant K\right\}$ there is a constant $C_{K}>0$ such that $\|N(\bv) \bv\|_{\alpha} \leqslant C_{K}\|\bv\|_{0}\|\bv\|_{\alpha}$.
		\item[(2)] If $\bv \in \mathcal{E}^{n}$, then $N(\bv) \bv \in \mathcal{E}_{0}^{n}$, and on any bounded neighborhood of the form $\left\{\bv:\|\bv\|_{0} \leqslant K\right\}$ there is a constant $C_{K}>0$ such that $\|N(\bv) \bv\|_{0} \leqslant C_{K}\|\bv\|_{0}^{2}$.
		\item[(3)] The formula $\bv \mapsto N(\bv) \bv$ defines a mapping from $\mathcal{E}^{n}$ to $\mathcal{E}^{n}$ that is locally Lipschitz on any bounded neighborhood of the form $\left\{\bv:\|\bv\|_{\mathcal{E}} \leqslant K\right\}$ in $\mathcal{E}^{n}$.
	\end{itemize}
\end{proposition}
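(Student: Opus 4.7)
My plan is to follow the same scheme as the model-case proof of Proposition \ref{lip3}, but now exploiting the $C^{k+3}$ regularity of $f$ provided by Hypothesis \ref{hy1}(b). The first step is to reveal the bilinear structure hidden in $N(\bv)\bv$. Since $\partial_\bu f \in C^{k+2}(\mathbb{R}^n;\mathbb{R}^{n\times n})$, applying the fundamental theorem of calculus twice gives
\begin{equation*}
N(\bv)=\int_0^1\bigl(\partial_\bu f(t\bv)-\partial_\bu f(0)\bigr)\,dt=\int_0^1\int_0^1 t\,\partial_\bu^2 f(st\bv)[\bv]\,ds\,dt,
\end{equation*}
so each entry of the matrix $N(\bv)$ is a finite sum $\sum_\ell v_\ell\, a_\ell(\bv)$ with $a_\ell\in C^{k+1}(\mathbb{R}^n;\mathbb{R})$. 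Consequently the components of $N(\bv)\bv$ are finite sums
\begin{equation*}
(N(\bv)\bv)_i=\sum_{j,\ell}m_{ij\ell}(\bv)\,v_jv_\ell,\qquad m_{ij\ell}\in C^{k+1}(\mathbb{R}^n;\mathbb{R}).
\end{equation*}

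With this factorization in hand I would invoke the (straightforward) multi-variable extension of Lemma \ref{glya} and Corollary \ref{cor2.3.3}: their proofs rely only on the Sobolev algebra property (Lemma \ref{soblev}) and on the local Lipschitz continuity of Nemytskij composition with a $C^{k+1}$ function, both of which carry over verbatim when $m(\cdot)$ depends on all $n$ components of $\bv$ rather than one or two scalar arguments. On any ball $\|\bv\|_{\mathcal{E}}\leq K$ this yields the uniform bound $\|m_{ij\ell}(\bv)\|_0\leq C_K$; combined with Lemma \ref{soblev}(1)-(2) this gives
\begin{equation*}
\|m_{ij\ell}(\bv)v_jv_\ell\|_0\leq C_K\|\bv\|_0^2,\qquad \|m_{ij\ell}(\bv)v_jv_\ell\|_\alpha\leq C_K\|\bv\|_0\|\bv\|_\alpha.
\end{equation*}
Summing over the finitely many indices $i,j,\ell$ produces (1) and (2); for (2) only control of $\|\bv\|_0$ is needed, since the composition $m_{ij\ell}(\bv)$ is already bounded in $\mathcal{E}_0$ by its Sobolev-embedded $L^\infty$ norm on the ball $\{|\bv|\leq\mathrm{const}\cdot K\}$, which is available because $k\geq\bigl[\frac{d+1}{2}\bigr]$.

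For (3), I would use the standard telescoping
\begin{equation*}
N(\bv)\bv-N(\bw)\bw=N(\bv)(\bv-\bw)+\bigl(N(\bv)-N(\bw)\bigr)\bw,
\end{equation*}
together with $N(\bv)-N(\bw)=\int_0^1\partial_{\bv}N(\bw+\tau(\bv-\bw))[\bv-\bw]\,d\tau$, to express each difference again as a product of a $C^k$ Nemytskij factor with a linear factor in $\bv-\bw$. Applying Lemma \ref{soblev} and the Nemytskij Lipschitz estimate termwise produces a bound of the shape $\|N(\bv)\bv-N(\bw)\bw\|_{\mathcal{E}}\leq C_K(\|\bv\|_{\mathcal{E}}+\|\bw\|_{\mathcal{E}})\|\bv-\bw\|_{\mathcal{E}}$ on the ball $\{\|\bv\|_{\mathcal{E}},\|\bw\|_{\mathcal{E}}\leq K\}$, which is precisely the locally Lipschitz property required.

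The only non-routine ingredient I anticipate is verifying that the multi-variable Nemytskij map $\bv\mapsto m(\bv)$, for $m\in C^{k+1}(\mathbb{R}^n)$, is locally Lipschitz from $\mathcal{E}_0^n$ to $\mathcal{E}_0$ and from $\mathcal{E}^n$ to $\mathcal{E}$. This is essentially a Faà di Bruno estimate combined with the Sobolev algebra, mirroring the scalar version proved in \cite{GLY} that underlies Lemma \ref{glya}; tracking the additional arguments and the cross derivatives is the main bookkeeping task, after which the three assertions of the proposition follow at once.
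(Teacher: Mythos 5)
Your proposal is correct and follows essentially the same route as the paper: the same second application of the fundamental theorem of calculus to write $N(\bv)=\int_0^1\bigl(\int_0^1 \partial^2_{\bu}f(st\bv)\,ds\bigr)t\bv\,dt$, the same componentwise appeal to Lemma \ref{glya} for the Nemytskij factors, and the same use of the Sobolev algebra property (Lemma \ref{soblev}) to obtain the $\|\cdot\|_0$ and $\|\cdot\|_\alpha$ bounds and then the $\mathcal{E}$-bound. Your telescoping argument for assertion (3) is in fact spelled out in more detail than in the paper, which only records the quadratic estimates and the local Lipschitz property of $\bv\mapsto N(\bv)$; the one cosmetic caveat is that the quantity $\|m_{ij\ell}(\bv)\|_0$ should be read as a multiplier bound (as in Lemma \ref{glya}) rather than an $H^k(\mathbb{R}^d)$ norm, since $m_{ij\ell}(0)$ need not vanish.
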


Proof. We will refer to Lemma \ref{glya} for the components of $N(\bv) \bv$ by dropping $q$ from the lemma.
Note that
$$
N(\bv)=\int_{0}^{1}\left(\partial_{u} f(t \bv)-\partial_{u} f(0)\right) d t=\int_{0}^{1}\left(\int_{0}^{1} \partial_{u^{2}} f(s t \bv) d s\right) t v d t .
$$
By Lemma \ref{glya} applied to the components of the vector under the integral, the mapping $\bv \mapsto N(\bv)$ is locally Lipschitz on sets of the form $\left\{\bv:\|\bv\|_{0} \leqslant K\right\}$ and satisfies
$$
\|N(\bv)\|_{0} \leqslant C_{K}\|\bv\|_{0},
$$
Thus by Lemma 3.6(1) and (3.33), we conclude that the nonlinearity $N(\bv) \bv$ satisfies
$$
\|N(\bv) \bv\|_{0} \leqslant\|N(\bv)\|_{0}\|\bv\|_{0} \leqslant C_{K}\|\bv\|_{0}\|\bv\|_{0},
$$
while by Lemma 3.6(2) and (3.33), it satisfies
$$
\|N(\bv) \bv\|_{\alpha}=\left\|\gamma_{\alpha} N(\bv) \bv\right\|_{0} \leqslant\|N(\bv)\|_{0}\left\|\gamma_{\alpha} \bv\right\|_{0} \leqslant C_{K}\|\bv\|_{0}\|\bv\|_{\alpha},
$$
thus proving (1) and (2). Next, we use the definition of $\|\|_{\mathcal{E}}$ and infer
$$
\begin{aligned}
	\|N(\bv)\bv\|_{\mathcal{E}} &=\max \left\{\|N(\bv) \bv\|_{0},\|N(\bv) \bv\|_{\alpha}\right\} \\
	& \leqslant \max \left\{C_{K}\|\bv\|_{0}\|\bv\|_{0}, C_{K}\|\bv\|_{0}\|\bv\|_{\alpha}\right\} \\
	& \leqslant C_{K}\|\bv\|_{\mathcal{E}}\|\bv\|_{\mathcal{E}}.
\end{aligned}
$$

With the information that we have obtained now, the spectrum of the linear operator of system \eqref{sysv} is stable in the weighted space and the nonlinear terms of system \eqref{sysv} under the weighted norm satisfy certain locally Lipchitz conditions. Next we can proceed as in the proof of Propostions \ref{prop2.4.1}-\ref{prop2.4.3} as in Section \ref{sec2}, and finally use the similar Bootstrap arguments as in the proof of Theorem \ref{th2.1} to obtain the following stability result:

\begin{theorem}\lb{th2}
	Given initial value $\bv^{0} \in \mathcal{E}^{n}$, let $\bv(t)=v\left(t, \bv^{0}\right)$ be the solution of \eqref{sysv} in $\mathcal{E}^{n}$ with $\bv(0)=\bv^{0}$. Let $k \geqslant \frac{d+1}{2}, \mathcal{E}_{0}=H^{k}\left(\mathbb{R}^{d}\right)$ and $\mathcal{E}_{\alpha}=H_{\alpha}^{k}(\mathbb{R}) \otimes H^{k}\left(\mathbb{R}^{d-1}\right)$. Assume Hypotheses \ref{hy1}-\ref{h3}. Then there exist constants $C>0, \nu>0$ and a small $\delta_{0}>0$ such that for each $0<\delta<\delta_{0}$, we can find $\eta>0$ such that if $\left\|\bv^{0}\right\|_{\mathcal{E}} \leqslant \eta$, then the following is true for all $t>0$ :
	\begin{itemize}
		\item $\bv(t)$ is defined in $\mathcal{E}^{n}$;
		\item $\|\bv(t)\|_{\mathcal{E}} \leqslant \delta$;
		\item $\|\bv(t)\|_{\alpha} \leqslant C e^{-\nu t}\left\|\bv^{0}\right\|_{\alpha}$;
		\item $\left\|\bv_{1}(t)\right\|_{0} \leqslant C\left\|\bv^{0}\right\|_{\mathcal{E}}$;
		\item $\left\|\bv_{2}(t)\right\|_{0} \leqslant C e^{-\nu t}\left\|\bv^{0}\right\|_{\mathcal{E}}$.
	\end{itemize}
\end{theorem}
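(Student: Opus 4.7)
The plan is to follow the three-step blueprint used to prove Theorem \ref{th2.1} in Section \ref{sec2}, replacing each model-case ingredient by its general counterpart now that Proposition \ref{prop31} supplies the spectral information and the preceding proposition supplies the nonlinear bounds. Specifically, I would first establish three propositions that are the exact analogs of Propositions \ref{prop2.4.1}, \ref{prop2.4.2}, \ref{prop2.4.3}, and then invoke the abstract bootstrap principle (Lemma \ref{ABP}) verbatim as in the proof of Theorem \ref{th2.1}.

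For \emph{local existence and continuity} in $\cE^n$, since $\cL_{\cE}$ generates a semigroup on $\cE^n$ (by restricting $\cL_\alpha$ and $\cL$ to $\cE = \cE_0 \cap \cE_\alpha$) and the nonlinearity $N(\bv)\bv$ is locally Lipschitz on bounded sets of $\cE^n$ (by the nonlinearity proposition), standard semilinear theory (e.g.\ \cite[Theorem 6.1.4]{pazy}) provides a unique mild solution $\bv(t)$ with continuous dependence on $\bv^0$, and hence for every $\delta>0$ and $\gamma\in(0,\delta)$ there exists $T(\gamma,\delta)>0$ such that $\|\bv^0\|_{\cE}\le \gamma$ implies $\|\bv(t)\|_{\cE}\le \delta$ for $t\in[0,T(\gamma,\delta))$. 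For the \emph{exponential decay in the weighted norm}, I would write the mild solution formula $\bv(t)=e^{t\cL_\alpha}\bv^0+\int_0^t e^{(t-s)\cL_\alpha}N(\bv(s))\bv(s)\,ds$ in $\cE_\alpha^n$, fix $\bar\nu>\nu>0$ with $\sup\{\operatorname{Re}\lambda:\lambda\in\operatorname{Sp}(\cL_\alpha)\}<-\bar\nu$ so that $\|e^{t\cL_\alpha}\|_{\cE_\alpha^n\to\cE_\alpha^n}\le Ke^{-\bar\nu t}$ by Proposition \ref{prop31}(1), and use the bound $\|N(\bv)\bv\|_\alpha\le C_K\|\bv\|_0\|\bv\|_\alpha$ from the nonlinearity proposition. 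On the interval where $\|\bv(s)\|_{\cE}\le\delta$, this yields $e^{\bar\nu t}\|\bv(t)\|_\alpha \le K\|\bv^0\|_\alpha + KC_K\delta\int_0^t e^{\bar\nu s}\|\bv(s)\|_\alpha\,ds$, and Gronwall then gives $\|\bv(t)\|_\alpha\le K_1 e^{-\nu t}\|\bv^0\|_\alpha$ provided $\delta$ is small enough that $KC_K\delta<\bar\nu-\nu$.

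For the \emph{unweighted estimates}, this is the place where Hypothesis \ref{hy1}(a) does essential work. Because $f(\bu_1,0)=(A_1\bu_1,0)^T$, the perturbation system \eqref{sysv} splits as
\begin{align*}
\bv_{1t}&=\cL^{(1)}\bv_1+\partial_{\bu_2}f_1(0,0)\bv_2+H_1(\bv),\\
\bv_{2t}&=\cL^{(2)}\bv_2+H_2(\bv),
\end{align*}
where $H_1(\bv)$ and $H_2(\bv)$ are at least quadratic and, crucially, each contains a factor of $\bv_2$ (since $\tilde f_2$ and the nonlinear remainder of $\tilde f_1$ do). Thus on $\|\bv\|_{\cE}\le\delta$ the nonlinearity proposition together with Lemma \ref{soblev} gives $\|H_i(\bv)\|_0\le C_\delta\|\bv\|_0\|\bv_2\|_0$ for $i=1,2$. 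Choosing $\bar\rho>\rho>0$ with $\sup\{\operatorname{Re}\lambda:\lambda\in\operatorname{Sp}(\cL^{(2)})\}<-\bar\rho$, the Duhamel formula for $\bv_2$ with $\|e^{t\cL^{(2)}}\|\le K_2 e^{-\bar\rho t}$ and the usual Gronwall argument produces $\|\bv_2(t)\|_0\le K_2\|\bv^0\|_{\cE}e^{-\rho t}$ for $\delta$ small. Then for $\bv_1$ we use Hypothesis \ref{h3}(1) to bound $\|e^{t\cL^{(1)}}\|\le K_3$, and the Duhamel formula
\[
\bv_1(t)=e^{t\cL^{(1)}}\bv_1^0+\int_0^t e^{(t-s)\cL^{(1)}}\bigl(\partial_{\bu_2}f_1(0,0)\bv_2(s)+H_1(\bv(s))\bigr)\,ds
\]
combined with the exponential decay of $\|\bv_2(s)\|_0$ makes the time integral uniformly bounded, yielding $\|\bv_1(t)\|_0\le C_1\|\bv^0\|_{\cE}$.

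Once these three propositions are in hand, the conclusion follows exactly as in the proof of Theorem \ref{th2.1}: the statements $H(T)$ (local existence with $\|\bv(t)\|_{\cE}\le\delta$) and $C(T)$ (items (3)--(5) with a constant $C$ independent of $T$) are set up so that Lemma \ref{ABP} applies, after choosing $\eta=\gamma/C$ with $C>\max\{1,K_1,C_1\}$ so that the weighted--unweighted control at the endpoint of each local interval feeds the next iteration without inflating the norm. The main obstacle I foresee is purely bookkeeping: one must track carefully that in the general setting the triangular structure from Hypothesis \ref{hy1}(a) is used both to make $\bv_2$ appear as a factor in every $H_i$ (so the $\bv_2$-equation self-closes with a quadratic small-in-$\delta$ term rather than an $O(\|\bv\|_0)$ linear perturbation) and to ensure that the forcing $\partial_{\bu_2}f_1(0,0)\bv_2$ of the $\bv_1$-equation decays exponentially in $t$; without that factorization the bootstrap for the unweighted norm of $\bv_1$ would not close.
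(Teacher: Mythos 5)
Your proposal is correct and follows essentially the same route as the paper: the authors state that the proof of Theorem \ref{th2} is identical to that of Theorem \ref{th2.1}, i.e.\ one establishes the general-system analogs of Propositions \ref{prop2.4.1}--\ref{prop2.4.3} (local existence in $\cE^n$, weighted exponential decay via Duhamel and Gronwall using Proposition \ref{prop31}(1), and the split unweighted estimates for $\bv_1,\bv_2$ using the triangular structure from Hypothesis \ref{hy1}(a) and the semigroup bounds from Hypothesis \ref{h3}) and then closes the argument with the abstract bootstrap principle of Lemma \ref{ABP}. Your explicit tracking of where the factor of $\bv_2$ in the nonlinearity is needed to make the $\bv_2$-equation self-close and the $\bv_1$-forcing integrable is exactly the mechanism the paper relies on.
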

The proof is identical to the proof of Theorem \ref{th2.1}, so we will not restate it here.

\section{Conclusion and Future Work}
The study of traveling waves and their stability is a very meaningful topic, where stability of traveling wave solutions is one of the important properties in the qualitative analysis of traveling wave solutions of nonlinear equations. In this paper, we study a class of reaction diffusion equations that are characterized by a special product-triangle structure, in particular, the linear operator obtained after linearization of the system with respect to the traveling front has a triangular structure \eqref{trs}, and the nonlinear reaction terms have product structure. Some other systems that possess this type of structure include, for example, the exothermic-endothermic chemical reactions:
$$
\begin{aligned}
	&\partial_{t} y_{1}=\partial_{x x} y_{1}+y_{2} f_{2}\left(y_{1}\right)-\sigma y_{3} f_{3}\left(y_{1}\right) \\
	&\partial_{t} y_{2}=d_{2} \partial_{x x} y_{2}-y_{2} f_{2}\left(y_{1}\right) \\
	&\partial_{t} y_{3}=d_{3} \partial_{x x} y_{3}-\tau y_{3} f_{3}\left(y_{1}\right)
\end{aligned}
$$
Here $y_{1}$ is the temperature, $y_{2}$ is the quantity of an exothermic reactant, and $y_{3}$ is the quantity of an endothermic reactant. The parameters $\sigma$ and $\tau$ are positive, and there are positive constants $a_{i}$ and $b_{i}$ such that $f_{i}(u)=a_{i} e^{-\frac{b_{i}}{u}}$ for $u>0$ and $f_{i}(u)=0$ for $u \leqslant 0$. And the gasless combustion
$$
\partial_{t} u=\partial_{x x} u+v g(u), \quad \partial_{t} v=-\beta v g(u),
$$
where $g(u)=e^{-\frac{1}{u}}$ if $u>0$ and $g(u)=0$ if $u \leqslant 0$. In this system, $u$ is the temperature, $v$ is the concentration of unburned fuel, $g$ is the unit reaction rate, and $\beta>0$ is a constant parameter. 

For the reaction-diffusion system with this structure, we show that if the spectrum of the linear operator projected in one-dimensional space is only touching the imaginary axis, then we can use a weight function and weighted space to shift the spectrum of the linear operator to the left to obtain the spectral stability of the operator, on the other hand, we show that the nonlinear reaction term with the product form has the local lipchitz property in the weighted space we constructed. Combining these facts we can obtain the stability of the steady-state solution of the planar front.

However, there are still more problems in this subject that remain unsolved for the time being. For example, the linear operator obtained by linearizing the system about the planar front will have isolated singularities, and each of these isolated singularities extends an infinite semiline in the multidimensional space, as we discussed in the remark \ref{spectp}, which leads us to presuppose in \cite{GLY} that the diffusion coefficients of different variables of the system are identical, the reader can refer to \cite[Proposition 3.1]{GLY} if there is interest in the discussion.

Further studies of this subject may include, using the triangular structure, trying to control part of the variables followed by other parts; or, whether there are approaches other than spectral projection to study reaction diffusion systems in multidimensional space.


\bibliographystyle{amsplain}

\begin{thebibliography}{10}
	\bibitem{AF} R.\ A.\ Adams and J.\ F.\ Fournier, Sobolev spaces (2 ed.), Academic Press, New York, 2003.
	
	\bibitem{BGM} M.\ Bakhshi, A.\ Ghazaryan, V.\ Manukian, et al.  Traveling wave solutions in a model for social outbursts in a tension‐inhibitive regime, {\em Stud. Appl. Math.}, 2021.
	
	\bibitem{BFZ} B.\ Barker, H.\ Freistuhler and K.\ Zumbrun,  Convex entropy, Hopf bifurcation, and viscous and inviscid shock stability, {\em Arch. Ration. Mech. An.}, vol. 217, no. 1, pp. 309--372, 2015. 
	
	\bibitem{Brig} R.\ J.\ Briggs,  Electron-stream interaction with plasmas, Monograph MIT, Cambridge USA, vol. 29, 1964. 
	
	\bibitem{CL} D.\ Cramer, Y.\ Latushkin, Gearhart-Prüss theorem in stability for wave equations: a survey, Evolution equations. CRC Press, 2019: 105-119. 
	
	\bibitem{EngelNagel} K.-J. \ Engel and R. \ Nagel, One-parameter semigroups for linear evolution equations, Springer, New York, 1999.
	
	\bibitem{Fife2002} P.\ Fife, Pattern formation in gradient systems. In: {\em Handbook of dynamical systems}, Vol. 2, pp. 677--722, North-Holland, Amsterdam, 2002.
	
	\bibitem{G} A.\ Ghazaryan, Nonlinear stability of high Lewis number combustion fronts, {\em Indiana Univ. Math.\  J.,} {\bf 58} (2009), 181--212.
	
	\bibitem{GLS1} A.\ Ghazaryan, Y. Latushkin, S.\ Schecter and A. de Souza, Stability of gasless combustion fronts in one-dimensional
	solids, {\em Arch. Ration. \ Mech.\ Anal.}  {\bf 198} (2010) 981 -- 1030.
	
	\bibitem{GLS2} A.\ Ghazaryan, Y. Latushkin and S.\ Schecter, Stability of traveling waves for a class of reaction-diffusion systems that arise in chemical reaction models, {\em SIAM J.\ Math. Anal.} {\bf 42} (2010) 2434 -- 2472.
	
	\bibitem{GLS3} A.\ Ghazaryan, Y. Latushkin and S.\ Schecter. Stability of traveling waves for degenerate systems of reaction diffusion equations, {\em Indiana University Math.\ J.}  {\bf 60} (2011), 443 -- 472.
	
	\bibitem{GLSR} A.\ Ghazaryan, Y.\ Latushkin and S.\ Schecter, Stability of traveling waves in partly hyperbolic systems, {\em Math.\ Model.\ Nat.\ Phenom.} {\bf 8} (2013) 32 -- 48.
	
	\bibitem{GLY} A.\ Ghazaryan, Y.\ Latushkin and X.\ Yang, Stability of a planar front in a class of reaction-diffusion systems, SIAM J. Math. Anal., {\bf 50} (2018), 5569-5615.
	
	\bibitem{Henry1981}
	D. Henry. Geometric theory of semilinear parabolic equations. Lecture Notes in Mathematics vol. 840. Springer, New York, 1981.
	
	\bibitem{Jones} C.K.R.T. Jones, Geometric Singular Perturbation Theory, C.I.M.E. Lectures, Montecatini Terme, Lecture Notes in Math., Vol. 1609, Springer, Heidelberg (1995).
	
	\bibitem{Kapitula1} T.\ Kapitula, On the nonlinear stability of plane waves for the Ginzburg-Landau equation, {\em Comm. Pure Appl. Math.}{\bf 47}(1994), 831-841.
	
	\bibitem{kp} T.\ Kapitula, K.\ Promislow. An introduction to spectral and dynamical stability. Springer, New York, 2014.
	
	\bibitem{KV} B. Kazmierczak and V. Volpert. Travelling waves in partially degenerate reaction-diffusion systems. {\em Math. Model. Nat. Phenom}. {\bf 2}  (2007), 106--125.
	
	\bibitem{KSS}M. Krupa, B. Sandstede and P. Szmolyan,  Fast and slow waves in the FitzHugh-Nagumo equation, {\em J. Differential Equations}. {\bf 133}(1997), 49-97.
	
	\bibitem{LSY} Y.\ Latushkin, R.\ Schnaubelt and X.\ Yang,  Stable foliations near a traveling front for reaction diffusion systems, {\em Discrete Cont. Dyn.-B}, {\bf 22} (2017), 3145-3165.
	
	\bibitem{LMNT} V.\ Ledoux, S.\ Malham, J.\ Niesen and V.\ Th\"ummler, Computing stability of multidimensional traveling waves. {\em SIAM J. Appl. Dyn. Syst.} {\bf 8} (2009), 480 -- 507.
	
	\bibitem{liwu} Y. Li and Y. Wu. Stability of traveling front solutions with algebraic spatial decay for some auto-catalytic chemical reaction systems. {\em SIAM J. Math. Anal.}, {\bf 44} (2012),  1474--1521.
	
	\bibitem{pazy} P.\ Pazy. Semigroups of Linear Operators and Applications to Partial Differential Equations, Springer-Verlag, New York, 1983.
	
	\bibitem{pw94} R. L.  Pego,  M. I. Weinstein. Asymptotic stability of solitary waves. {\em Comm. Math. Phys.}, 164 (1994), 305--349. 
	
	\bibitem{RS1} M. Reed and B. Simon, Methods of modern mathematical physics, Vol. I, Analysis of operators, Academic Press, New York, 1978.
	
	\bibitem{ReedSimon4} M. Reed and B. Simon, Methods of modern mathematical physics, Vol. IV, Analysis of operators, Academic Press, New York, 1978.
	
	\bibitem{rott3} J. Rottmann-Matthes. Stability of parabolic-hyperbolic traveling waves. {\em Dynamics of Part. Diff. Eqns}.\ {\bf 9} (2012)  29--62.
	
	\bibitem{SS01} B.\ Sandstede and A.\ Scheel. Essential instabilities of fronts: bifurcation, and bifurcation failure. {\em Dyn. Syst.}, {\bf 16} (2001) 1, 1–28.
	
	\bibitem{Sa02} B.\ Sandstede.  Stability of travelling waves, pp. 983 -- 1055, In: 
	Handbook of dynamical systems, {V}ol. 2 (B. Fiedler, Ed.), North-Holland, Elsevier, Amsterdam, 2002.
	
	\bibitem{Sell} G. \ Sell and Y. \ You, Dynamics of Evolutionary Equations, {\em Appl. Math. Sci.}, 143, Springer-Verlag, New York, 2002.
	
	\bibitem{Tao} T. \ Tao, Nonlinear dispersive equations: local and global analysis, CBMS regional series in mathematics, AMS, Providence, 2006.
	
	\bibitem{TZKS} J.-C. Tsai, W. Zhang, V. Kirk and  J. Sneyd. Traveling waves in a simplified model of calcium dynamics.  {\em SIAM J. Appl. Dyn. Systems},  {\bf 11} (2012), 1149--1199.
	
	\bibitem{vv02} F.\ Varas,  J.\ Vega. Linear stability of a
	plane front in solid combustion at large heat of reaction. {\em SIAM
		J. Appl. Math.}, 62 (2002), 1810--1822.
	
	\bibitem{Xin2000} J.\ Xin, Front propagation in heterogeneous media. {\em SIAM Rev. }, {\bf 42} (2000), 161--230.
	
	
\end{thebibliography}


\end{document}